\newcommand*{\rom}[1]{\expandafter\@slowromancap\romannumeral #1@}
\numberwithin{equation}{section}
\renewcommand{\Re}{\text{Re}}
\newcommand{\ten}{\otimes}
\newcommand{\pl}{\hspace{.1cm}}
\newcommand{\ran}{\rangle}
\newcommand{\lan}{\langle}
\newcommand{\hten}{\widehat{\ten}}
\newcommand{\al}{\alpha}
\newcommand{\si}{\sigma}
\newcommand{\la}{\lambda}
\newcommand{\id}{\iota_{\infty,2}^n}
\renewcommand{\id}{{\operatorname{id}}}
\renewcommand{\L}{{\mathcal L}}
\newcommand{\F}{{\mathcal F}}
\newcommand{\M}{{\mathcal M}}
\newcommand{\K}{{\mathcal K}}
\newcommand{\N}{{\mathcal N}}
\newcommand{\norm}[2]{\parallel \! #1 \! \parallel_{#2}}
\newtheorem{lemma}{Lemma}[section]
\newtheorem{prop}[lemma]{Proposition}
\newtheorem{theorem}[lemma]{Theorem}
\newtheorem{cor}[lemma]{Corollary}
\newtheorem{rem}[lemma]{Remark}
\newcommand{\re}{\begin{rem}\rm}
\newcommand{\mar}{\end{rem}}
\newcommand{\bra}[1]{\langle{#1}|}
\newcommand{\ket}[1]{|{#1}\rangle}
\newcommand{\prf}{\begin{proof}[\bf Proof:]}
\newcommand{\xspace}{\hbox{\kern-2.5pt}}
\newcommand{\tr}{\tau}
\newtheorem{defi}[lemma]{Definition}
\begin{document}
\title{Quantum Majorization on Semifinite von Neumann algebras}
\author[P.~Ganesan]{Priyanga Ganesan\textsuperscript{1}}
\thanks{\textsuperscript{1}Department of Mathematics, Texas A\&M University, USA}
\email{priyanga.g@tamu.edu}
\author[L.~Gao]{Li Gao\textsuperscript{1}}
\email{ligao@math.tamu.edu}
\author[S.~K. ~Pandey]{Satish K. Pandey\textsuperscript{2}}
\thanks{\textsuperscript{2}Faculty of Mathematics, Technion-Israel Institute of Technology, Haifa 3200003, Israel}
\email{satishpandey@campus.technion.ac.il}
\author[S.~Plosker]{Sarah Plosker\textsuperscript{3}}
\thanks{\textsuperscript{3}Department of Mathematics and Computer Science, Brandon University,
Brandon, MB R7A 6A9, Canada}
\email{ploskers@brandonu.ca}

\subjclass[2010]{Primary 46L07; Secondary 47L07, %47L25,
47L90, 91A17.}

\begin{abstract}We extend Gour \emph{et al}'s characterization of quantum majorization via conditional min-entropy to the context of semifinite von Neumann algebras. Our method relies on a connection between conditional min-entropy and operator space projective tensor norm for injective von Neumann algebras. This approach also connects the tracial Hahn-Banach theorem of Helton, Klep
and McCullough to noncommutative vector-valued $L_1$-space.
\end{abstract}
\maketitle
\section{Introduction}

Majorization is a fundamental tool introduced by Hardy, Littlewood, and Polya \cite{hardy} that finds application in various fields \cite{marshall}. Among the different motivations for majorization, the core idea is a notion of ``disorder''. For example, a probability distribution is majorized by another if it is less deviated from the
uniform distribution. Recently, Gour, Jennings, Buscemi, Duan, and Marvian in \cite{gour18} use the concept of ``quantum majorization'' to accommodate the ordering of states and processes in quantum mechanical systems.

Let $H$ be a finite dimensional Hilbert space and $B(H)$ be space of the bounded operators acting on $H$. A density operator $\rho\in B(H)$  (called a  state on the quantum system $H$ in the quantum information theory literature) is positive and has  trace $1$ . The process between quantum systems is modeled by completely positive trace preserving maps (also called quantum channels) which map density operators to density operators. For two bipartite density operators $\rho$ and $\si$ on the tensor product Hilbert space $H_A\ten H_B$, $\si$ is said to be quantum majorized by $\rho$ if there exists a linear completely positive trace preserving (CPTP) map  $\Phi:B(H_B)\to B(H_B)$ such that $\si=id\ten \Phi(\rho)$. This concept has been studied in different contexts under various guises \cite{Shmaya, Chefles, buscemi, bds, jencovachannel}. Intuitively, quantum majorization describes the disorder observed from the $B$ system. This can be witnessed from the data processing inequality of conditional entropy,
\[H(A|B)_{\rho}\le H(A|B)_{id\ten \Phi(\rho)}=H(A|B)_{\si}\pl,\]where $H(A|B):=H(\rho)-H(\tr\ten \id(\rho))$ and $H(\rho)=-\tr(\rho\log \rho)$ is the von Neumann entropy. The conditional entropy $H(A|B)_\rho$ describes the uncertainty of the bipartite density operator $\rho$ given its information on the $B$ system \cite{winter}. The data processing inequality says such uncertainty is monotone non-decreasing under quantum majorization. As a converse to data processing inequality, Gour and his coauthors proved the following characterization of quantum majorization using conditional min-entropy $H_{min}(A|B)$, defined as
\begin{align}H_{min}(A|B)_{\rho}=&-\log \inf\{ \tr(\omega) | \rho \le \la 1\ten \omega\pl \text{for some positive} \pl \omega\in B(H_B)\}. \label{def}
\end{align}
\noindent{\bf Theorem (\cite{gour18}).} \emph{Let $H_A,H_B$ be finite dimensional Hilbert spaces. For two bipartite density operators  $\rho$ and $\si$, $\si$ is quantum majored by $\rho$ if and only if for all finite dimensional $H_A'$ and all CPTP maps $\Psi:B(H_A)\to B(H_{A'})$ ,
\begin{align}H_{min}(A'|B)_{\Psi\ten \operatorname{id}(\rho)}\le H_{min}(A'|B)_{\Psi\ten \operatorname{id}(\si)}\pl.\label{eqv}\end{align}}
$H_{min}(A|B)$ is analogy of $H(A|B)$ as the R\'enyi $p$-version at $p=\infty$ \cite{muller} and it connects to $H(A|B)$ by the quantum version of asymptotic equipartition
property \cite{tomamichel}. The ``only if'' direction in the above theorem follows from the data processing inequality of $H_{min}$, which is indeed self-evident from its definition \eqref{def}. The other direction states
that quantum majorization is actually determined by the data processing inequality of $H_{min}$.
In \cite{gour18}, the above theorem has been used to characterize quantum process under group symmetry and thermodynamic condition. It has further extensions from bipartite states to bipartite quantum channels \cite{gour19}.

In this work, we revisit Gour \emph{et al}'s theorem from a functional analytic perspective. Our starting point is the observation that the conditional min-entropy corresponds to the operator space tensor norm \begin{align}H_{min}(A|B)_\rho=-\log \norm{\rho}{S_1(H_B)\hten B(H_A)}\label{ana}\end{align} where $S_1(H_B)$ is the set of  trace class operators on $H_B$ and $S_1(H_B)\hten B(H_A)$ is the operator space projective tensor product.  This correspondence is based on an factorization expression for the norm of $S_1(H_B)\hten B(H_A)$ that Pisier used in \cite{pisier93} to define noncommutative vector-valued $L_p$ space. On the other hand, it is known \cite{ER00,PB} that the dual space of $S_1(H_B)\hten B(H_A)$ is the completely bounded maps $CB(B(H_A),B(H_B))$, where quantum channels correspond to unital completely positive maps by taking adjoints. From this perspective, $H_{min}$ is the dual of CB norm with respect to quantum channels and Gour \emph{et al}'s theorem is essentially a Hahn-Banach separation theorem. Using this approach, we prove the following characterization of quantum majorization using projective tensor norm which extend Gour \emph{et al}'s results to the setting of tracial von Neumann algebra. We consider two semifinite von Neumann algebras $\M$ and $\N$ equipped with normal faithful semi-finite traces $\tau_\M$ (resp. $\tau_\N$). We denote  $L_1(\M)$ (resp. $L_1(\N)$) as the space of $1$-integrable operators with respect to $\tau_\M$ (resp. $\tau_\N$). Our main theorem is\newpage
\begin{theorem}[c.f. Theorem \ref{von}]
Let $\M$ and $\N$ be two semifinite von Neumann algebras. Suppose $\M$ is injective. Then for two density operators $\rho,\si \in L_1(\M\overline{\ten} \N)$, there exists a CPTP map $\Phi:L_1(\M)\to L_1(\M)$ such that $\Phi\ten \operatorname{id}(\rho)= \si$ if and only for any projection $e\in \M$ with $\tau_\M(e)<\infty$ and for any CPTP map $\Psi:L_1(\N)\to L_1(e\M e^{op})\cap e\M e^{op}$,
    \[ \norm{\operatorname{id}\ten \Psi(\rho)}{L_1(\M)\hten e\M e^{op}}\ge \norm{\operatorname{id}\ten \Psi(\si)}{L_1(\M)\hten e\M e^{op}}\]
\end{theorem}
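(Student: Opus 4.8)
\emph{Strategy and the easy direction.} The two implications have opposite characters. The forward (``only if'') implication is a data-processing statement, and I would prove it from the functoriality of the operator space projective tensor norm. Assume $\si=(\Phi\ten\id)(\rho)$ for a CPTP map $\Phi:L_1(\M)\to L_1(\M)$, and fix a finite projection $e\in\M$ together with a test channel $\Psi:L_1(\N)\to L_1(e\M e^{op})\cap e\M e^{op}$. The maps $\Phi$ and $\Psi$ act on disjoint tensor legs, so
\[(\id\ten\Psi)(\si)=(\Phi\ten\id)\bigl((\id\ten\Psi)(\rho)\bigr).\]
A CPTP map is a normal completely positive trace-preserving map, hence a complete contraction on $L_1(\M)$, and by functoriality $\Phi\ten\id$ is a complete contraction on $L_1(\M)\hten e\M e^{op}$. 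Applying it to $(\id\ten\Psi)(\rho)$ gives the required inequality.

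\emph{The converse: separation.} For the harder implication I would argue by contradiction using a separation theorem. Set
\[C=\{(\Phi\ten\id)(\rho)\ :\ \Phi:L_1(\M)\to L_1(\M)\ \text{CPTP}\}\subseteq L_1(\M\,\overline{\ten}\,\N).\]
The normal CPTP maps form a point-ultraweakly compact convex set and $\Phi\mapsto(\Phi\ten\id)(\rho)$ is continuous for the fixed normal density $\rho$, so $C$ is weak-$*$ compact, hence closed and convex. Suppose $\si\notin C$. To separate $\si$ from $C$ I would invoke the tracial Hahn-Banach theorem of Helton-Klep-McCullough rather than the bare Hahn-Banach theorem: the trace-preserving constraint defining $C$ is precisely of tracial type, and the tracial version yields a \emph{positive} separating element $h\in\M\,\overline{\ten}\,\N$ with
\[\tau_{\M\overline{\ten}\N}(h\si)\ >\ \sup_{\Phi\ \text{CPTP}}\tau_{\M\overline{\ten}\N}\bigl(h\,(\Phi\ten\id)(\rho)\bigr).\]

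\emph{From the certificate to a test channel.} Now comes the crux: reading $h$ as a test channel. Choosing a finite projection $e\in\M$ and compressing $h$ to the corner $(e\M e)\,\overline{\ten}\,\N$, I would view the resulting positive element, under the canonical op-flip identifying $e\M e$ with $e\M e^{op}$, as the Choi operator of a completely positive map $\Psi_h:L_1(\N)\to e\M e^{op}$; positivity of $h$ is exactly complete positivity of $\Psi_h$, and normalizing the $\M$-marginal of $h$ turns $\Psi_h$ into a genuine channel. The von Neumann algebra form of the correspondence \eqref{ana}, together with Pisier's factorization of $\norm{\cdot}{L_1(\M)\hten e\M e^{op}}$ as an infimum of traces $\tau_\M(\omega)$ over dominating elements $\omega\ten 1$, should then give the two estimates
\[\tau_{\M\overline{\ten}\N}(h\si)\le\norm{(\id\ten\Psi_h)(\si)}{L_1(\M)\hten e\M e^{op}},\]
\[\sup_{\Phi}\tau_{\M\overline{\ten}\N}\bigl(h\,(\Phi\ten\id)(\rho)\bigr)=\norm{(\id\ten\Psi_h)(\rho)}{L_1(\M)\hten e\M e^{op}},\]
the first being the elementary one-sided duality bound and the second a strong-duality identity. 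Chaining these with the separation inequality yields $\norm{(\id\ten\Psi_h)(\si)}{}>\norm{(\id\ten\Psi_h)(\rho)}{}$, contradicting the hypothesis for the pair $(e,\Psi_h)$ and forcing $\si\in C$.

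\emph{Main obstacle.} I expect essentially all the difficulty to sit in the last paragraph. Two points are delicate: the separating $h$ is a priori spread over all of $\M$ and must be compressed to a finite corner without destroying the strict inequality, which is exactly why the hypothesis quantifies over all finite projections $e$; and the strong-duality identity for the supremum over channels is where the min-entropy/projective-norm correspondence must be used in full. That correspondence, for a general semifinite $\M$, is available only because $\M$ is injective --- injectivity supplies the Arveson/tracial Hahn-Banach extension that both validates the norm formula and lets a channel defined on a corner be extended to an honest CPTP map on $L_1(\M)$. Thus injectivity of $\M$ is not a technical convenience but the hypothesis underpinning both halves of the argument.
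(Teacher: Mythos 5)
Your overall architecture (easy direction by contractivity of $\Phi\ten\id$ on the projective tensor product; converse by separating $\si$ from the orbit $C(\rho)$ and converting the separating functional into a test channel via the $CB$-duality and the injectivity-dependent norm formula) matches the paper's. However, three steps you treat as routine are exactly where the paper has to work, and as written each is a genuine gap.

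First, the closedness of $C(\rho)$. You assert that the normal CPTP maps form a point-ultraweakly compact set, so $C$ is weak-$*$ compact. This is false: the unit ball of $CB(\M^{op},\M^{op})=(L_1(\M)\hten\M^{op})^*$ is weak-$*$ compact, but the \emph{normal} maps are not closed in it --- a point-weak$^*$ limit of normal UCP maps can acquire a singular part. The paper's Lemma 3.7 extracts a limit $\Phi^{\dag}$, decomposes the induced map on the predual as $\Phi=\Phi_n+\Phi_s$ into normal and singular parts, and shows $\Phi_s\ten\id(\rho)=0$ by comparing a normal functional with a singular one (using that the limit $\si$ is normal); only then does it correct the resulting CPTNI map $\Phi_n$ to a CPTP map. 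Without this argument your set $C$ is not known to be closed and the separation cannot start. Second, the positivity of the separating certificate. Invoking the Helton--Klep--McCullough tracial Hahn--Banach theorem to produce a positive $h$ is not available here: that result lives in the finite-dimensional matrix-convex setting, and in this paper its von Neumann analogue (Theorem 5.5) is \emph{deduced from} the quantum-majorization machinery, so your route is circular relative to the paper's logic. The paper instead applies the ordinary Hahn--Banach theorem to get $x_1\in\M\ten_{min}\N$, replaces it by a finite algebraic tensor $x_2=\sum_j a_j\ten b_j$, takes the real part $x_3$, and then exploits the normalization $\tau(\si)=\tau(\Phi\ten\id(\rho))=1$ to pass to $x_4=x_3+K\,1\ten 1$ with $K=\sum_j\norm{a_j}{}\norm{b_j}{}$, which is a finite sum of tensor products of \emph{positive} elements and still separates; this is the device that makes $T$ completely positive (indeed entanglement-breaking after compression). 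Third, the upgrade from CP to CPTP into a finite corner. ``Normalizing the $\M$-marginal'' only yields a CPTNI map; to make it trace-preserving one must add a correction $T_0(x)=\big(\tr(x)-\tr(T_2(x))\big)e/\tau_\M(e)$ and verify that the strict inequality between the two projective norms survives, which forces the paper into a case distinction on whether $\tau_\M\ten\id(\rho)=\tau_\M\ten\id(\si)$ and, when they differ, a different explicit entanglement-breaking channel built from the support projection of $(\si_\N-\rho_\N)_+$. You correctly locate the role of injectivity in the strong-duality identity $\sup_{\Phi}\lan\Phi,\id\ten T(\rho)\ran=\norm{\id\ten T(\rho)}{L_1(\M)\hten\M^{op}}$ (Proposition 3.5 via Theorem 3.6), but injectivity is \emph{not} used to extend corner channels to $\M$; that part is done by hand.
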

Here the $L_1(\M)\hten \N$-norm gives the analogue of $H_{min}$ as in \eqref{ana}. We note that the assumption on injectivity is crucial in our argument. Indeed, we show that for semifinite von Neumann algebras, the conditional min-entropy $H_{min}$ coincides with the projective tensor norm $L_1(\M)\hten \N$ if and only if $\M$ is injective. This can be viewed as a predual form of Haagerup's characterization of injectivity via decomposablity \cite{haagerup84}. Beyond injectivity, it is not clear whether the above equivalence holds and we do not know the information-theoretic meaning of the projective tensor norm.

The above theorem admits several variants. By taking $\N= l_\infty$, the commutative von Neumann algebra of bounded sequences, Theorem 1.1 concerns the quantum interpolation
problem of converting an infinite family of density operators into another family of density operators using a CPTP map. On the other hand, the dual form of Theorem 1.1 provides a characterization for the  factorization of CPTP maps (a problem known as channel factorization).  A CPTP map $S$ is quantum majorized by $T$ if  $S$ admits a factorization $S=\Phi\circ T$ for some CPTP map $\Phi$. Note that in finite dimensions, quantum majorization applies to CPTP maps via their Choi matrices. However, in infinite dimensions, the Choi matrix of a CPTP map is never trace class and our dual consideration is needed. Inspired by Jenvoca's work \cite{jencovachannel} on statistical deficiency for CPTP maps, we also consider the approximate case when the error $\id\ten\Phi(\rho)-\si$ is small but non-zero.

Our approach also has applications to the tracial Hahn-Banach theorem in \cite{tracial}. The tracial Hahn-Banach theorem is a dual form of Effros-Wrinkler's separation theorem for matrix convex sets. We find that the duality behind the tracial Hahn-Banach theorem is the same duality  as that between the operator space projective tensor product and completely bounded maps. Using an idea similar to the one in characterization of quantum majorization, we give  a tracial Hahn-Banach theorem on $L_1(\M)\hten E$ for a semi-finite injective von Neumann algebra  $\M$ and an arbitrary operator space $E$. If we  replace $L_1(\M)$ by an abstract operator space, our method gives some analogous results under the assumptions of $1$-locally reflexivity and completely contractive approximation property.

Our paper is organized as follows. Section \ref{sec:prelim} reviews some basic operator space theory needed for the remainder of the paper. In Section \ref{VNA}, we first discuss the relation between $H_{min}$ and projective tensor norm and the connection to injectivity of von Neumann algebras. After that, we prove our main theorem and its variants with respect to  channel factorization and the approximate case. In particular, all the results in this section apply to $B(H)$ with $H$ being infinite dimensional. As this is arguably the case of most interest in  quantum information theory, we summarize the implications for $B(H)$ in Section \ref{BH case}. Section \ref{tracialset} is devoted to the tracial Hahn-Banach theorem and the connection to noncommutative vector-valued $L_1$ space. Section \ref{sec:opspace} discusses the parallel results on projective tensor product of abstract operator spaces.

\section{Operator Space Preliminaries}\label{sec:prelim}
In this section we briefly recall some operator space basics that are needed in our discussion. We refer to the books \cite{pisieros,ER00} for more information on operator space theory. We denote by $B(H)$ the bounded operator on a complex Hilbert space $H$ and $M_n:=M_n(\mathbb C)$  the algebra of  $n\times n$ complex-valued matrices. A (concrete) operator space $E$ is a closed subspace of some $B(H)$. We denote by $M_n(E)$ the set of $n \times n$ matrices with entries from $E$ and similar $M_{n,m}(E)$ for $n\times m$ rectangular matrix. The space $M_n(B(H))$ is naturally isomorphic to $B(H^{(n)})$, where $H^{(n)}=\ell^n_2(H)$ is the Hilbert space direct sum of $n$ copies of $H$. For all $n\ge 1$, the inclusion $M_{n}(E)\subset M_n(B(H))\cong B(H^{(n)})$ induces a norm
on the matrix level space $M_n(E)$ which we denote by $\norm{\cdot}{M_n(E)}$. The operator space structure of $E$ is given by the norm sequence   $\norm{\cdot}{M_n(E)}, n\ge 1$.

Given a linear map $u:E\to F$ between two operator spaces $E$ and $F$,  $u$ is \emph{completely bounded} (or $CB$) if its completely bounded norm ($CB$-norm)
\[\norm{u}{cb}:=\sup_{n\ge 1} \norm{\operatorname{id}_n\ten u:M_n(E)\to M_n(F)}{op}\]
is finite. Here $id_n$ is the identity map on $M_n$. We say $u$ is a \emph{complete isometry} if for each $n$, $id_n\ten u$ is an isometry. We denote by $CB(E,F)$ the Banach space of all completely bounded maps $E\rightarrow F$ equipped with the $CB$-norm. Moreover, $CB(E,F)$ is again an operator space with the operator space structure given by $M_n(CB(E,F))=CB(E,M_n(F))$. In particular, the operator space dual is defined as
\[E^*=CB(E,\mathbb{C})\pl.\]
Throughout the paper, we will use $\ten$ for algebraic tensor product. Given two operator spaces $E\subset B(H_A)$ and $F\subset B(H_B)$, the operator space injective tensor product $E\ten_{min} F$ is defined by the (completely) isometric embedding
\begin{align}E\ten_{min} F\subset B(H_A\ten_2 H_B)\pl \label{tensor}\end{align}
where $H_A\ten_2 H_B$ is the Hilbert space tensor product. Namely, $E\ten_{min} F$ is the norm completion of $E\ten F$ for the inclusion $E\ten F\subset B(H_A\ten_2 H_B)$. Via injectivity, one has the (completely) isometric embedding
\begin{align}E^*\ten_{min} F\subset CB(E,F)\pl.\label{inje}\end{align}

Another important tensor product for our work is the projective tensor product. We denote by $\norm{\cdot}{HS}$   the Hilbert-Schmidt norm. The operator space projective tensor product $E\widehat{\ten}F$ is defined as the completion of $E\ten F$ with respect to the following norm,
\begin{align*}
\norm{z}{E\widehat{\ten}F}=\inf \norm{a}{HS} \norm{x}{M_l(E)}\norm{y}{M_m(F)}\norm{b}{HS}
\end{align*}
where the infimum runs over all factorizations of rectangular matrices $a,b$ and $x=(x)_{i,j=1}^m\in M_n(E), y=(y_{pq})_{p,q=1}^m\in M_m(F)$ such that
\begin{align}z=\sum_{i,j=1}^l\sum_{p,q=1}^m a_{i,p}x_{i,j}\ten y_{pq} b_{j,q}\label{b}\pl.
\end{align} For $z=(z_{rs})_{r,s=1}^n\in M_n(E\ten F)$, we consider the following factorization
\begin{align}z_{rs}=\sum_{i,j=1}^l\sum_{p,q=1}^m a_{r,ip}x_{i,j}\ten y_{pq} b_{jq,s}\pl, \label{a}\end{align}
where $a\in M_{n,ml}, b\in M_{ml,n}$ and $x\in M_l(E),y\in M_m(F)$. The operator space structure of $E\hten F$ is defined as
\begin{align*}
\norm{z}{M_n(E\widehat{\ten}F)}=\inf \norm{a}{M_{n,ml}} \norm{x}{M_l(E)}\norm{y}{M_m(F)}\norm{b}{M_{ml,n}}
\end{align*}
where the infimum runs over all factorizations in \eqref{a}. An equivalent characterization is the following duality \cite{ER00,PB}
\begin{align}(E\widehat{\ten} F)^*\cong CB(E,F^*) \pl. \label{pro}\end{align}
For $x\in E, y\in F$ and $\Phi\in CB(E,F^*)$. The dual pairing is
\[\lan x\ten y, \Phi\ran=\lan \Phi(x),y\ran_{(F^*,F)} \pl.\]

Let us mention some basic examples related to our discussion. Let $\K(H)$ denote the space of compact operators on $H$ and $S_1(H)$ the space of trace class operators. We have the operator space dual relations
\begin{align}S_1(H)^*=B(H)\pl, \pl \K(H)^*=S_1(H) \pl, \label{dual}\end{align}
where both dual pairings are given by the trace
\begin{align*}\lan b,a\ran_{(B(H),S_1(H))}=\tr(b^ta)\pl, \lan a,c\ran_{(S_1(H),K(H))}=\tr(a^tc)\label{trans}\end{align*}
where $a^t$ is the transpose of $a$ with respect to a (fixed) orthonormal basis. For two Hilbert spaces $H_A$ and $H_B$,
by \eqref{tensor} and \eqref{inje}
we have the isometric embedding  \[  B(H_A)\ten_{min}B(H_B)\subset B(H_A\ten_2 H_B)\pl, B(H_A)\ten_{min}B(H_B)\subset CB(S_1(H_A),B(H_B)).\] Indeed, one has the equality
\begin{align}\label{iso}B(H_A\ten_2 H_B)\cong CB(S_1(H_A),B(H_B)).\end{align}
Note that by \eqref{pro} and \eqref{dual},
\[CB(S_1(H_A),B(H_B))=S_1(H_A)\widehat{\ten}S_1(H_B)^*\pl, \quad B(H_A\ten_2 H_B)=S_1(H_A\ten_2 H_B)^*\pl.\]
For preduals, $S_1(H_A)\widehat{\ten}S_1(H_B)\cong S_1(H_A\ten_2 H_B)$.

Another example related to our discussion is the space $S_1(H_B)\widehat{\ten} B(H_A)$. Let $S_2(H)$ denote the Hilbert-Schmidt operators on $H$. The operator space projective tensor norm on $S_1(H_B)\widehat{\ten} B(H_A)$ admits the following expression (c.f. \cite{pisier93}) for $x\in S_1(H_B)\ten B(H_A)$
\[\norm{x}{S_1(H_B)\widehat{\ten} B(H_A)}=\inf_{x=(a\ten 1) y(1\ten b)}\norm{a}{S_2(H_B)}\norm{b}{S_2(H_B)}\norm{y}{B(H_B)\ten_{min}B(H_A)}\]
where the infimum is taken over all possible factorizations of $x=(a\ten 1_A)y(b\ten 1_A)$ with $a,b\in S_2(H_B)$ and $1_A$ denotes the identity operator on $H_A$.   For positive $x$, it suffices to choose $a=b^*$ and, by rescaling $\norm{a}{2}=1$, we obtain
\begin{align*} \norm{x}{S_1(H_B)\widehat{\ten} B(H_A)}&= \inf\{ \norm{y}{B(H_B)\ten_{min}B(H_A)}\,|\, x=(a\ten 1)y(a^*\ten 1)\pl \text{for some $\norm{a}{S_2(H_B)}=1$} \}
\\& =\inf \{\la \,|\, x\le \la 1\ten \si \pl \text{for some density operator $\si\in S_1(H_B)$}\}\pl.
\end{align*}
Therefore, this norm on $S_1(H_B)\widehat{\ten} B(H_A)$ corresponds to the conditional min entropy $H_{min}$. That is, for a bipartite density operator $\rho$,
\[H_{min}(A|B)_\rho=-\log \norm{\rho}{S_1(H_B)\widehat{\ten} B(H_A)}\pl.\]
At the dual level, by \eqref{pro} we have
\begin{align}\label{produal} (S_1(H_B)\widehat{\ten} B(H_A))^*=CB(B(H_A),B(H_B))\pl.\end{align}
Note that a CPTP map $\Phi:S_1(H_B)\to S_1(H_A)$ is completely positive trace preserving, and hence
\[ \Phi\in CB(S_1(H_B),S_1(H_A))\subset CB(B(H_A),B(H_B)) \]
where $CB(S_1(H_B),S_1(H_A))\subset CB(B(H_A),B(H_B))$ as normal cb maps by taking adjoints. Therefore, the $S_1(H_B)\widehat{\ten} B(H_A)$ norm or equivalently $H_{min}$, is the dual of CB-norm with respect to quantum channels. This duality is implicitly used in Gour et al's arguments in \cite{gour18}. In quantum information literature, the $CB$-norm of $CB(S_1(H_B),S_1(H_A))$ is also called diamond norm. The diamond norm and its dual norm has been used by Jen\v{c}ov\'a in studying Le Cam's deficiency for quantum channels \cite{jencovachannel}.

\section{Quantum majorization on von Neumann algebras}\label{VNA}
\subsection{$H_{min}$ and injectivity of von Neumann algebras} \label{Hmin relation}
 We first discuss the connection between the conditional min entropy $H_{min}$ and the projective tensor product in the setting of tracial von Neumann algebras. Throughout this paper, we assume that $(\M,\tau_\M)$ and $(\N,\tau_\N)$ are semifinite von Neumann algebras with normal faithful semifinite traces $\tau_\M$ (resp. $\tau_\N$). We introduce the notation
 \[\M_0:=\cup_{e} e\M e\pl,\]
 where the union runs over all projections with $\tau_\M(e)<\infty$ which forms a lattice.
 For $1\le p< \infty$, the space $L_p(\M)$ is the completion of $\M_0$ with respect to the $L_p$-norm
 \[\norm{a}{L_p(\M)}=\tau_\M(|a|^{p})^{1/p}\pl, a\in \M_0\pl.\]
 We will often use the shorthand notation $\norm{\cdot}{p}$ for the $p$-norm and $\norm{\cdot}{\infty}$ for the operator norm in $\M$. Let $\M^{op}=\{a^{op}| a\in \M\}$ be the opposite algebra equipped with reversed multiplication $a^{op}\cdot b^{op}=(ba)^{op}$ and trace $\tau_{\M^op}(a^{op})=\tau_\M(a)$.
The predual of $\M$ can be identified with $\M_*=L_1(\M^{op})$, via the pairing $\lan a^{op},b \ran=\tau_\M(ab)$ for  $a\in L_1(\M)$ and $b\in \M$.
Recall that the von Neumann algebra tensor product $\M\overline{\ten} \N$ is the weak$^*$-closure of $\M\ten_{min}\N$.
The Effros-Ruan isomorphism \cite{ER00} gives a complete isometry
\begin{align}\N\overline{\ten}\M \cong CB(\N_*, \M)\cong CB(L_1(\N^{op}), \M)\pl.\label{er}\end{align}
This isomorphism is order preserving. Indeed, a positive operator $x\in \N\overline{\ten}\M$ corresponds to a completely positive map $T_x\in CB(L_1(\N^{op}), \M)$. As for the predual of \eqref{er},
we have \[L_1(\M)\hten L_1(\N)=L_1(\M\overline{\ten }\N)=(\M^{op}\overline{\ten }\N^{op})_*\pl.\]

The conditional min entropy $H_{min}$ is related to  the vector-valued $L_1$-space introduced in \cite{pisier93}.
We will use the shorthand notation that for $a,b\in \M, y\in \M\overline{\ten }\N$,
\[a\cdot y \cdot b:=(a\ten 1_\N) y (b\ten 1_\N).\]
We define the $L_1(\M,L_\infty(\N))$ norm for $x\in \M_0\ten \N$ as follows,
\begin{align*}
\norm{x}{L_1(\M,L_\infty(\N))}=\inf \{\norm{a}{L_2(\M)}\norm{y}{\M\overline{\ten} \N}\norm{b}{L_2(\M)}|\, x=a\cdot y\cdot b\pl, a,b \in \M_0, y\in \M\ten \N\},
\end{align*}
where the infimum is over all factorizations $x=a\cdot y\cdot b$.
Then $L_1(\M,L_\infty(\N))$ is defined as the completion of $\M_0\ten \N$ under the above norm. The triangle inequality for this norm is verified in \cite[Lemma 3.5]{pisier93}.
We will also use the shorthand notation \[\M\overline{\ten} \N_0=\cup_{q}\M \overline{\ten} q\N q\subset \M\overline{\ten} \N\pl,\] where the union runs over all projections $q\in \N$ with $\tau_\N(q)<\infty$.
For $x\in \M\overline{\ten}\N_0$, we define the $L_\infty(\M,L_1(\N))$ norm as
\begin{align*}&\norm{x}{L_\infty(\M,L_1(\N))}=\sup \{\norm{a\cdot x \cdot b}{L_1(\M\overline{\ten} \N)}| \norm{a}{L_2(\M)}=\norm{b}{L_2(\M)}=1\pl \}\pl.
\end{align*}
This norm clearly satisfies the triangle inequality.  The space $L_\infty(\M,L_1(\N))$ is defined as the norm completion of $\M\overline{\ten}\N_0$. Both spaces contain the corresponding algebraic tensor
\[L_1(\M)\ten \N\subset L_1(\M, L_\infty(\N))\pl, \pl \M\ten L_1(\N)\subset L_\infty(\M, L_1(\N)).\]
Indeed, for $a\ten b$ with $a\in L_1(\M)$ and $b\in \N$, let $e_n$ be the spectral projection of $|a|$ for the interval $[1/n,n]$. Then $e_nae_n\ten b$ converges in $L_1(\M,L_\infty(\N))$ and the limit can be identified with $a\ten b$. It is clear from the definitions that
\begin{enumerate}
\item[i)] a complete contraction $T:L_\infty(\N_1)\to L_\infty(\N_2)$ extends to a contraction \[\operatorname{id}_\M\ten T:L_1(\M,L_\infty(\N_1))\to  L_1(\M,L_\infty(\N_2))\pl.\]
\item[ii)] a complete contraction $S:L_1(\N_1)\to L_1(\N_2)$ extends to a contraction \[\operatorname{id}_\M\ten S:L_\infty(\M,L_1(\N_1))\to  L_\infty(\M,L_1(\N_2))\pl.\]
\end{enumerate}
For the trivial case $\N=\mathbb{C}$, we have $L_1(\M,\mathbb{C})=L_1(\M)$  and $L_\infty(\M,\mathbb{C})=L_\infty(\M)$. In general,
$L_\infty(\M,L_1(\N))$ is a subspace of $\Big( L_1(\M,L_\infty(\N))\Big)^*$. Indeed,
\begin{align*}
\norm{x}{L_\infty(\M,L_1(\N))}=&\sup \{\norm{a\cdot x \cdot b}{1} \pl |\pl  \norm{a}{2}=\norm{b}{2}=1, a,b\in \M_0  \}
\\=&\sup \{|\tau\big(y (a\cdot x \cdot b)\big)|  \norm{a}{2}=\norm{b}{2}=1, a,b\in \M_0, \norm{y}{\M\overline{\ten}\N}=1  \}
\\ = &\sup \{|\tau\big((b\cdot y \cdot a)x\big)|  \norm{a}{2}=\norm{b}{2}=1, a,b\in \M_0, \norm{y}{\M\overline{\ten}\N}=1  \}\\
= &\sup \{|\tau(zx)|  \norm{z}{L_1(\M,L_\infty(\N))}=1  ,z\in \M_0\ten \N\}\pl.
\end{align*}
Here and in the following we will use $\tau:=\tau_\M\ten \tau_\N$ for the product trace.
\begin{lemma}\label{positive} i) For any self-adjoint $x\in \M_0\ten \N$,
\begin{align*} \norm{x}{L_1(\M,L_\infty(\N))}=\inf \{\norm{a}{L_2(\M)}\norm{y}{\M\overline{\ten} \N}\norm{a^*}{L_2(\M)}| \pl {x=a\cdot y\cdot a^* }\pl, a\in \M_0, y\pl \text{self-adjoint}\}.
\end{align*}
ii) For any positive $x \in \M \overline{\ten}\N_0$,
\begin{align*}
\norm{x}{L_\infty(\M,L_1(\N))}=\sup \{ \tau(a\cdot x\cdot a^*) |\pl  \norm{a}{L_2(\M)}=1 \pl \}.
\end{align*}
\end{lemma}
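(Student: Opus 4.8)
The plan is to prove each identity as a pair of inequalities, one of which is a trivial specialization and the other of which carries all the content. In both parts the substance is a \emph{symmetrization}: replacing an asymmetric object by a symmetric one, $a\cdot y\cdot a^*$ in part i) and $a\cdot x\cdot a^*$ in part ii), without loss. I would dispose of part ii) first, since it is the cleaner of the two, and then treat the symmetrization in part i), which is where the real work lies.

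For part ii), I would first use positivity twice. Since $x\ge 0$, the element $a\cdot x\cdot a^*=(a\ten 1)x(a^*\ten 1)$ is positive, so $\tau(a\cdot x\cdot a^*)=\norm{a\cdot x\cdot a^*}{1}$; taking $b=a^*$ in the defining supremum then gives immediately that the right-hand side is $\le\norm{x}{L_\infty(\M,L_1(\N))}$. For the reverse inequality, fix $a,b\in\M_0$ with $\norm{a}{2}=\norm{b}{2}=1$ and factor $x=x^{1/2}x^{1/2}$, which is legitimate because $x\ge 0$ lies in $\M\overline{\ten}\N_0$ and hence so does $x^{1/2}$. Writing $a\cdot x\cdot b=(a\cdot x^{1/2})(x^{1/2}\cdot b)$ and applying the noncommutative H\"older inequality $\norm{uv}{1}\le\norm{u}{2}\norm{v}{2}$, I would compute $\norm{a\cdot x^{1/2}}{2}^2=\tau(a\cdot x\cdot a^*)$ and $\norm{x^{1/2}\cdot b}{2}^2=\tau(b^*\cdot x\cdot b)$. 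Both are of the admissible form $\tau(c\cdot x\cdot c^*)$ with $\norm{c}{2}=1$ (take $c=a$ and $c=b^*$), so their geometric mean is bounded by the supremum on the right, giving $\norm{a\cdot x\cdot b}{1}\le\sup_{\norm{c}{2}=1}\tau(c\cdot x\cdot c^*)$; taking the supremum over $a,b$ finishes the direction.

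For part i), the inequality ``$\ge$'' is the trivial one: symmetric factorizations $x=a\cdot y\cdot a^*$ with $y$ self-adjoint form a subclass of those defining the norm, once one grants the standard remark (a weak-$*$ density/Kaplansky argument using normality of $\tau$) that the defining infimum is unchanged when $y$ ranges over $\M\overline{\ten}\N$ rather than $\M\ten\N$. The content is ``$\le$'': given an arbitrary factorization $x=a\cdot y\cdot b$, I would produce a symmetric one of no larger weight. First rescale $a\mapsto\lambda a,\ b\mapsto\lambda^{-1}b$ to arrange $\norm{a}{2}=\norm{b}{2}$; this leaves the product $\norm{a}{2}\norm{y}{\M\overline{\ten}\N}\norm{b}{2}$ unchanged and is precisely what makes the arithmetic--geometric mean estimate below tight. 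Using $x=x^*$, write $x=\tfrac12(a\cdot y\cdot b+b^*\cdot y^*\cdot a^*)=R\cdot Y\cdot R^*$ with the row $R=\tfrac1{\sqrt2}(a,\,b^*)\in M_{1,2}(\M_0)$ and the self-adjoint $Y=\left(\begin{smallmatrix}0&y\\ y^*&0\end{smallmatrix}\right)\in M_2(\M\overline{\ten}\N)$, noting $\norm{Y}{M_2(\M\overline{\ten}\N)}=\norm{y}{\M\overline{\ten}\N}$. Finally I would collapse the matrix factor by the polar decomposition $R=a'V$, with $a'=(RR^*)^{1/2}=\big(\tfrac12(aa^*+b^*b)\big)^{1/2}\in\M_0$ positive and $V\in M_{1,2}(\M)$ a partial isometry, obtaining $x=a'\cdot y'\cdot(a')^*$ where $y'=V\cdot Y\cdot V^*$ is self-adjoint with $\norm{y'}{\M\overline{\ten}\N}\le\norm{Y}{M_2(\M\overline{\ten}\N)}=\norm{y}{\M\overline{\ten}\N}$. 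Since $\norm{a'}{2}^2=\tau(RR^*)=\tfrac12(\norm{a}{2}^2+\norm{b}{2}^2)=\norm{a}{2}\norm{b}{2}$ after the rescaling, the new factorization has weight $\norm{a'}{2}^2\norm{y'}{\M\overline{\ten}\N}\le\norm{a}{2}\norm{y}{\M\overline{\ten}\N}\norm{b}{2}$, exactly as needed.

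I expect the main obstacle to be the norm bookkeeping in part i): the block factorization $x=R\cdot Y\cdot R^*$ is easy to write down, but matching the $L_2$-weights requires both the preliminary rescaling (so that the step $\tfrac12(\norm{a}{2}^2+\norm{b}{2}^2)=\norm{a}{2}\norm{b}{2}$ is an equality, rather than the wrong-way AM--GM inequality) and the polar-decomposition collapse that turns the row $R$ and the $2\times 2$ middle term $Y$ back into a single admissible pair $(a',y')$ with $a'\in\M_0$ and $y'\in\M\overline{\ten}\N$ self-adjoint. The only other delicate point is the routine identification of the defining infimum under enlarging $y$ from $\M\ten\N$ to $\M\overline{\ten}\N$, which I would dispatch by a weak-$*$ approximation and the contraction property of $\id_\M\ten T$.
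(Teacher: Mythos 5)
Your proof is correct and follows essentially the same route as the paper's: part ii) is the identical Cauchy--Schwarz/H\"older argument via $x=x^{1/2}x^{1/2}$, and part i) uses the same $2\times 2$ antidiagonal symmetrization $x=R\cdot\bigl(\begin{smallmatrix}0&y\\ y^*&0\end{smallmatrix}\bigr)\cdot R^*$ with the row $R$ built from $a$ and $b^*$. The only (harmless) difference is how the row is collapsed to a single positive element: the paper conjugates by $d=(aa^*+b^*b+\delta e)^{1/2}$ and lets $\delta\to 0$ to handle non-invertibility, whereas you use the polar decomposition $R=(RR^*)^{1/2}V$ together with a preliminary rescaling $\norm{a}{2}=\norm{b}{2}$, which achieves the same bound without the $\delta$-perturbation.
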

\begin{proof}
For ii),   H\"older's inequality gives,
\begin{align*}\norm{x}{L_\infty(\M, L_1(\N))}= &\sup_{\norm{\pl a\pl }{2}=\norm{\pl b\pl }{2}=1} \norm{(a\ten 1)x(b\ten 1)}{1}\\ \le & \sup_{\norm{\pl a\pl }{2}=1} \norm{(a\ten 1)x^{\frac{1}{2}}}{2}\sup_{\norm{\pl b\pl }{2}=1} \norm{x^{\frac{1}{2}}(b\ten 1)}{2}
\\ =& \sup_{\norm{\pl a\pl }{2}=1} \norm{(a\ten 1)x (a^*\ten 1)}{1}^{\frac{1}{2}}\sup_{\norm{\pl b\pl }{2}=1} \norm{(b^*\ten 1)x (b\ten 1)}{1}^{\frac{1}{2}}\\ =& \sup_{\norm{\pl a\pl }{2}=1} \norm{(a\ten 1)x (a^*\ten 1)}{1}=\sup_{a}\tau(a\cdot x\cdot a^*).
\end{align*}
For i), choose $x=(a\ten 1)y(b\ten 1)$ such that $a,b\in e\M e$ and
\[ \norm{a}{L_2(\M)}=\norm{b}{L_2(\M)}=1, \norm{y}{\M \overline{\ten}\N}<\norm{x}{L_1(\M,L_\infty(\N))}+\epsilon. \]
Take $d=(aa^*+b^*b+\delta e)^{\frac{1}{2}}$. Then $d> 0$ is invertible in $e\M e$ and $\norm{d}{2}=(2+\delta \tau(e))^\frac{1}{2}$. Note that $x=x^*$ implies that \begin{align*}
x&=\frac{1}{2}\Big(a\cdot y \cdot b+b^*\cdot y^*\cdot a\Big)
\\ &= \frac{1}{2} d\cdot \Big( d^{-1}a \cdot y \cdot bd^{-1}+  d^{-1}b^* \cdot y^* \cdot ad^{-1}\Big)\cdot d
\\ &=  d\cdot \tilde{y} \cdot d,
\end{align*}
where
\begin{align*}
\tilde{y}&=\frac{1}{2}\Big( d^{-1}a \cdot y \cdot bd^{-1}+  d^{-1}b^* \cdot y^* \cdot ad^{-1}\Big)
\\ &= \frac{1}{2}\left[\begin{array}{cc} d^{-1}a & d^{-1}b^*
\end{array}\right]
\cdot
\left[\begin{array}{cc} 0 &y\\ y^*&0
\end{array}\right]\cdot
\left[\begin{array}{c} a^*d^{-1}\\ bd^{-1}
\end{array}\right].
\end{align*}
Since $\left\|\left[\begin{array}{cc} 0 &y\\ y^*&0
\end{array}\right]\right\|_{M_2(\M)}=\norm{y}{\M}$, $\norm{\left[\begin{array}{cc} d^{-1}a & d^{-1}b^*
\end{array}\right]}{M_{1,2}(\M)}=\norm{d^{-1}(aa^*+b^*b)d^{-1}}{\M}\le 1$ and similarly $\left\|\left[\begin{array}{c} a^*d^{-1}\\ bd^{-1}
\end{array}\right]\right\|_{M_{2,1}(\M)}\le 1$, it follows that
\[ \norm{\tilde{y}}{\infty}\le \frac{1}{2}\norm{\left[\begin{array}{cc} d^{-1}a & d^{-1}b^*
\end{array}\right]}{} \left\|\left[\begin{array}{cc} 0 &y\\ y^*&0
\end{array}\right]\right\|\left\|
\left[\begin{array}{c} a^*d^{-1}\\ bd^{-1}
\end{array}\right]\right\|\le \frac{1}{2}\norm{y}{\infty}\pl.\]
Thus we have $x=d\cdot \tilde{y}\cdot d$ with
\[\norm{d}{2}^2\le 2+\delta\tau(e)\pl, \norm{\tilde{y}}{\infty}\le \frac{1}{2}\norm{y}{\infty}\pl.\]
Choosing $\delta$ small enough yields the assertion.
\end{proof}
We define positivity and self-adjointness on $L_1(\M,L_\infty(\N))$ and $L_1(\M,L_\infty(\N))$ as follows.
We say $\rho\in L_1(\M,L_\infty(\N))$ is positive (resp.\ self-adjoint) if there exists a positive (resp.\ self-adjoint) sequence $\rho_n\in \M_0\ten \N$ such that $\rho_n \to \rho$. For two self-adjoint operators  $\rho$ and $\si$, we say $\rho\le \si$ if $\si-\rho$ is positive. The positivity and self-adjointness in $L_\infty(\M,L_1(\N))$ are defined similarly as limits of sequences in $\M\overline{\ten}\N_0$. The next lemma shows that the $L_1(\M,L_\infty(\N))$ norm for positive elements correspond to the conditional min entropy $H_{min}$. Recall that $\rho\in L_1(\M)$ is a density operator if $\rho\ge 0$ and $\tau_\M(\rho)=1$.
\begin{lemma}\label{extend}Let $x\in L_1(\M,L_\infty(\N))$ be self-adjoint. Define
\[\la(x)=\inf \{ \la \,| \pl x \le \la \si \ten 1 \pl\text{for some density operator} \pl \si \in L_1(\M)\}\pl.\]
Then
\begin{enumerate}
\item[i)] $\la(x)\le \norm{x}{L_1(\M,L_\infty(\N))}$,
\item[ii)] $\la(x)=\norm{x}{L_1(\M,L_\infty(\N))}$ if $x$ is positive.
\end{enumerate}
\end{lemma}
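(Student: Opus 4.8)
The plan is to establish the two inequalities separately: part i) is an immediate consequence of the factorization defining the norm, while the reverse inequality in part ii) rests on an approximation argument that is the technical core of the lemma.

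For part i), I would start from the symmetric factorization supplied by Lemma \ref{positive} i): given $\eps>0$, write $x=a\cdot y\cdot a^*$ with $y=y^*\in\M\overline{\ten}\N$ and $\norm{a}{2}^2\norm{y}{\infty}\le\norm{x}{L_1(\M,L_\infty(\N))}+\eps$. Since $y\le\norm{y}{\infty}\,1$, conjugating by $a\ten 1_\N$ gives $x=(a\ten 1)y(a^*\ten 1)\le\norm{y}{\infty}\,(aa^*\ten 1)$. Setting $\si=aa^*/\tau_\M(aa^*)$, a density operator with $\tau_\M(aa^*)=\norm{a}{2}^2$, we obtain $x\le\big(\norm{y}{\infty}\norm{a}{2}^2\big)\,\si\ten 1$, so $\la(x)\le\norm{y}{\infty}\norm{a}{2}^2$. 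Letting $\eps\to 0$ yields $\la(x)\le\norm{x}{L_1(\M,L_\infty(\N))}$.

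For part ii) it remains to prove $\norm{x}{L_1(\M,L_\infty(\N))}\le\la(x)$ for positive $x$. Fix a density operator $\si$ with $x\le\la\,\si\ten 1$, and let $p$ be the support projection of $\si$. First I would observe that positivity forces the support of $x$ under $p\ten 1$: compressing $x\le\la\,\si\ten 1$ by $(1-p)\ten 1$ gives $((1-p)\ten 1)\,x\,((1-p)\ten 1)\le\la((1-p)\si(1-p))\ten 1=0$, whence $x((1-p)\ten 1)=0$ and $x=(p\ten 1)\,x\,(p\ten 1)$. Let $e_n$ be the spectral projection of $\si$ for $[1/n,\infty)$; then $\tau_\M(e_n)\le n<\infty$, so $e_n\in\M_0$, and $e_n\uparrow p$. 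The crucial point is that, although $\si^{-1/2}$ is unbounded, the domination $x\le\la\,\si\ten 1$ makes the compressed middle bounded: with $Y_n=(\si^{-1/2}e_n\ten 1)\,x\,(\si^{-1/2}e_n\ten 1)$ one has $0\le Y_n\le\la\,(e_n\ten 1)\le\la\,1$. Writing $a_n=\si^{1/2}e_n\in\M_0$ we recover the factorization $(e_n\ten 1)\,x\,(e_n\ten 1)=a_n\cdot Y_n\cdot a_n$, and since $\norm{a_n}{2}^2=\tau_\M(\si e_n)\le\tau_\M(\si)=1$ this yields $\norm{(e_n\ten 1)x(e_n\ten 1)}{L_1(\M,L_\infty(\N))}\le\norm{a_n}{2}^2\norm{Y_n}{\M\overline{\ten}\N}\le\la$.

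The main obstacle is then to show these truncations converge to $x$ in the $L_1(\M,L_\infty(\N))$ norm, so that $\norm{x}{L_1(\M,L_\infty(\N))}=\lim_n\norm{(e_n\ten 1)x(e_n\ten 1)}{L_1(\M,L_\infty(\N))}\le\la$. I would argue this through the compression maps $\Phi_n(w)=(e_n\ten 1)\,w\,(e_n\ten 1)$. Each $\Phi_n$ is a contraction on $L_1(\M,L_\infty(\N))$: on a factorization $w=a\cdot y\cdot b$ it acts by $a\mapsto e_na$, $b\mapsto be_n$, and $\norm{e_na}{2}\le\norm{a}{2}$, $\norm{be_n}{2}\le\norm{b}{2}$. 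Moreover $\Phi_n\to\id$ pointwise on the dense subspace $\{(p\ten 1)w(p\ten 1):w\in\M_0\ten\N\}$, which contains $x$: for such a $w$ one may take a factorization with legs satisfying $a=pa$, $b=bp$, and then $w-\Phi_n(w)=((p-e_n)a)\cdot y\cdot b+(e_na)\cdot y\cdot(b(p-e_n))$, whose norm is bounded by $\norm{(p-e_n)a}{2}\norm{y}{\M\overline{\ten}\N}\norm{b}{2}+\norm{e_na}{2}\norm{y}{\M\overline{\ten}\N}\norm{(p-e_n)b^*}{2}$. Since $p-e_n\downarrow 0$, normality of $\tau_\M$ gives $\norm{(p-e_n)a}{2}^2=\tau_\M(a^*(p-e_n)a)\to 0$ and likewise for $b^*$, so $\Phi_n(w)\to w$. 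A standard $\eps/3$-argument using the uniform contractivity of the $\Phi_n$ then gives $\Phi_n(x)\to x$, completing part ii); combined with part i) this yields equality for positive $x$. The delicate point to get right is precisely this last density-plus-uniform-boundedness step, which relies on the support reduction $x=(p\ten 1)x(p\ten 1)$ together with the normality of the trace.
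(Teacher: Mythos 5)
Your part i) and the operator-level core of part ii) are sound in spirit, but both lean on manipulations that are only justified for elements of the algebraic tensor $\M_0\ten \N$, and the passage to the abstract completion $L_1(\M,L_\infty(\N))$ is where the genuine gaps lie. In part i) you invoke Lemma \ref{positive} i) to produce an exact symmetric factorization $x=a\cdot y\cdot a^*$ of a general self-adjoint $x\in L_1(\M,L_\infty(\N))$, but that lemma is stated and proved only for $x\in \M_0\ten \N$; for an element of the completion the norm is by definition a completion norm and no exact factorization is supplied. The paper closes this by writing $x=\sum_n x_n$ as an absolutely convergent series of self-adjoint $x_n\in\M_0\ten\N$ with $\sum_n\|x_n\|\le \|x\|+\eps$ and using subadditivity of $\la$; you need some such step.

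In part ii) your route is genuinely different from the paper's and has an attractive feature: compressing by the spectral projections $e_n$ of $\si$ for $[1/n,\infty)$ and conjugating by $\si^{-1/2}e_n$ handles an arbitrary (possibly unbounded) $\si\in L_1(\M)$ in one stroke, whereas the paper first proves the inequality for $\si\in\M_0$ and then separately truncates a general $\si$ from above, using that $x\in\M_0\ten\N$ is a bounded operator. The gap is where you feed $Y_n$ back into the definition of the norm: the factorization estimate requires $Y_n\in\M\overline{\ten}\N$ with $\|Y_n\|_\infty\le\la$, but what you have derived is only $0\le Y_n\le \la(e_n\ten 1)$ in the abstract order of $L_1(\M,L_\infty(\N))$, where positivity means ``norm limit of positive elements of $\M_0\ten\N$''. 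It is not established that such an order-bounded abstract element is represented by a bounded operator, and that is exactly what is needed for $a_n\cdot Y_n\cdot a_n$ to be an admissible factorization. The same issue affects the support reduction: the implication $x\ge 0$, $((1-p)\ten 1)x((1-p)\ten 1)\le 0\Rightarrow x((1-p)\ten 1)=0$ is an operator-theoretic fact applied to an abstract element. Both problems disappear if you first prove ii) for positive $x\in\M_0\ten\N$ (where your computation with $Y_n$ is a genuine operator computation) and then extend to the completion via the Lipschitz estimate $|\la(x_1)-\la(x_2)|\le\la(x_1-x_2)\le\|x_1-x_2\|$ that follows from part i) --- which is precisely how the paper concludes; with that repair your argument goes through and your compression/$\eps/3$ machinery for $\Phi_n(x)\to x$ becomes unnecessary.
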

\begin{proof} We first discuss the case $x\in \M_0 \ten \N$. Suppose $x=(a\ten 1) y (a^*\ten 1)$ for some self-adjoint $y\in \M\ten \N$ and $\norm{a}{2}= 1$ with $a\in \M_0$. Then $x\le \norm{y}{\infty}aa^*\ten 1$, where $aa^*\in \M_0$. Then by Lemma \ref{positive}, we have
\[\la(x)\le \norm{x}{L_\infty(\M,L_1(\N))}\pl\] for $x\in \M_0 \ten \N$. Note that   $\la(x_1+x_2)\le \la(x_1)+\la(x_2)$ and hence
\begin{align*}|\la(x_1)-\la(x_2)|\le \la(x_1-x_2)\end{align*}
For general $x$ and $\epsilon>0$, we can find a self-adjoint sequence $x_n\in\M_0\ten \N$ such that $x=\sum_{n=1}^\infty x_n$ converges absolutely and \[\sum_{n}\norm{x_n}{L_1(\M,L_\infty(\N))}\le\norm{x}{L_1(\M,L_\infty(\N))}+\epsilon\pl.\]
Then $\la(x)\le \sum_{n}\la(x_n)\le \sum_{n}\norm{x_n}{L_1(\M,L_\infty(\N))}\le \norm{x}{L_1(\M,L_\infty(\N))}+\epsilon$. Since $\epsilon$ is arbitrary, this proves i).

To prove ii), first let $x\in e\M e\ten \N$ be positive. If $x\le \la \si \ten 1$ for some density operator $\si\in \M_0$, we can choose $\tilde{\si}=\si+\delta e$ invertible in $e\M e$ with $\tau_\M(\tilde{\si})\le 1+\epsilon$. Then, we have \[0\le y=\tilde{\si}^{-\frac{1}{2}} \cdot x\cdot \tilde{\si}^{-\frac{1}{2}}\le \la 1\pl, \pl x = \tilde{\si}^{\frac{1}{2}}\cdot y \cdot \tilde{\si}^{\frac{1}{2}}\pl.\]
Hence, we obtain \begin{align}\norm{x}{L_1(\M,\L_\infty(\N))}\le \inf\{\la \pl | \pl  x\le \la \si\ten 1\pl, \pl \si\in \M_0 \text{ density operator} \} \label{hmin}\end{align}
Then it suffices to show that $\la(x)$ equals the right hand side. Suppose $x\le \la\si\ten 1$ for some density operator $\si\in L_1(\M)$. Without losing generality, we can assume that $\si$ is invertible on $e\M e$. By definition, for any positive $y\in \M
\overline{\ten} \N_0$,
\[\la\tau((\si\ten 1)y)\ge \tau(x y) \pl.\]
This implies $\norm{\si^{-\frac{1}{2}}x \si^{-\frac{1}{2}}}{}\le \la+\epsilon$. We modify $\si$ to a density operator $\tilde{\si}\in \M$ such that $\tilde{\si}=\si e_{[0,k)}+k e_{[k,\infty)}$ where $e_{[0,k]}$ is the spectral projection of $\si$ for the interval $[0,k]$.
Note that for any $z\geq 0$,
\[ (\min\{z,k\})^{-1} -z^{-1}=(z-\min\{z,k\})/z(\min\{z,k\})=\begin{cases}
                                                         0, & \mbox{if } z\le k \\
                                                         \frac{z-k}{zk}, &  \mbox{if } z>k.
                                                       \end{cases}\]
Then by functional calculus, $\norm{\tilde{\si}^{-1}-\si^{-1}}{\infty}\le \frac{1}{k}$. Therefore,
\begin{align*}\norm{\tilde{\si}^{-\frac{1}{2}}x \tilde{\si}^{-\frac{1}{2}}}{}=&\norm{x ^{\frac{1}{2}}\tilde{\si}^{-1}x ^{\frac{1}{2}}}{}
=\norm{x ^{\frac{1}{2}}\si^{-1}x ^{\frac{1}{2}}}{}
+\norm{x ^{\frac{1}{2}}(\tilde{\si}^{-1}-\si^{-1})x ^{\frac{1}{2}}}{}\le (\la+\epsilon)+\frac{1}{k}\norm{x}{\infty}.
\end{align*}
By choosing $k$ large enough, we have
\[x \le (\la+2\epsilon)\tilde{\si}\ten 1\pl.\]
where $\norm{\tilde{\si}}{\infty}\le k$ hence belongs to $\M_0$. This proves ii) for positive $x\in \M_0\ten \N$. For a general positive element $x\in L_1(\mathcal M, L_\infty(\mathcal N))$, let $x_n$ be a  sequence of positive operators in $\M_0\ten \N$ such that $\norm{x_n-x}{L_1(\M,L_\infty(\N))}\to 0$
Then by i), we know \[\la(x)=\lim_{n}\la(x_n)=\lim_n \norm{x_n}{L_1(\M,L_\infty(\N))}=\norm{x}{L_1(\M,L_\infty(\N))}\pl,\]
which completes the proof.
\end{proof}

The next lemma shows that $\la(\rho)$ is attained by the duality \[L_\infty(\M,L_1(\N)) \subset \Big(L_1(\M,L_\infty(\N))\Big)^*\pl.\]
\begin{lemma}\label{sup}Let $\rho\in L_1(\M,L_\infty(\N))$ be self-adjoint. Then
\begin{align*}\la(\rho)=\sup \{\tau(x\rho)\pl | \pl x\in \M\overline{\ten} \N_0, x\ge 0,   \norm{x}{L_\infty(\M,L_1(\N))}=1 \}.
\end{align*}
In particular, if $\rho\in L_1(\M,L_\infty(\N))$ is positive, then
\begin{align*}\norm{\rho}{L_1(\M,L_\infty(\N))}
=&\sup \{\tau(x\rho)\pl | \pl x\in \M\overline{\ten} \N_0, x\ge 0,   \norm{x}{L_\infty(\M,L_1(\N))}=1 \}.
\end{align*}
\end{lemma}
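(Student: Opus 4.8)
The plan is to read the asserted formula as a no-duality-gap statement for the conic program defining $\la(\rho)$, proved by combining an elementary weak-duality bound with a minimax argument over a regularized, weak$^*$-compact feasible set. It is convenient to use the equivalent description
\[\la(\rho)=\inf\{\tau_\M(\omega)\mid \omega\in L_1(\M)_+,\ \omega\ten 1\ge\rho\},\]
obtained from the density-operator definition by writing $\omega=\la\,\si$; in particular $\la(\rho)\ge 0$, with $\la(\rho)=0$ when $\rho\le 0$. Two identities drive the computation. Writing $E(x):=(\id_\M\ten\tau_\N)(x)\in\M$ for the partial trace of $x\in\M\overline{\ten}\N_0$, one has $\tau(x(\omega\ten 1))=\tau_\M(\omega E(x))$ for $\omega\in L_1(\M)$, and, by Lemma \ref{positive} ii), for positive $x$,
\[\norm{x}{L_\infty(\M,L_1(\N))}=\sup_{\norm{a}{2}=1}\tau(a\cdot x\cdot a^*)=\sup_{\norm{a}{2}=1}\tau_\M(a^*a\,E(x))=\norm{E(x)}{\infty}.\]
Thus $\norm{x}{L_\infty(\M,L_1(\N))}\le 1$ is the same as $E(x)\le 1$; since scaling a positive $x$ up to $\norm{x}{L_\infty(\M,L_1(\N))}=1$ only increases $\tau(x\rho)$ once it is nonnegative, the supremum over $\norm{x}{L_\infty(\M,L_1(\N))}=1$ agrees with the supremum over $\norm{x}{L_\infty(\M,L_1(\N))}\le 1$, which I use below. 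The ``in particular'' statement for positive $\rho$ follows from the main identity together with Lemma \ref{extend} ii).

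For the inequality $\sup\le\la(\rho)$ (weak duality), take $x\ge 0$ with $\norm{x}{L_\infty(\M,L_1(\N))}\le 1$ and any feasible $\omega$. Since $x$ and $\omega\ten 1-\rho$ are positive, they pair nonnegatively under $\tau$, so
\[\tau(x\rho)\le\tau(x(\omega\ten 1))=\tau_\M(\omega E(x))\le\norm{E(x)}{\infty}\,\tau_\M(\omega)\le\tau_\M(\omega).\]
Taking the infimum over $\omega$ and the supremum over $x$ gives $\sup\le\la(\rho)$.

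The reverse inequality carries the content. I would first reduce to $\rho\in e\M e\ten q\N q$ with $\tau_\M(e),\tau_\N(q)<\infty$, in which case the dominating operator may be taken in $e\M e$, so that $\la(\rho)=\inf\{\tau_\M(\omega)\mid \omega\in L_1(e\M e)_+,\ \omega\ten 1\ge\rho\}$. The decisive step is to confine $\omega$ to the order interval $\{0\le\omega\le ke\}$, which lives in the \emph{dual} space $e\M e$ and is therefore weak$^*$-compact; the truncation argument in the proof of Lemma \ref{extend} ii) shows that the corresponding infima $\la_k$ decrease to $\la(\rho)$. On this weak$^*$-compact convex set the function $(\omega,x)\mapsto\tau_\M(\omega)-\tau(x(\omega\ten 1-\rho))$ is affine in each variable and weak$^*$-continuous in $\omega$, so Sion's minimax theorem lets me interchange $\inf_\omega\sup_{x\ge 0}$ into $\sup_{x\ge 0}\inf_\omega$, yielding the exact-penalty expression
\[\la_k=\sup_{x\ge 0}\Big(\tau(x\rho)-k\,\tau_\M\big((E(x)-e)_+\big)\Big).\]

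Passing to the limit $k\to\infty$ is where I expect the main obstacle to lie. The penalty term forces $E(x)\le e$, i.e. $\norm{x}{L_\infty(\M,L_1(\N))}\le 1$, so formally $\lim_k\la_k=\sup\{\tau(x\rho)\mid x\ge 0,\ E(x)\le e\}$; matching this with $\lim_k\la_k=\la(\rho)$ from the primal side gives the identity. Making the exact-penalty passage rigorous requires controlling the $L_\infty(\M,L_1(\N))$-norms of near-optimal $x_k$, so that a weak$^*$-cluster point $x_*$ with $E(x_*)\le e$ and $\tau(x_*\rho)=\la(\rho)$ can be extracted; the non-reflexivity of $L_1(\M)$ is precisely what makes the unregularized feasible set for $\omega$ fail to be weakly compact, which is why placing $\omega$ in the dual $e\M e$ (rather than the predual $L_1(e\M e)$) via the order-interval regularization is essential. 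Finally I would remove the compressions $e,q$ by approximating a general $\rho\in L_1(\M,L_\infty(\N))$, using the continuity estimate $|\la(x_1)-\la(x_2)|\le\norm{x_1-x_2}{L_1(\M,L_\infty(\N))}$ from Lemma \ref{extend} on the primal side and the analogous continuity of the supremum on the dual side.
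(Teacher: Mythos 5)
Your weak-duality half is fine, and the identity $\norm{x}{L_\infty(\M,L_1(\N))}=\norm{(\id_\M\ten\tau_\N)(x)}{\infty}$ for positive $x$ is a correct and useful reading of Lemma \ref{positive} ii). But the strong-duality half, which is the entire content of the lemma, rests on two steps that do not go through as described. First, the exact-penalty limit $k\to\infty$, which you yourself flag as the main obstacle, is a genuine gap and not a technicality: the penalty $k\,\tau_\M\big((E(x)-e)_+\big)$ does not control $\norm{E(x)}{\infty}$, because a near-optimal $x_k$ may have $E(x_k)$ exceed $e$ by an arbitrarily large amount on a spectral projection of arbitrarily small trace; hence the $x_k$ need not be bounded in $L_\infty(\M,L_1(\N))$ and there is no compactness from which to extract the cluster point $x_*$ your argument requires. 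The paper avoids this issue entirely: it separates the convex sets $\F$ and $\F_-$ inside $C(B,\mathbb{R})$, where $B$ is the weak$^*$-compact set of positive contractions of $\M\overline{\ten}\N$ in the \emph{operator} norm, and the barycenter $x_0=\int_B x\,d\mu(x)$ of the representing measure automatically lies in $B$; only at the very end is $x_0$ renormalized in the $L_\infty(\M,L_1(\N))$-norm. No limit in a penalty parameter ever occurs.

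Second, your reduction to $\rho\in e\M e\ten q\N q$ and its removal ``by continuity on the primal side'' fails on the $\N$ leg: the estimate $|\la(x_1)-\la(x_2)|\le\norm{x_1-x_2}{L_1(\M,L_\infty(\N))}$ is useless here because $(1\ten q)\rho(1\ten q)$ does not converge to $\rho$ in the $L_1(\M,L_\infty(\N))$-norm, which is an operator norm in the $\N$ variable; already $\norm{a\ten(1_\N-q)}{L_1(\M,L_\infty(\N))}=\norm{a}{1}$ for every finite-trace projection $q\neq 1$. This is exactly why the paper, after settling finite $\N$, runs a separate argument for semifinite $\N$: it shows the truncated values $\la_p$ increase to $\la(\rho)$ by taking a weak$^*$-limit point $\xi$ of the states $\xi_p(\cdot)=\tau_\M(\si_p\,\cdot)$ and discarding the singular part of $\xi$ via a normality argument, thereby producing an honest dominating density operator. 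Neither step is cosmetic; each is where the real work of the lemma lives. A smaller point: your opening reformulation $\la(\rho)=\inf\{\tau_\M(\omega)\mid\omega\ge 0,\ \omega\ten 1\ge\rho\}$ is not equivalent to the paper's definition when $\la(\rho)<0$, since the paper permits negative $\la$; this is harmless for positive $\rho$, which is the case actually used later, but it silently changes the statement for general self-adjoint $\rho$.
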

\begin{proof} By Lemma \ref{extend}, it suffices to consider $\rho\in \M_0\ten \N$. Let $\rho\in e\M e\ten \N$ for some $\tau_\M(e)<\infty$. We can assume $\M$ is finite by restricting to $e\M e$. Let us first consider the case that $\N$ is finite. We use a standard Grothendieck-Pietsch
separation argument.  Let $\la$ be a positive number such that $\la<\la(\rho)$.
We know from \eqref{hmin} that for any density operator $\si\in \M_0$,
$\la (1\ten \si)-\rho$ is not positive and hence has nontrivial negative part. Then there exists a positive $x\in L_\infty(\M
\overline{\ten} \N )$ such that $\norm{x}{\infty}=1$ and
\[ \tau(\rho x )-\la\tau((\si\ten 1) x) >0. \]
Consider the weak$^*$ compact subset
\[B=\{x\in  \M\overline{\ten} \N| \norm{x}{\infty}\le 1, x\ge 0\}.\]
For each positive operator $\si\in \M_0$ with $\tau_\M(\si)\le 1$, we define  the function $f_\si: B\to \mathbb R$ as follows (we suppress the dependence on $\rho$ since $\rho$ is fixed)
\[ f_\si(x)=\tau (\rho x)-\la \tau((\si\ten 1) x)\pl, x \in B.\]
These $f_\si$ are continuous with respect to weak$^*$ topology on $B$ because $\N$ is finite and both $\si\ten 1$ and $\rho$ are in $L_1(\M\overline{\ten} \N)$. Denote $C(B,\mathbb{R})$ as the space $w^*$-continuous real function on $B$. We define two subsets
\begin{align*}&\mathcal{F}=\{ f_\si\in C(B,\mathbb{R}) \pl | \pl \si\in \M_0, \si\ge 0, \tau_\M(\si)\le 1\pl\}\\
&\F_-=\{ f\in C(B,\mathbb{R}) \pl |\pl  \sup f< 0\}.
\end{align*}
Both $\F$ and $\F_-$ are convex sets and $\F_-$ is open. Moreover, $\F$ and $\F_-$ are disjoint because for each $f_\si\in \F$, $\sup_{x} f_\si(x)>0$. Then by the Hahn-Banach Theorem, there exists a norm-one linear function $\psi: C(B,\mathbb{R})\to \mathbb{R}$ such that for any $f_-\in\F_-$ and $f_\si\in \F$, there exists a real number $r$ such that
\begin{align*}
\phi(f_-)<r \le \phi(f_\si)\pl.
\end{align*}
Because $\F_-$ is a cone, $r\ge 0$. Similarly, $r\le 0$ because for any $0<\delta<1$, $\delta\F\subset \F$. Then $r=0$ and $\phi$ is a positive linear functional because $\phi(f_-)< 0 $ for any $f_-\in \F_-$. By the Riesz Representation Theorem, $\phi$ is given by a Borel probablity measure $\mu$ on $B$. Namely,
\[\phi(f)=\int_B f(x)\mu(x)\pl.\]
Denote $x_0=\int_B x d\mu(x)$. We have for any positive operator $\si\in \M_0$ with  $\tau_\M(\si)\le 1$, that
\begin{align*} \phi(f_\si)= \int_B f(x)d\mu(x)&=\int_B \tau(\rho x)-\la \tau((\si\ten 1) x)d\mu(x)
=\tau(\rho x_0)-\la \tau((\si\ten 1) x_0)\ge 0.
\end{align*}
By Lemma \ref{positive},
\[ \tau ( \rho x_0)\ge \la\sup \{ \tau((\si\ten 1) x_0)\pl |  \si\in \M_0 , \pl  \tau_\M(\si_0)\le 1  , \si\ge 0 \}=\la \norm{x_0}{L_\infty(\M,L_1(\N))}.\]
Normalizing $\tilde{x}_0=\norm{x_0}{L_\infty(\M,L_1(\N ))}^{-1}x_0$, we have $\tau(\rho\tilde{x}_0)\ge \la$. This proves the case for finite $\N$. For semifinite $\N$, we define for each projection $p\in \N$ with $\tau_\N(p)<\infty$,
\[\la_p=\inf\{\la \pl | \pl (1\ten p)\rho(1\ten p)\le \la \si \ten p \pl\text{for some density operator}\pl \si\in \M_0  \}\pl.\]
For two projections $p_1\le p_2$,  we have $\la_{p_1}\le \la_{p_2}$. Thus $\la_p$ is monotone non-decreasing over $p$ for the natural ordering. Based on the finite case, it suffices to show that
$\lim_{p} \la_p\ge \la(\rho)$. Write $\la_1=\lim_{p} \la_p$. Given $\epsilon>0$, for $p$ large enough there exists a density operator $\si_p\in \M_0$ such that
\begin{align*}(1\ten p)\rho(1\ten p)\le (\la_1+\epsilon) \si_p \ten p\pl.\end{align*}
Let $\psi_{p}:\M\overline{\ten} \N \to \mathbb{C}$ be the positive normal linear functional $\psi_p(x)=\tau((1\ten p)\rho(1\ten p)x)$ and let $\psi$ be the normal weight $\psi(x)=\tau(\rho x)$. Let $\xi_p:\M\to \mathbb{C}$
be the normal state $\xi_p(y)=\tau_\M(\si_p y)$. Let $\xi$ be a weak$^*$-limit point of $\xi_p$ in $\M^*$. Then $\xi$ is a state on $\M$ and it decomposes into a normal part and a singular part $\xi=\xi_n+\xi_s$. For any positive $x\in (\M\ten \N_0)_+$
\[\psi_p(x)\le (\la_1+\epsilon)\lim_{p} \xi_p\ten \tau_\N(x)=(\la_1+\epsilon)\xi\ten \tau_\N(x)\pl.\]
By the normality of $\psi_p$, we have $\psi_p\le (\la_1+\epsilon)\xi\ten \tau_\N$ as normal states on $\M\overline{\ten}p\N p$.
For any positive $x\in (\M\ten \N_0)_+$, there exists a $p_x$ such that for $p\ge p_x$, $\psi(x)=\psi_{p}(x)$ and hence
\[\psi(x)=\psi_{p}(x)\le (\la_1+\epsilon)\xi_n\ten \tau_\N(x)\pl.\]
By normality, $\psi\le (\la_1+\epsilon)\xi_n\ten \tau_\N$ as weights.
Since $\xi_n$ is a sub-state (that is, a positive linear functional with norm $\leq 1$), $\xi_n(y)=\tau_\M(y\si)$ for some positive $\si\in L_1(\M)$ with $\tau_\M(\si)\le 1$. Then we have
\[\rho \le (\la_1+\epsilon)\si\ten 1\pl.\]Since $\epsilon$ is arbitrary, we complete the proof.
\end{proof}
This next lemma is an analogue  of the Choi matrix.
\begin{lemma}\label{map}
There is a contraction\begin{align*} &L_\infty(\M,L_1(\N))\longrightarrow CB(L_1(\M^{op}),L_1(\N))\pl,\\
& x \mapsto T_x\in CB(L_1(\M),L_1(\N))\pl , \pl\pl T_x(\rho^{op})=\tau_\M\ten \operatorname{id}_\N((\rho\ten 1)x).
\end{align*} Moreover, \begin{enumerate}
\item[i)]for any positive $x$, $\norm{x}{L_\infty(\M,L_1(\N))}=\norm{T_x}{cb}$.
\item[ii)]$T_x$ is completely positive if and only if $x$ is positive.
\item[iii)]$T_x$ is trace preserving if and only if $\operatorname{id}\ten\tau_\N(x)=1_{\M}$.
\item[iv)] for $S\in CB(L_1(\N),L_1(\N))$, $S\circ T_x=T_{\operatorname{id}\ten S(x)}$.
\item[v)] for any finite rank $T:L_1(\M^{op})\to L_1(\N)$, $T=T_x$ for some $x\in \M\ten L_1(\N)$.
\end{enumerate}
\end{lemma}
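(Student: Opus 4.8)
The plan is to treat $x\mapsto T_x$ as a noncommutative Choi--Jamio\l{}kowski correspondence, with $x$ playing the role of the Choi operator and $T_x$ the associated channel, and to read off all five properties from the defining partial-trace formula together with the duality results already recorded. First I would settle well-definedness. For $x\in\M\overline{\ten}q\N q$ with $\tau_\N(q)<\infty$ and $\rho\in L_1(\M)$, the product $(\rho\ten 1)x$ lies in $L_1(\M\overline{\ten}q\N q)$, so $T_x(\rho^{op})=(\tau_\M\ten\id_\N)((\rho\ten 1)x)\in L_1(q\N q)\subset L_1(\N)$ with $\norm{T_x(\rho^{op})}{1}\le\norm{\rho}{1}\norm{x}{\infty}$. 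It is convenient to record the Banach-space adjoint $T_x^*:\N\to\M$, $T_x^*(b)=(\id_\M\ten\tau_\N)(x(1\ten b))$, characterized by $\langle T_x(\rho^{op}),b\rangle=\tau((\rho\ten b)x)$ for $\rho\in L_1(\M)$, $b\in\N$, where $\tau=\tau_\M\ten\tau_\N$. Since taking adjoints is a complete isometry, $\norm{T_x}{cb}=\norm{T_x^*}{cb}$, and this pairing is what I use throughout.

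The heart of the lemma is the contraction $\norm{T_x}{cb}\le\norm{x}{L_\infty(\M,L_1(\N))}$, and this is where I expect the real work. My plan is to reduce it to a comparison of two tensor norms. Using that the canonical embedding $L_1(\N)\hookrightarrow\N^*$ is completely isometric together with the duality $(L_1(\M^{op})\hten\N)^*\cong CB(L_1(\M^{op}),\N^*)$ from \eqref{pro}, the map $T_x$ is identified with the functional $w\mapsto\tau(wx)$ on $L_1(\M^{op})\hten\N$, so that $\norm{T_x}{cb}=\sup\{|\tau(wx)|:\norm{w}{L_1(\M)\hten\N}\le 1\}$. On the other hand, the excerpt gives $\norm{x}{L_\infty(\M,L_1(\N))}=\sup\{|\tau(zx)|:\norm{z}{L_1(\M,L_\infty(\N))}\le 1\}$. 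Hence the contraction is equivalent to the norm domination $\norm{z}{L_1(\M,L_\infty(\N))}\le\norm{z}{L_1(\M)\hten\N}$, i.e. to the statement that the canonical map $L_1(\M)\hten\N\to L_1(\M,L_\infty(\N))$ is a complete contraction. This in turn follows because the operator space projective tensor norm is the maximal operator space cross norm, while $(\rho,b)\mapsto\rho\ten b$ is completely contractive into $L_1(\M,L_\infty(\N))$ by the H\"older-type factorization underlying its definition (one checks $\norm{\rho\ten b}{L_1(\M,L_\infty(\N))}\le\norm{\rho}{1}\norm{b}{\infty}$ and its matrix amplifications). Verifying this domination carefully, with the correct operator space structures on the preduals and the bookkeeping of the opposite-algebra conventions, is the main obstacle; it is exactly the unconditional half of the equivalence between $H_{min}$ and the projective tensor norm, the reverse half being where injectivity enters later.

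Finally I would dispatch the five properties. For ii), positivity of $x$ gives complete positivity of $T_x$ directly, since $(\tau_\M\ten\id)((\rho^{1/2}\ten 1)x(\rho^{1/2}\ten 1))\ge 0$ for $\rho\ge 0$ and the argument amplifies; the converse is the order-preserving half of the Effros--Ruan isomorphism \eqref{er}, after matching $T_x$ with the Choi-type correspondence on $\M\overline{\ten}q\N q$. Property i) then follows without invoking the full contraction: for positive $x$ the map $T_x^*:\N\to\M$ is completely positive, so $\norm{T_x}{cb}=\norm{T_x^*}{cb}=\norm{T_x^*(1_\N)}{\M}=\norm{(\id\ten\tau_\N)(x)}{\M}$, and Lemma \ref{positive}(ii) identifies $\norm{(\id\ten\tau_\N)(x)}{\M}=\sup_{\norm{a}{2}=1}\tau_\M\big(a^*a\,(\id\ten\tau_\N)(x)\big)$ with $\norm{x}{L_\infty(\M,L_1(\N))}$.

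Property iii) is the computation $\tau_\N(T_x(\rho^{op}))=\tau_\M(\rho\,(\id\ten\tau_\N)(x))$, which equals $\tau_\M(\rho)$ for all $\rho$ exactly when $(\id\ten\tau_\N)(x)=1_\M$. Property iv) holds because $S$ acts only on the $\N$-leg and therefore commutes with both the partial trace $\tau_\M\ten\id$ and left multiplication by $\rho\ten 1$, giving $S\circ T_x=T_{\id\ten S(x)}$. Property v) is immediate: a finite-rank $T=\sum_{k}\langle\,\cdot\,,c_k\rangle d_k$ with $c_k\in\M$ and $d_k\in L_1(\N)$ equals $T_x$ for $x=\sum_k c_k\ten d_k\in\M\ten L_1(\N)$, as one verifies by substituting into the defining formula.
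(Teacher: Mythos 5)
Your architecture is reasonable and parts i), iii), iv), v) are essentially sound (your derivation of i) via $\norm{T_x}{cb}=\norm{T_x^*}{cb}=\norm{T_x^*(1_\N)}{\M}$ for CP maps, combined with Lemma \ref{positive} ii), is in fact a clean alternative to the paper's argument, which instead computes $\sup_\rho\norm{T_x(\rho^{op})}{1}$ directly). But the central claim --- the contraction $\norm{T_x}{cb}\le\norm{x}{L_\infty(\M,L_1(\N))}$ --- has a genuine gap. You reduce it to the norm domination $\norm{z}{L_1(\M,L_\infty(\N))}\le\norm{z}{L_1(\M)\hten\N}$ and assert this follows from maximality of the operator space projective tensor norm together with the scalar cross-norm estimate $\norm{\rho\ten b}{L_1(\M,L_\infty(\N))}\le\norm{\rho}{1}\norm{b}{\infty}$ ``and its matrix amplifications.'' The scalar estimate only yields domination by the \emph{Banach} projective norm $\ten_\pi$, which is \emph{larger} than $\hten$, so the inequality comes out in the wrong direction; to dominate by the smaller norm $\hten$ you must verify that the bilinear map $(\rho,b)\mapsto\rho\ten b$ is \emph{jointly completely} contractive, i.e.\ you need matrix-level estimates on $L_1(\M,L_\infty(\N))$ --- and that verification is precisely the content of the lemma, not a known input. (Note also that in the paper's logical order the domination $\norm{\cdot}{L_1(\M,L_\infty(\N))}\le\norm{\cdot}{L_1(\M)\hten\N}$ is \emph{deduced from} Lemma \ref{map} in the paragraph following its proof, so any appeal to it here must be independent.) The paper does this work explicitly: it identifies $\id_{M_n}\ten T_x=T_{\phi\ten x}$ with $\phi$ the Choi matrix of $\id_{M_n}$, decomposes $a,b\in S_2^n(L_2(\M))$ into orthogonal elementary tensors, estimates $\norm{\sum_{k,l}(\omega_k\cdot\phi\cdot\si_l)\ten(a_k\cdot x\cdot b_l)}{1}$ term by term, and invokes \cite[Lemma 1.2]{pisier93} to pass from $S_1^n$-amplifications to the cb norm. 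Some version of that computation is unavoidable in your approach too; you have relocated the difficulty rather than resolved it.

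Two smaller points. First, the converse of ii): appealing to the order-preserving half of the Effros--Ruan isomorphism \eqref{er} is not immediate, since \eqref{er} identifies $\N\overline{\ten}\M$ with $CB(\N_*,\M)$, whereas here $x$ lives in $L_\infty(\M,L_1(\N))$ and $T_x$ maps into $L_1(\N)$; the paper instead argues directly, producing a vector $h=\sum_j a_j\ten b_j$ with $\lan h,xh\ran\ngeq 0$ and exhibiting a positive $\omega\in S_1^n(L_1(\M^{op}))$ for which $\id_n\ten T_x(\omega)$ fails to be positive. Second, your well-definedness bound $\norm{T_x(\rho^{op})}{1}\le\norm{\rho}{1}\norm{x}{\infty}$ is off by a factor $\tau_\N(q)$ (since $(\rho\ten 1)x=(\rho\ten q)x$ and $\norm{\rho\ten q}{1}=\norm{\rho}{1}\tau_\N(q)$); harmless for well-definedness, but the paper's route via $\rho=ba$ with $\norm{a}{2}=\norm{b}{2}=1$ and $\norm{a\cdot x\cdot b}{1}\le\norm{x}{L_\infty(\M,L_1(\N))}$ gives the correct norm control in one step.
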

\begin{proof}By a density argument, it suffices to discuss $x\in \M\overline{\ten} p\N p$ with $\tau_\N(p)<\infty$. Given $\rho \in L_1(\M )$,
$(\rho\ten 1_\N) x=(\rho\ten p) x \in L_1(\M\overline{\ten} p\N p)$ hence the map $T_x(\rho)=\tau_\M\ten \operatorname{id}_\N((\rho\ten 1_\N)x)\in L_1(\N)$ is well defined. For $\norm{\rho^{op}}{L_1(\M^{op})}=1$, we have $\rho=ba$ for some ${\norm{a}{2}=\norm{b}{2}=1}$. Note that
$\tau_\M\ten \operatorname{id}_\N((ba\ten 1_\N)x)= \tau_\M\ten \operatorname{id}_\N((a\ten 1)  x( b\ten 1))$. Then
\[ \norm{T_x(\rho^{op})}{L_1(\N)}\le \norm{a\cdot x \cdot b}{L_1(\M\overline{\ten}\N)}\le \norm{x}{L_\infty(\M,L_1(\N))}\pl.\]
 Let $e_{ij}$ be the matrix units in $M_n$ and $S^n_2$ be the Schatten $2$-class. For the completely bounded norm, we first note that
$\operatorname{id}_{M_n}\ten T_x= T_{\phi\ten x}: L_1(M_n(\M)^{op})\to L_1(M_n(\N))$ where $\phi=\sum_{i,j} e_{ij}\ten e_{ij}\in M_n\ten M_n$ and $\phi\ten x\in L_\infty( M_n\ten \M, L_1(M_n(\N)))$. Here $\phi$ is the Choi matrix for $\operatorname{id}:M_n\to M_n$. Given $\norm{a}{S_2^n(L_2(\M))}=\norm{b}{S_2^n(L_2(\M))}=1$, we can write
$a=\sum_{k}\mu_k\omega_k\ten a_k$ such that $\mu_k$ (resp. $a_k$) orthogonal in $S_2^n$ (resp. $L_l(\M)$) and $\norm{a_k}{2}=1, \sum_{k}\norm{\omega_k}{2}^2=1$ and similarly for $b=\sum_{l}\nu_l\si_l\ten b_l$. Then
\begin{align*}\operatorname{id}_{M_n}\ten T_x(ab)=T_{\phi\ten x}(ab)&=\sum_{k,l}\Big(\tr\ten \operatorname{id}_{M_n}((\omega_k\si_l\ten 1)\phi)\Big)\ten \Big(\tau_\M\ten \id_\N ((a_kb_l\ten 1) x)\Big)\\
&=\sum_{k,l}\Big(\tr\ten \operatorname{id}_{M_n}(\omega_k\cdot \phi \cdot \si_l)\Big)\ten \Big(\tau_\M\ten \id_\N (a_k\cdot x \cdot b_l)\Big)\\
&=\tr\ten id_{M_n}\ten\tau_\M\ten \id_\N \Big(\sum_{k,l} (\omega_k\cdot \phi \cdot \si_l)\ten (a_k\cdot x \cdot b_l)\Big)\pl.
\end{align*}
Using bracket notation,
\[\phi=\ket{h}\bra{h}\pl, \ket{h}=\sum_{i=1}\ket{i}\ket{i}\]
where $\{\ket{i}\}$ is the standard basis in $l_2^n$. We have
\[\norm{\omega_k\cdot \phi \cdot \si_l}{1}=\norm{\omega_k\ten 1 \ket{h}}{l_2}\norm{\si_l^*\ten 1 \ket{h}}{l_2}
=\norm{\omega_k}{2}\norm{\si_l}{2}
\pl.\]
Here $\norm{\cdot}{l_2}$ is the vector norm and \[\norm{\omega_k\ten 1 \ket{h}}{l_2}^2=\bra{k}\omega_k^*\omega_k\ten 1 \ket{h}=\tr(\omega_k^*\omega_k)=\norm{\omega_k}{2}\pl.\]
Therefore,
\begin{align*}\norm{\sum_{k,l} (\omega_k\cdot \phi \cdot \si_l)\ten (a_k\cdot x \cdot b_l)}{1} &\le \sum_{k,l}\norm{\omega_k\cdot \phi \cdot \si_l}{1}\norm{a_k\cdot x \cdot b_l}{1}
\\ &\le \sum_{k,l} \norm{\omega_k}{2}\norm{\si_l}{2} \norm{x}{L_\infty(\M,L_1(\N))}
\le \norm{x}{L_\infty(\M,L_1(\N))}
\end{align*}
By $\norm{\operatorname{id}_{M_n}\ten T_x(ab)}{1}\le \norm{\sum_{k,l} (\omega_k\cdot \phi \cdot \si_l)\ten (a_k\cdot x \cdot b_l)}{1}$, this implies \[\norm{\operatorname{id}_{M_n}\ten T_x:L_1(M_n(\M)^{op})\to L_1(M_n(\N))}\le \norm{x}{L_\infty(\M,L_1(\N))}\pl. \]
Then by $ L_1(M_n(\M)^{op}, \tr\ten \tau_\M)\cong S_1^n(L_1(\M^{op}))$ and \cite[Lemma 1.2]{pisier93}, we obtain
\begin{align*}\norm{T_x:L_1(\M^{op})\to L_1(\N)}{cb}&=\sup_n\norm{\operatorname{id}_n\ten T_x: S_1^n(L_1(\M^{op}))\to  S_1^n(L_1(\N))}{}
\\&\le \norm{\phi\ten x}{{L_\infty(M_n(\M), L_1(M_n(\N)))}}=\norm{x}{{L_\infty(\M, L_1(\N))}}
\pl.
\end{align*}
Now suppose $x$ is positive. For a density operator $\rho\in L_1(\M^{op})$,
\[T_x(\rho^{op})=\tau_\M\ten \operatorname{id}_\N((\rho\ten 1) x)=\tau_\M\ten \operatorname{id}_\N(\rho^{\frac{1}{2}}\cdot x\cdot  \rho^{\frac{1}{2}})\ge 0\]
Applying the same argument for $\phi\ten x$, we know $T_x$ is completely positive. Then taking the supremum over all density operators $\rho$,
\[\sup_{\rho}\norm{T_x(\rho^{op})}{1}=\sup_{\rho}\tau_\M\ten \tau_\N(\rho^{\frac{1}{2}}\cdot x \cdot \rho^{\frac{1}{2}})=\norm{x}{L_\infty(\M,L_1(\N))}\pl.\]
Thus for positive $x$, we find $\norm{T_x}{cb}=\norm{x}{L_\infty(\M,L_1(\N))}=\norm{T_x}{}\le \norm{T_x}{cb}$, which proves i). For ii), we note that the ``if'' statement follows by the construction of $T_x$. To prove the ``only if'' statement, we conversely suppose $x$ is not positive and  we show that  $T_x$ is not completely positive. There exists a vector ${h}=\sum_{j=1}^n a_j\ten b_j\in L_2(\M)\ten_2 L_2(\N)$ such that $a_j\in\M_0, b_j\in \N_0$, $\langle h, x{h}\rangle\ngeq 0$ (that is, the inner product is either not real or is negative). This means
\begin{align*}
\langle h, x{h}\rangle &=\sum_{i,j=1}^n\tau_\M\ten\tau_\N( (a_i^*\ten b_i^*)x (a_j\ten b_j))\\
&= \tau_\N\Big(\sum_{i,j=1}^n b_i^*\tau_\M\ten \operatorname{id}_\N\big( (a_i^*\ten 1)x (a_j\ten 1)\big)b_j\Big)\ngeq 0\pl.
\end{align*}
Thus, $\Big(\tau_\M\ten \operatorname{id}_\N\big( (a_i^*\ten 1)x (a_j\ten 1)\big)\Big)_{i,j=1}^n$ is not positive in $S_1^n(L_1(\N))$.
Note that $\omega^{op}=\sum_{i,j=1}^{n}e_{ij}\ten (a_i^*)^{op}(a_j)^{op}=\sum_{i,j=1}^{n}e_{ij}\ten (a_ja_i^*)^{op}$ is positive in $S_1^n(L_1(\M^{op}))$. Then $T_x$ is not completely positive because
\begin{align*}id_n\ten T_x(\omega)=&\sum_{i,j=1}^n e_{ij}\ten \Big(\tau_\M\ten \operatorname{id}_\N( (a_ja_i^*\ten 1)x)\Big) \\=&\sum_{i,j}e_{ij}\ten\Big(\tau_\M\ten \operatorname{id}_\N( (a_i^*\ten 1)x (a_j\ten 1))\Big)\ngeq 0\pl.\end{align*}
This proves ii). For any $\rho\in L_1(\M)$, by Fubini's theorem,
\begin{align*} \tau_\N (T_x(\rho^{op}))=&\tau_\N\Big(\tau_\M\ten \operatorname{id}_\N  ((\rho\ten 1) x)\Big)=\tau_\M \Big(\rho \operatorname{id}_\M \ten\tau_\N(x)\Big) \pl.\end{align*}
Thus $T_x$ is trace preserving if and only $\operatorname{id}_\M \ten\tau_\N(x)=1$. This verifies iii). For iv),
let $S\in CB(L_1(\N), L_1(\N))$. For $ \rho\in L_1(\M)$,
\begin{align*}S\circ T_x(\rho^{op})=&S\Big(\tau_\M\ten \operatorname{id}_\N  ((\rho\ten 1) x)\Big)
 \\&=\tau_\M\ten \operatorname{id}_\N \Big((\rho\ten 1) \operatorname{id}\ten S(x) \Big)
\\& = T_{\operatorname{id}\ten S(x)}(\rho^{op})\pl.
\end{align*}
%Thus, $T^\dag_x(y^{op})=\operatorname{id}_\M\ten\tau_\N (x (1\ten y))$ which is well defined because $z^{op}\mapsto \tau_\N(zy)$ is a complete contraction from $L_1(\N^{op})$ to $\C$. Then for a completely bounded map $S\in CB(\N^{op}, \N^{op})$,
%\[T^\dag_x\circ S(y^{op})= \operatorname{id}_\M\ten\tau_\N (x (1\ten S(y)))=\operatorname{id}_\M\ten\tau_\N (\operatorname{id}\ten S^\dag(x) (1\ten y))\]
Finally, for v), let $T$ be a finite rank map from $L_1(\M^{op})$ to $L_1(\N)$. Then there exists finite $y_j\in \M$ and $z_j\in L_1(\N)$ such that $T(\rho^{op})=\sum_{j=1}^n\tau_\M(\rho y_j)z_j$. Then $T=T_x$ for $x=\sum_{j=1}^n y_j\ten z_j$ which belongs to $\M\ten L_1(\N)$. That  completes the proof.
\end{proof}

The above lemma gives a contraction
\[L_\infty(\M,L_1(\N))\to CB(L_1(\M^{op}),L_1(\N))\subset CB(\N^{op},\M)\pl.\]
Note that $CB(\N^{op},\M)_*=L_1(\M^{op})\widehat{\ten}L_\infty(\N^{op})$. The pairings for an algebraic tensor $\rho^{op}=\sum_{j=1}^n y_j^{op}\ten z_j^{op}\in L_1(\M)^{op}\ten \N^{op}$ to $L_\infty(\M,L_1(\N))$ and to $CB(\N^{op},\M)$ coincide,
\begin{align*}
\lan x,\rho^{op} \ran_{(L_\infty(\M,L_1(\N)), L_1(\M^{op},L_\infty(\N^{op}))}=&\tau_\M\ten\tau_\N (x\sum_{j=1}^n y_j\ten z_j)
\\=&\tau_\N\Big(\sum_{j} z_j\tau_\M\ten \operatorname{id}((y_j\ten 1)x)\Big)
\\=&\tau_\N\Big(\sum_{j} z_jT_x(y_j)\Big)=\tau_\N\Big(\sum_{j} T_x^\dag(z_j) y_j\Big)
\\=&\lan T_x, \rho^{op}\ran_{(CB(\N^{op},\M), L_1(\M^{op})\widehat{\ten}L_\infty(\N^{op}))}.
\end{align*}
Then, for an algebraic tensor $x=\sum_{j=1}^n y_j\ten z_j\in L_1(\M)\ten \N$, we have
\[\norm{x}{L_1(\M,L_\infty(\N))}\le \norm{x}{L_1(\M)\widehat{\ten}\N}.\]
It was proved in \cite[Theorem 3.4]{pisier93} that for hyperfinite $\M$ (i.e. $\M=\overline{(\cup_\al \M_\al )^{w^*}}$, where the union is of an increasing net of finite-dimensional subalgebras $\M_\al$, we have the isometric isomorphism
\begin{align}L_1(\M,L_\infty(\N))\cong L_1(\M)\widehat{\ten}\N.\label{eq}\end{align}
We shall show that this isomorphism is characterized by the injectivity of $\M$. Recall that a von Neumann algebra $\M$ is \emph{injective} if  there exists an embedding $\M\subset B(H)$ and a completely positive projection $P:B(H)\to \M$ with $\norm{P}{}=1$. An equivalent condition is the weak$^*$ completely positive approximation property ( \emph{weak$^*$-CPAP}). A von Neumann algebra $\M$ has weak$^*$-CPAP if there exists a net of normal finite rank completely positive maps $\Phi_\al$ such that for any $x\in \M$, $\Phi_\al(x)\to x$ in the weak$^*$ topology. In general, hyperfinite implies  injective. The converse (say, when $\M\subset B(H)$ on a separable Hilbert space $H$) is a celebrated result of Connes \cite{connes76}. We refer to \cite{pisieros} for more information about these properties.

The next theorem is a dual form of Haagerup's characterization of injectivity by decomposability \cite{haagerup84}. It suggests that the conditional min entropy connects to the projective tensor norm if and only if $\M$ is injective.
\begin{theorem}\label{injective}Let $\M,\N$ be semi-finite von Neumann algebras. Suppose $\N$ is infinite dimensional. The following are equivalent \begin{enumerate}
\item[i)] $\M$ is injective.
\item[ii)]
$L_1(\M,L_\infty(\N))\cong L_1(\M)\widehat{\ten}\N$ isomorphically
\item[iii)]
$L_1(\M,L_\infty(\N))\cong L_1(\M)\widehat{\ten}\N$ isometrically
\end{enumerate}In particular, $L_1(\M,L_\infty(\M^{op}))\cong L_1(\M)\widehat{\ten}\M^{op}$ if and only if $\M$ is injective.
\end{theorem}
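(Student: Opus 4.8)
The plan is to prove the cycle (iii) $\Rightarrow$ (ii) $\Rightarrow$ (i) $\Rightarrow$ (iii); the first implication is immediate since an isometry is an isomorphism. Write $J\colon L_1(\M)\hten\N\to L_1(\M,L_\infty(\N))$ for the canonical map, which is the identity on the algebraic tensor $L_1(\M)\ten\N$ and is a contraction by the inequality $\norm{x}{L_1(\M,L_\infty(\N))}\le\norm{x}{L_1(\M)\hten\N}$ recorded just before \eqref{eq}. As $L_1(\M)\ten\N$ is dense on both sides, $J$ has dense range, so (ii) (resp.\ (iii)) amounts to $J$ being bounded below (resp.\ isometric).

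For (i) $\Rightarrow$ (iii) I would reduce to the matrix case. Injectivity gives the weak$^*$-CPAP: a net of normal finite-rank maps $\Phi_\al=\psi_\al\circ\phi_\al$, where $\phi_\al\colon\M\to M_{n_\al}$ and $\psi_\al\colon M_{n_\al}\to\M$ are normal unital completely positive, with $\Phi_\al\to\id_\M$ point-weak$^*$. Taking preduals, $\Phi_{\al,*}=\phi_{\al,*}\circ\psi_{\al,*}$ with $\psi_{\al,*}\colon L_1(\M)\to L_1(M_{n_\al})$ and $\phi_{\al,*}\colon L_1(M_{n_\al})\to L_1(\M)$. Two functorialities are needed: (a) $\psi_{\al,*}\ten\id_\N$ is a contraction $L_1(\M,L_\infty(\N))\to L_1(M_{n_\al},L_\infty(\N))$, the first-variable functoriality of the vector-valued $L_1$-space, which I would verify directly from the factorization definition of its norm together with a Stinespring dilation of $\psi_\al$; and (b) $\phi_{\al,*}\ten\id_\N$ is a complete contraction $L_1(M_{n_\al})\hten\N\to L_1(\M)\hten\N$, which is just functoriality of $\hten$ under the complete contraction $\phi_{\al,*}$. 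For matrix algebras Pisier's isomorphism \eqref{eq} (that is, \cite[Theorem 3.4]{pisier93}, applicable since $M_n$ is hyperfinite) identifies $L_1(M_{n_\al},L_\infty(\N))=L_1(M_{n_\al})\hten\N$ isometrically. Composing (a), this identification, and (b) exhibits $\Phi_{\al,*}\ten\id_\N$ as a contraction $L_1(\M,L_\infty(\N))\to L_1(\M)\hten\N$. Hence for $x\in L_1(\M)\ten\N$ we get $\norm{(\Phi_{\al,*}\ten\id)(x)}{L_1(\M)\hten\N}\le\norm{x}{L_1(\M,L_\infty(\N))}$, and letting $\al\to\infty$, using that $\Phi_{\al,*}\ten\id\to\id$ weakly on the algebraic tensor together with weak lower semicontinuity of the $\hten$-norm, yields $\norm{x}{L_1(\M)\hten\N}\le\norm{x}{L_1(\M,L_\infty(\N))}$, the reverse of the contraction. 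So $J$ is isometric.

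The implication (ii) $\Rightarrow$ (i) is the heart of the matter and is where I expect the real difficulty. The strategy is to dualize and invoke Haagerup's characterization of injectivity through decomposability \cite{haagerup84}. By Lemma \ref{sup} (and a self-adjointness reduction via Lemma \ref{positive}), $L_\infty(\M,L_1(\N))$ is a norming subspace of $(L_1(\M,L_\infty(\N)))^*$, so $\norm{x}{L_1(\M,L_\infty(\N))}$ is computed by duality against the unit ball of $L_\infty(\M,L_1(\N))$, while $\norm{x}{L_1(\M)\hten\N}$ is computed by duality against the unit ball of $CB(L_1(\M),\N^*)$. Under the Choi-type correspondence of Lemma \ref{map}, the unit ball of $L_\infty(\M,L_1(\N))$ is carried onto the decomposable maps of decomposable norm $\le 1$ inside $CB(\N^{op},\M)$, the $L_\infty(\M,L_1(\N))$-norm coinciding with the decomposable norm $\norm{\cdot}{dec}$ (positive elements correspond to completely positive maps with $\norm{\cdot}{cb}=\norm{\cdot}{L_\infty(\M,L_1(\N))}$, and the cone spans). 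Hypothesis (ii), namely $\norm{x}{L_1(\M)\hten\N}\le C\,\norm{x}{L_1(\M,L_\infty(\N))}$ for all $x$, translates by bipolarity into an inclusion of dual unit balls $B_{cb}\subseteq C\,\overline{B_{dec}}^{\,w^*}$. Since the decomposable balls are weak$^*$-compact and $\norm{\cdot}{dec}$ is weak$^*$-lower semicontinuous, this forces every completely bounded map $\N^{op}\to\M$ to be decomposable with $\norm{\cdot}{dec}\le C\,\norm{\cdot}{cb}$; by Haagerup's theorem this comparability of the decomposable and completely bounded norms holds exactly when $\M$ is injective. The hypothesis that $\N$ is infinite-dimensional enters precisely here: it makes the domain $\N^{op}$ large enough for such maps to witness non-injectivity, whereas for finite-dimensional $\N$ every completely bounded map is automatically decomposable and the test is vacuous. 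The main obstacles I anticipate are making the bipolar/weak$^*$-compactness upgrade from comparability on already-decomposable maps to ``every completely bounded map is decomposable'' rigorous, and matching the hypotheses of Haagerup's theorem to the present $L_1$-predual formulation. The final ``in particular'' assertion is the special case $\N=\M^{op}$.
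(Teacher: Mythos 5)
Your cycle of implications matches the paper's, but both nontrivial legs are run differently, and each contains a step that is asserted rather than proved. For (i)$\Rightarrow$(iii) the paper does not factor through matrix algebras: it argues by contradiction, picks a norm-one $S\in CB(\N,\M^{op})$ attaining $\norm{\rho}{L_1(\M)\hten\N}$, moves $\rho$ into $L_1(\M,L_\infty(\M^{op}))$ using only the \emph{second}-variable functoriality established right after the definition, and then pairs against the finite-rank normal UCP approximants $\Phi_\al=T_{x_\al}$ of $\id_\M$ via Lemma \ref{map} and the contractive duality with $L_\infty(\M^{op},L_1(\cdot))$. This deliberately avoids the step your route rests on, namely first-variable functoriality of $L_1(\,\cdot\,,L_\infty(\N))$ under preduals of UCP maps. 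That functoriality is true, but it is nowhere in the paper and your hint (``Stinespring dilation'') does not obviously deliver it: applying $\psi_{\al,*}\ten\id_\N$ to a factorization $a\cdot y\cdot b$ does not produce a factorization of the same shape. The workable argument instead goes through the order characterization of Lemma \ref{extend} (a CPTP predual preserves $x\le\la\,\si\ten 1$), which handles positive elements, followed by a $2\times 2$ self-adjointification for general ones; until that sublemma is supplied, your leg (i)$\Rightarrow$(iii) has a real hole, though a fixable one.

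The larger gap is in (ii)$\Rightarrow$(i). Both you and the paper terminate in Haagerup's Lemmas 2.3 and 2.5, but the paper gets there concretely: it reduces semifinite $\M$ to finite $\M$ (a step you omit, and which is needed to apply those lemmas as stated), uses the complemented copies $Q,R$ of $l_\infty^n$ inside the infinite-dimensional $\N$ to transfer the uniform isomorphism to $L_1(\M,l_\infty^n)\cong L_1(\M)\hten l_\infty^n$, dualizes without any bipolar issue (every map out of $l_\infty^n$ is automatically some $T_x$), and then evaluates both dual norms on the explicit elements $x_u=q\sum_j u_j\ten e_j$, computing $\norm{x_u}{L_\infty(\M,l_1^n)}\ge n$ directly to get $\norm{T_u}{cb}\ge cn$. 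Your version replaces this computation with three unproven identifications: ($\alpha$) that the $L_\infty(\M,L_1(\N))$-norm equals the decomposable norm of $T_x$ on all elements, not just on the positive cone where Lemma \ref{map} gives $\norm{T_x}{cb}=\norm{x}{L_\infty(\M,L_1(\N))}$; ($\beta$) that $L_\infty(\M,L_1(\N))$ is $1$-norming for all of $L_1(\M,L_\infty(\N))$, whereas Lemma \ref{sup} establishes norming only for positive elements; and ($\gamma$) weak$^*$-compactness of the decomposable ball so that the bipolar closure does not enlarge it. These three claims \emph{are} the content of this implication, and the final appeal to ``Haagerup's theorem'' does not bypass them: the converse direction of Haagerup's characterization with a uniform constant is itself proved by the very unitary test maps the paper evaluates, so the abstraction postpones rather than avoids the computation.
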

\begin{proof}
We first prove i) $\Rightarrow$ iii). Suppose $L_1(\M,L_\infty(\N))\neq L_1(\M)\widehat{\ten}\N$ isometrically.
Because both spaces are norm completions of the algebraic tensor $L_1(\M)\ten \N$,  there exists $\rho=\sum_{j=1}^n y_j\ten z_j$ such that
\[ \norm{\rho}{L_1(\M,L_\infty(\N))}<1=\norm{\rho}{L_1(\M)\widehat{\ten}\N}. \]
Then by the duality $(L_1(\M)\widehat{\ten}\N)^*=CB(\N,\M^{op})$, there exists a CB map $S\in CB(\N,\M^{op})$ with $\norm{S}{cb}=1$ such that
\[1=\lan S, \rho\ran=\lan \operatorname{id}, \operatorname{id}_\M\ten S(\rho)\ran\pl. \]
Here we have \[\norm{\operatorname{id}_\M\ten S(\rho)}{L_1(\M,L_\infty(\N))}\le \norm{S}{cb}\norm{ \rho}{L_1(\M,L_\infty(\N))}<1\pl.\]
If $\M$ is injective, then there exists a net of  finite-rank, normal, unital, completely positive maps $\Phi_\al$ approximating the identity map $\operatorname{id}_\M$ in the point-weak$^*$ topology. By Lemma \ref{map}, $\Phi_\al=T_{x_\al}$ for some $x_\al\in \M^{op}\ten L_1(\N^{op})$ with
\[\norm{x_\al}{L_\infty(\M^{op},L_1(\N^{op}))}=\norm{\Phi_\al}{cb}=1\pl.\]
This leads to a contraction:
\begin{align*}1=\lan \operatorname{id}, \operatorname{id}_\M\ten S(\rho)\ran=&\lim_{\al} \lan T_{x_\al}, \operatorname{id}_\M\ten S(\rho)\ran\\=&\lim_{\al} \lan x_\al, \operatorname{id}_\M\ten S(\rho)\ran
\\\le &\lim_{\al} \norm{ x_\al}{L_\infty(\M^{op},L_1(\N^{op}))} \norm{\operatorname{id}_\M\ten S(\rho)}{L_1(\M,L_\infty(\N))}
\\\le & \norm{\operatorname{id}_\M\ten S(\rho)}{L_1(\M,L_\infty(\N))}<1.
\end{align*}
For ii)$\Rightarrow$ i), we first reduce the semi-finite $\M$ to the  finite case. We have the decomposition $\M=\oplus_{i\in i}(\M_i\overline{\ten}B(H_i))$ (see \cite[Chapter 5, Proposition ]{takesaki}) where $\M_i$ are finite von Neumann algebras and $H_i$ are Hilbert spaces.
For each $\M_i$, there exists a trace preserving embedding $\iota:\M_i\to \M$ and a projection $P:\M\to e_i\M e_i$ for some projection $e_i$ such that $P\circ\iota= \id_{\M_i}$. This induces the isometric  embedding
\[L_1(\M_i,L_\infty(\N))\subset L_1(\M,L_\infty(\N))\pl, L_1(\M_i)\hten \N\subset L_1(\M)\hten \N\pl.\]
Suppose $L_1(\M,L_\infty(\N))\cong L_1(\M)\widehat{\ten}\N$ isometrically. We have for each $i$, $L_1(\M_i,L_\infty(\N))\cong L_1(\M_i)\widehat{\ten}\N$ isometrically. It suffices to show that this implies $\M_i$ is injective.

We now assume $\M=\M_i$ finite. Let $l_\infty^n$ be the $n$-dimensional commutative $C^*$-algebra. Because $\N$ is infinite dimensional, for any $n$ there exists completely positive and contractive maps (see \cite[Lemma 2.7]{haagerup84})
\[Q:l_\infty^n\to \N, R:\N\to l_\infty^n\]
such that $R\circ Q=\id_{l_\infty^n}$. Both
$\id_\M\ten R$ and $\id_\M\ten Q$ extend to complete contractions
\begin{align*}L_1(\M)\hten l_\infty^n\overset{\id_\M\ten R}{\longrightarrow}  L_1(\M)\hten \N\overset{\id_\M\ten Q}{\longrightarrow}  L_1(\M)\hten l_\infty^n\pl, \\
L_1(\M,l_\infty^n)\overset{\id_\M\ten R}{\longrightarrow}  L_1(\M,L_\infty(\N))\overset{\id_\M\ten Q}{\longrightarrow} L_1(\M,l_\infty^n)\pl. \\
\end{align*}
Thus we have the isometric  imbeddings
\begin{align*}
 L_1(\M,l_\infty^n)\subset L_1(\M,L_\infty(\N))\textnormal{ and }  L_1(\M)\widehat{\ten}l_\infty^n \subset L_1(\M)\widehat{\ten}\N.
\end{align*}
Suppose $L_1(\M,L_\infty(\N))\cong L_1(\M)\widehat{\ten}\N$ isomorphically. Then we have $L_1(\M,l_\infty^n)\cong L_1(\M)\widehat{\ten}l_\infty^n$ for each $n$, and moreover a uniform constant   $c$ such that for all $n$, \[c\norm{\rho} {L_1(\M)\widehat{\ten}l_\infty^n}\le \norm{\rho} {L_1(\M)\widehat{\ten}l_\infty^n}\norm{\rho}{L_1(\M,l_\infty^n)}\le \norm{\rho} {L_1(\M)\widehat{\ten}l_\infty^n}\pl.\]
At the dual level, for each $T:l_\infty^n\to \M^{op}$,
\begin{align}\norm{T}{cb}=\norm{T_x}{cb}\le \norm{x}{L_\infty(\M^{op},l_1^n)}\le c^{-1}\norm{T_x}{cb}\pl. \label{equality}\end{align}
Here $T=T_x$ as in Lemma \ref{map}, for $x=\sum_{j=1}^nT(e_j)\ten e_j\in \M^{op}\ten l_1^n$ with $e_j\in l_1^n$ being the dual standard basis of $l_\infty^n$.
We shall suppress the ``op'' notation since it is equivalent to consider $\M$ and $\M^{op}$ here. For any $n$ unitaries $u_j$ and a central projection $q$ in $\M$, we consider $x_u=q\sum_{j=1}^n u_j\ten e_j$. We have
\begin{align*}\norm{x_u}{L_\infty(\M,l_1^n)}&=\sup\{\norm{q\sum_{j=1}^n au_jb\ten e_j}{L_1(\M,l_1^\infty)} \pl|\pl \norm{a}{L_2(\M)}=\norm{b}{L_2(\M)}=1\}\\
&=\sup\{\sum_{j}\norm{qau_jb}{L_1(\M)} \pl|\pl \norm{a}{L_2(\M)}=\norm{b}{L_2(\M)}=1\}
\\ &\ge \sum_{j}\tau_\M(q)^{-1}\norm{qu_j}{L_1(\M)}
\\ &= \sum_{j=1}^n1 =n.
\end{align*}
Here we have chosen $a=b=\tau_\M(q)^{-1/2}q$. Then by \eqref{equality}, we have \[\norm{T_u}{cb}\ge c\norm{x_u}{L_\infty(\M,l_1^n)}=cn\pl,
T_{u}:l_\infty^n \to \M\pl,\pl\pl \pl T_u((c_j)_j)=q\sum_{j=1}^n c_ju_j\pl.\]
Then it follows from \cite[Lemma 2.3 \& Lemma 2.5]{haagerup84} that $\M$ is injective. Since iii)$\Rightarrow$ ii) is trivial, this  completes the proof.
\end{proof}

\subsection{Quantum Majorization} \label{QM-VNA}
We now discuss quantum majorization for semifinite von Neumann algebras. We will focus on the case where $\mathcal M$ is injective, because by Theorem \ref{injective}, beyond injectivity we lose the duality between $H_{min}$ entropy and CPTP maps.

We say $T:L_1(\M)\to L_1(\M)$ is completely positive trace preserving (resp. trace non-increasing) if its adjoint $T^\dag:\M^{op}\to \M^{op}$ is normal completely positive and unital (resp. sub-unital). We will use the abbreviation CPTP for completely positive trace preserving, CPTNI for completely positive trace non-increasing and UCP for unital completely positive. We start with a consequence of Lemma \ref{sup} and Theorem \ref{injective}.

\begin{prop}\label{attain}Let $\M$ be injective.\begin{enumerate}
\item[i)]For a self-adjoint $x\in L_1(\M)\hten \N$,
\begin{align*}\la(x)=&\sup \{\lan \Phi, x\ran| \Phi:L_1(\M)\to L_1(\N^{op})\pl \text{CPTNI}\} \end{align*}
\item[ii)]Define the real part of $x\in L_1(\M)\hten \N$ as $\Re \pl  x=(x+x^*)/2$. Then
\begin{align*}\la(\Re \pl x)=&\sup \{\Re\pl \lan \Phi, x\ran| \Phi:L_1(\M)\to L_1(\N^{op})\pl \text{CPTNI}\} \end{align*}
\item[iii)]For positive $\rho$,
\begin{align*}\norm{\rho}{L_1(\M)\hten\N}=&\sup \{\lan \Phi, \rho\ran| \Phi:L_1(\M)\to L_1(\N^{op})\pl \text{CPTNI}\}\\
=&\sup \{\lan \Phi, \rho\ran| \Phi:L_1(\M)\to L_1(\N^{op})\pl \text{CPTP}\} \end{align*}
\end{enumerate}
\end{prop}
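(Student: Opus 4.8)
The plan is to transport the duality of Lemma \ref{sup} through the Choi-type correspondence $y\mapsto T_y$ of Lemma \ref{map}. Since $\M$ is injective, Theorem \ref{injective} identifies $L_1(\M)\hten\N$ isometrically with $L_1(\M,L_\infty(\N))$, and this identification respects positivity and the order (hence the functional $\la(\cdot)$) as well as the dual pairing; so I may read $x$ as an element of $L_1(\M,L_\infty(\N))$ and apply Lemma \ref{sup} verbatim. The one bookkeeping point is the passage through opposite algebras: a positive $y\in L_\infty(\M,L_1(\N))$ gives, via Lemma \ref{map}(ii), a completely positive $T_y$, which after the op-identifications recorded just before the proposition I regard as a CP map $\Phi:L_1(\M)\to L_1(\N^{op})$ with $\lan\Phi,x\ran=\tau(yx)$.

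For (i), Lemma \ref{sup} gives $\la(x)=\sup\{\tau(yx)\mid y\ge 0,\ \norm{y}{L_\infty(\M,L_1(\N))}=1\}$. I would first record that for positive $y$ one has $\norm{y}{L_\infty(\M,L_1(\N))}=\norm{\id_\M\ten\tau_\N(y)}{\infty}$: by Lemma \ref{positive}(ii) the norm is $\sup_{\norm{a}{2}=1}\tau((a\ten 1)y(a^*\ten 1))=\sup\tau_\M(a^*a\,\id\ten\tau_\N(y))$, and the supremum over densities $a^*a$ equals $\norm{\id\ten\tau_\N(y)}{\infty}$. By the computation in the proof of Lemma \ref{map} the map $T_y$ is trace non-increasing exactly when $\id\ten\tau_\N(y)\le 1$, so positive $y$ with $\norm{y}{L_\infty(\M,L_1(\N))}\le 1$ correspond precisely to CPTNI maps $\Phi=T_y$. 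It remains to replace the sphere $\norm{y}{}=1$ by the ball $\norm{y}{}\le 1$: since $\la(x)\ge 0$ (the majorizing constants in its definition are nonnegative), rescaling any admissible $y\ne 0$ to $y/\norm{y}{}$ only increases $\tau(yx)$ when this is positive, and otherwise keeps it below $\la(x)$, so the two suprema agree. This yields (i).

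Part (ii) is immediate from (i) applied to the self-adjoint element $\Re\,x=(x+x^*)/2$: for $\Phi=T_y$ with $y\ge 0$ self-adjoint, traciality gives $\overline{\tau(yx)}=\tau(yx^*)$, whence $\Re\,\lan\Phi,x\ran=\tau(y\,\Re\,x)=\lan\Phi,\Re\,x\ran$, and taking the supremum over CPTNI $\Phi$ reproduces $\la(\Re\,x)$. For (iii) with $\rho\ge 0$ I have $\norm{\rho}{L_1(\M)\hten\N}=\la(\rho)$ by Lemma \ref{extend}(ii) and Theorem \ref{injective}, and the first equality is exactly (i). The CPTP supremum is trivially $\le$ the CPTNI one; for the reverse I would upgrade any CPTNI witness to a CPTP one without lowering the pairing. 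Given CPTNI $\Phi=T_y$, set $c=1_\M-\id_\M\ten\tau_\N(y)\ge 0$ and fix any density $\omega\in L_1(\N)$; then $\tilde y=y+c\ten\omega\ge y$ satisfies $\id\ten\tau_\N(\tilde y)=1_\M$, so $T_{\tilde y}$ is CPTP by Lemma \ref{map}(iii), while $\tau(\tilde y\rho)\ge\tau(y\rho)$ since $c\ten\omega\ge 0$ and $\rho\ge 0$. Hence the CPTP supremum dominates the CPTNI one, giving equality.

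The only genuinely non-formal step is this last refinement in (iii): one must saturate the trace defect $c$ while remaining inside $\M\ten L_1(\N)\subset L_\infty(\M,L_1(\N))$ and without decreasing $\tau(y\rho)$, which is precisely what the correction $c\ten\omega$ achieves. The conceptual load is otherwise carried by injectivity: Theorem \ref{injective} is what makes $\la(\cdot)$ — an order-theoretic quantity living in $L_1(\M,L_\infty(\N))$ — coincide with the $L_1(\M)\hten\N$-norm, so that the supremum over CPTNI maps computes it exactly rather than merely bounding it from below. I would keep careful track of the $\M\leftrightarrow\M^{op}$ and $\N\leftrightarrow\N^{op}$ identifications throughout, as in the pairing computation preceding the proposition, since these are where adjoint or sign errors would most easily creep in.
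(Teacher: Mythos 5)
There is a genuine gap in part (i), and it propagates to (iii). Your argument transports Lemma \ref{sup} through the correspondence $y\mapsto T_y$ and correctly shows that every positive $y\in L_\infty(\M,L_1(\N))$ with $\norm{y}{}\le 1$ yields a CPTNI map $T_y$ with $\lan T_y,x\ran=\tau(yx)$; this gives the inequality $\la(x)\le\sup_\Phi\lan\Phi,x\ran$. But your claim that such $y$ ``correspond precisely'' to CPTNI maps is false as a surjectivity statement: $L_\infty(\M,L_1(\N))$ is the norm completion of $\M\overline{\ten}\N_0$, and a general CPTNI map $\Phi:L_1(\M)\to L_1(\N^{op})$ need not arise as $T_y$ for any $y$ in that completion. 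For instance, with $\M=\N=B(H)$ and $H$ infinite dimensional, the identity channel is CPTP but lies at cb-distance at least $1$ from every $T_{y_n}$ with $y_n\in\M\overline{\ten}q\N q$, $\tau_\N(q)<\infty$ (compare a density supported orthogonally to $q$ with its image), so it is not $T_y$ for any $y$ in the completion. Consequently your chain of equalities only bounds the supremum over the subclass $\{T_y\}$, and the reverse inequality $\lan\Phi,x\ran\le\la(x)$ for an \emph{arbitrary} CPTNI $\Phi$ is never established. The paper supplies exactly this missing half first: it proves $\lan T,y\ran\ge 0$ for every CP $T$ and every positive $y\in L_1(\M)\hten\N$ (via the matrix positivity of $(T(b_i^*b_j))_{ij}$ and GNS vectors), and then deduces that $x\le\la\,\si\ten 1$ forces $\lan\Phi,x\ran\le\la\,\tau_\N(\Phi(\si))\le\la$. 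This same positivity-of-pairing fact is what lets the paper upgrade an arbitrary CPTNI witness to a CPTP one in (iii); your explicit correction $\tilde y=y+c\ten\omega$ is fine but again only covers witnesses of the form $T_y$.

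Two smaller points. First, your sphere-to-ball step rests on the claim $\la(x)\ge 0$, which is not automatic for self-adjoint $x$ (e.g.\ $x=-\si\ten 1$ has $\la(x)=-1$); the paper's own argument carries the same implicit restriction (it separates only at positive levels $\la<\la(\rho)$), so this is a shared blemish rather than a defect unique to your write-up, but you should not assert nonnegativity as if it followed from the definition. Second, your identity $\norm{y}{L_\infty(\M,L_1(\N))}=\norm{\id_\M\ten\tau_\N(y)}{\infty}$ for positive $y$ is correct and is a nice way to package Lemma \ref{positive}(ii) together with Lemma \ref{map}(iii); it just cannot substitute for the upper-bound argument described above.
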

\begin{proof}We first show that $\lan \Phi, y\ran\ge 0$ for a positive $y\in L_1(\M)\hten \N$ and CP $T:\N\to \M^{op}$. By density argument, it suffices to consider $\rho\in \M_0\ten \N$. Suppose $y=(\sum_{j=1}^n a_j\ten b_j)^*(\sum_{j=1}^n a_j\ten b_j)$ for some $a_j\in e\M e$ and $b_j\in \N$. Then
${\operatorname{id}_n\ten T(\sum_{i,j=1}^n e_{ij}\ten b_i^*b_j)}=\sum_{i,j=1}^n e_{ij}\ten T(b_i^*b_j)$ is positive in $M_n(\M^{op})$.
Therefore,
\begin{align*}
\lan T,y\ran= \tau_{\M}(\sum_{i,j=1}^{n} (a_i^*a_j)^{op} T(b_i^*b_j))=\sum_{i,j=1}^{n}\tau_{\M^{op}}( (a_i^{op})^* T(b_i^*b_j)a_j^{op})= \sum_{i,j=1}^{n} \bra{a_i^{op}} T(b_i^*b_j) \ket{a_j^{op}}\ge 0,
\end{align*}
where $\ket{a_j^{op}}\in L_2(\M^{op},\tau_\M)$ is the vector of $a_j^{op}$ in the GNS representation. Thus,
$\lan T,y\ran\ge 0$ for CP $T:\N\to \M^{op}$ and also
CP $T:L_1(\M)\to L_1(\N^{op})$ as   normal maps.
Then if $x\le \la 1\ten \si$ and $\Phi: L_1(\M)\to L_1(\N^{op})$ CPTNI, we have
\[\lan\Phi , x\ran\le  \la \lan\Phi , 1\ten \si\ran=\la\tau_\N(\Phi(\si))\le \la \pl, \]
which implies $\lan\Phi , \rho\ran\le \la(\rho)$. On the other hand, by Theorem \ref{injective} and Lemma \ref{sup},
\begin{align*}
\la(x)&=\sup \{\tau(xy)\pl |\pl y\in \M\overline{\ten}\N_0 ,y\ge 0,  \norm{y}{L_1(\M,L_\infty( \N^{op}))}= 1\}
\\&=\sup \{\lan T_y, \rho\ran\pl |\pl y\in \M\overline{\ten}\N_0 ,y\ge 0,  \norm{y}{L_1(\M,L_\infty( \N^{op}))}= 1\}
\\&\le \sup \{\lan T, x\ran\pl | T:L_1(\M)\to L_1(\N) \pl \text{CPTNI}\pl\}\\ &\le \la(x) \pl.
\end{align*}
This proves i). ii) follows from the fact that for any CP $T$, $\Re \lan T,x\ran =\lan T, \Re\pl x\ran$. For iii),
given a CPTNI map $T$, one can always find a CPTP $\tilde{T} $ such that $\tilde{T}-T$ is CP. Therefore,
\begin{align*} &\norm{\rho}{L_1(\M)\hten\N}=\la(\rho)=\sup_{T \pl \text{CPTNI}} \lan T, \rho\ran\le \sup_{T \pl \text{CPTP}} \lan T, \rho\ran\le \la(\rho)\pl. \qedhere
\end{align*}
\end{proof}

%\begin{theorem}A semifinite von Neumann algebra $(\M,\tau_\M)$ is injective if and only if \[ L_1(\M,L_\infty(\M))\cong L_1(\M)\hat{\ten}L_\infty(\M)\pl.\]
%\end{theorem}
%\begin{proof}
%\end{proof}

\begin{lemma}
\label{convex} Let $\rho$ be a bipartite density operator in $L_1(\M\overline{\ten} \N)$. The set
\[C(\rho)=\{\Phi\ten \operatorname{id}(\rho)\pl |\pl  \Phi:L_1(\M)\to L_1(\M) \pl \text{CPTP}\}\]
 is a closed set in $L_1(\M\overline{\ten}\N)$ with respect to the topology induced by \[\M^{op}\ten_{\min}\N^{op}\subset \M^{op}\overline{\ten}\N^{op}=L_1(\M\overline{\ten}\N)^*.\]
 In particular, $C(\rho)$ is a norm closed set in $L_1(\M\overline{\ten}\N)$.
 \end{lemma}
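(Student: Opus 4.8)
The plan is to argue by nets. Suppose $\Phi_\al\colon L_1(\M)\to L_1(\M)$ is a net of CPTP maps with $\Phi_\al\ten\id(\rho)\to\xi$ for some $\xi\in L_1(\M\overline{\ten}\N)$ in the topology induced by $\M^{op}\ten_{\min}\N^{op}$; I must produce a CPTP map $\Phi$ with $\Phi\ten\id(\rho)=\xi$. Passing to adjoints, each $\Phi_\al^\dag\colon\M^{op}\to\M^{op}$ is normal, unital and completely positive, hence lies in the unit ball of $CB(\M^{op},\M^{op})$. Since $(L_1(\M))^*=\M^{op}$, the duality \eqref{pro} gives $CB(\M^{op},\M^{op})=(\M^{op}\hten L_1(\M))^*$, so by Banach--Alaoglu the unit ball is weak$^*$-compact, and the UCP maps form a weak$^*$-closed subset: complete positivity is tested against positive elements and positive normal functionals and so passes to the limit, while $\Psi(1)=\lim_\beta \Phi_\beta^\dag(1)=1$. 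Thus a subnet $\Phi_\beta^\dag$ converges weak$^*$ to a UCP map $\Psi\colon\M^{op}\to\M^{op}$.

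Next I identify what $\Psi$ computes. For $c^{op}\in\N^{op}$ write $\rho_c\in L_1(\M)$ for the partial slice of $\rho$ in the $\N$-variable against $c^{op}$, and $\xi_c$ for the corresponding slice of $\xi$; both lie in $L_1(\M)$ because $L_1(\M\overline{\ten}\N)=L_1(\M)\hten L_1(\N)$. Testing $\Phi_\beta\ten\id(\rho)\to\xi$ against $a^{op}\ten c^{op}$ and rewriting the left side as $\lan \Phi_\beta^\dag(a^{op}),\rho_c\ran$ (a pairing against the predual element $a^{op}\ten\rho_c\in\M^{op}\hten L_1(\M)$), I obtain in the limit $\lan \xi_c,a^{op}\ran=\lan\Psi(a^{op}),\rho_c\ran$ for all $a^{op},c^{op}$. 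The crucial observation is that the left-hand side exhibits $\rho_c\circ\Psi=\xi_c$ as a \emph{normal} functional on $\M^{op}$, even though $\Psi$ itself need not be normal.

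The main obstacle is exactly this failure of normality: a weak$^*$-limit of normal maps need not be normal, so a priori $\Psi$ is not the adjoint of a map $L_1(\M)\to L_1(\M)$. I resolve it by passing to the normal part. Let $P\colon(\M^{op})^*\to L_1(\M)$ be the projection onto normal functionals and define $\Psi_n$ to be the adjoint of the bounded map $\eta\mapsto P(\eta\circ\Psi)=(\eta\circ\Psi)_n$ on $L_1(\M)$. Then $\Psi_n$ is normal by construction; it is completely positive because $\Psi$ is CP and taking normal parts preserves positivity at every matrix level; and it is subunital, $\Psi_n(1)\le 1$. Since $\rho_c\circ\Psi=\xi_c$ is \emph{already} normal, its normal part is itself, so $\rho_c\circ\Psi_n=\xi_c$: the map $\Psi_n$ still reproduces $\xi$ on all slices. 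Consequently its predual $\Phi_0=(\Psi_n)_*$ is CPTNI and $\Phi_0\ten\id(\rho)$ agrees with $\xi$ on the weak$^*$-dense subalgebra $\M^{op}\ten_{\min}\N^{op}$, hence $\Phi_0\ten\id(\rho)=\xi$.

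It remains to repair subunitality to unitality without disturbing the slices. First, $\xi\ge 0$ since it pairs nonnegatively with the weak$^*$-dense cone of positive elements of $\M^{op}\ten_{\min}\N^{op}$, and $\tau(\xi)=\lan\xi,1\ten 1\ran=\lim_\al\tau(\Phi_\al\ten\id(\rho))=1$ because $1\ten 1\in\M^{op}\ten_{\min}\N^{op}$. Writing $\rho_\M=(\id_\M\ten\tau_\N)(\rho)$ and $d=1-\Psi_n(1)\ge 0$, the identity $\tau(\Phi_0\ten\id(\rho))=\tau(\xi)=1=\tau(\rho)=\tau_\M(\rho_\M)$ forces $\tau_\M(\rho_\M d)=0$, so $d$ is orthogonal to the support $s=\supp(\rho_\M)$; as every slice satisfies $s\rho_c s=\rho_c$ (each positive slice is dominated by a multiple of $\rho_\M$), this gives $\lan d,\rho_c\ran=0$ for all $c$. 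Fixing any normal state $\phi$ on $\M^{op}$ and setting $\tilde\Psi=\Psi_n+\phi(\cdot)\pl d$, the map $\tilde\Psi$ is normal and UCP, with $\tilde\Psi(1)=\Psi_n(1)+d=1$ and $\rho_c\circ\tilde\Psi=\rho_c\circ\Psi_n=\xi_c$. Its predual $\Phi=\tilde\Psi_*$ is therefore CPTP and satisfies $\Phi\ten\id(\rho)=\xi$, so $\xi\in C(\rho)$. Finally, since the topology induced by $\M^{op}\ten_{\min}\N^{op}$ is coarser than the norm topology on $L_1(\M\overline{\ten}\N)$, closedness in the former yields the asserted norm-closedness.
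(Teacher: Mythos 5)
Your proof is correct and follows essentially the same route as the paper's: pass to a weak\(^*\)-convergent subnet of the adjoints, split the limit into its normal and singular parts against \(L_1(\M)\oplus L_1(\M)^{\perp}\), observe that the slice functionals are already normal so the normal part alone reproduces the limit state, and then repair trace preservation. The only cosmetic difference is in the last step, where you kill the correction term \(\phi(\cdot)\,d\) using that \(d=1-\Psi_n(1)\) is orthogonal to the support of \(\rho_\M\), while the paper's correction \(\Phi_0\) vanishes on \(\rho\) because \(\tau_\M\ten\operatorname{id}(\rho)=\tau_\M\ten\operatorname{id}(\si)\); both are valid.
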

\begin{proof}Let $\si\in L_1(\M\overline{\ten}\N)$ and $\Phi_\al$ be a net of CPTP maps such that $\Phi_\al\ten \operatorname{id}(\rho)\to \si$ with respect to $\M^{op}\ten_{\min}\N^{op}$. That is, for any $x\in \M\ten \N$
\begin{align}\lim_{\al}\tau(x \Phi_\al\ten \operatorname{id}(\rho))=\tau(x \si)\pl. \label{converge}\end{align}
Taking $x=1_\M\ten 1_\N$, this implies $\tau(\si)=\lim_{\al}\tau( \Phi_\al\ten \operatorname{id}(\rho))=1$. Note that the \[CB(L_1(\M),L_1(\M))\subset CB(\M^{op},\M^{op})=(L_1(\M)\hten\M^{op})^*\pl.\]
By weak$^*$-compactness, there exists a sub-net $\Phi_{\beta}$ such that their corresponding subnet of adjoints $\Phi_{\beta}^{\dag}:\M^{op} \to \M^{op}$ converges to some $\Phi^\dag:\M^{op} \to \M^{op}$ in the point-weak$^*$ topology. That is, for $x\in L_1(\M), y^{op}\in\M^{op}$, we have
\[\lim_{\beta}\tau_\M\big(x\Phi_{\beta}^\dag(y^{op})\big)=\tau_\M\big(x\Phi^\dag(y^{op}) \big) \pl.\]
Then it is clear that $\Phi^\dag$ is UCP. Note that
$(\M^{op})^*= L_1(\M)\oplus L_1(\M)^{\perp}$ decomposes into a normal part and a singular part. Let $\Phi:L_1(\M)\to (\M^{op})^*$ be the restriction of the double adjoint map $\Phi^{\dag\dag}:(\M^{op})^*\to (\M^{op})^*$. Then $\Phi_{\beta}\ten \id(\rho)\to \Phi\ten \id(\rho)$ in the sense that for any $x\in \M\ten\N$
\[ \tau(x\Phi_{\beta}\ten \id(\rho))=\tau(\Phi_{\beta}^\dag\ten \id(x)\rho)\to \Phi\ten \id(\rho)(x)\pl,\]
where $\Phi\ten \id(\rho)\in (\M^{op})^*\hten L_1(\N)$. Then by \eqref{converge},  for any $x\in \M\ten \N$,
\[\Phi\ten \id(\rho)(x)=\tau(\si x):=\si(x)\]
where the density operator $\si$ is viewed as a normal state. Decompose the map
$\Phi=\Phi_n+\Phi_s$ where $\Phi_n\in CB(L_1(\M),L_1(\M))$ is the normal part and $\Phi_s\in CB(L_1(\M), L_1(\M)^{\perp})$ is the singular map. Then for any $x\in \M\ten\N$,
\begin{align}\Big(\si-\Phi_n\ten \id(\rho)\Big)(x)=\Phi_s\ten \id(\rho)(x)\pl \label{coin}\end{align}
where $\si-\Phi_n\ten \id(\rho)\in L_1(\M)\hten L_1(\N)$  and $\Phi_s\ten \id(\rho)\in (\M^{op})^*\hten  L_1(\N)$. Let $\omega_1,\omega_2:\M\to \mathbb{C}$ be the linear functionals defined by
\begin{align*}&\omega_1(y):=\Big(\si-\Phi_n\ten \id(\rho)\Big)(y\ten 1)\pl, \pl
\omega_2(y):=\Phi_s\ten \id(\rho)(y\ten 1)\pl, \pl y\in \M.
\end{align*}
Then $\omega_1$ is normal and $\omega_2$ is singular. By \eqref{coin}, $\omega_1=\omega_2$ which implies $\omega_1=\omega_2=0$. Therefore,
\[\Phi_s\ten \id(\rho)(1\ten 1)=\omega_2(1)=0\pl.\]
Hence $\Phi_s\ten \id(\rho)=0$. We have $\si=\Phi_n\ten \id(\rho)$ for $\Phi_n:L_1(\M)\to L_1(\M)$ CPTNI. Define $\Phi_0(x)=\tau(\Phi_n(x)-x)\omega$ for any density operator $\omega\in L_1(\M)$. Then $\tilde{\Phi}=\Phi_n+\Phi_0$ is a CPTP map and $\tilde{\Phi}\ten id(\rho)=\si$. This completes the proof.
\end{proof}
We say a CPTP map $\Phi:L_1(\M)\to L_1(\N)$ is \emph{entanglement-breaking} if $\Phi(\rho)=\sum_{j=1}\tau(x_j\rho)\omega_j$ for some set of $x_j$, $j=1, 2, \dots$, satisfying $\sum_{j=1}x_j=1$ and $x_j\ge 0$ (such a set $\{x_j\}$ is called a measurement in quantum mechanics) and density operators $\omega_j$. Such a CPTP map is a  quantum channel that admits a factorization through $l_1^\infty$, which is the state space of a classical system. We now prove our main theorem with respect to quantum majorization for injective semifinite von Neumanna algebra.
\begin{theorem}\label{von} Let $\M$ and $\N$ be two semifinite von Neumann algebras and let $\M$ be injective. Let $\rho,\si$ be two density operators in $L_1(\M\overline{\ten} \N)$.
The following are equivalent:
\begin{enumerate}
\item[i)] there exists a CPTP map $\Phi:L_1(\M)\to L_1(\M)$ such that $\Phi\ten \operatorname{id}(\rho)= \si$
\item[ii)] for any CP and CB $\Psi:L_1(\N)\to \M^{op}$,
    \[ \norm{\operatorname{id}\ten \Psi(\rho)}{L_1(\M)\hten (e\M e)^{op}}\ge \norm{\operatorname{id}\ten \Psi(\si)}{L_1(\M)\hten (e\M e)^{op}}\]
\item[iii)]  for any projection $e\in \M$ with $\tau_\M(e)<\infty$ and for any entanglement-breaking CPTP map $\Psi:L_1(\N)\to L_1(e\M e^{op})\cap e\M e^{op}$,
    \[ \norm{\operatorname{id}\ten \Psi(\rho)}{L_1(\M)\hten e\M e^{op}}\ge \norm{\operatorname{id}\ten \Psi(\si)}{L_1(\M)\hten e\M e^{op}}\]
\end{enumerate}
\end{theorem}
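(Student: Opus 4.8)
The plan is to prove the cycle i) $\Rightarrow$ ii) $\Rightarrow$ iii) $\Rightarrow$ i), where i) is, in the notation of Lemma \ref{convex}, precisely the assertion $\si\in C(\rho)$. The first two implications are the ``data-processing'' direction and are soft. For i) $\Rightarrow$ ii), if $\si=\Phi\ten\operatorname{id}(\rho)$ with $\Phi$ CPTP, then for any test map $\Psi$ one has $\operatorname{id}\ten\Psi(\si)=(\Phi\ten\operatorname{id})\big(\operatorname{id}\ten\Psi(\rho)\big)$; since a CPTP map is a complete contraction on $L_1(\M)$, the map $\Phi\ten\operatorname{id}$ contracts the operator space projective tensor norm by functoriality of $\hten$, giving $\norm{\operatorname{id}\ten\Psi(\si)}{}\le\norm{\operatorname{id}\ten\Psi(\rho)}{}$. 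For ii) $\Rightarrow$ iii) I would simply observe that an entanglement-breaking CPTP map $\Psi:L_1(\N)\to e\M e^{op}$ is in particular a CP and CB map into $\M^{op}$, and that the $L_1(\M)\hten e\M e^{op}$ and $L_1(\M)\hten\M^{op}$ norms agree on $\operatorname{id}\ten\Psi(w)$ because compression by $e$ is a completely contractive projection $\M^{op}\to e\M e^{op}$; thus iii) is a special case of ii).

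The substance is iii) $\Rightarrow$ i), which I would argue by contraposition. Assume $\si\notin C(\rho)$. By Lemma \ref{convex}, $C(\rho)$ is convex and closed for the topology $\sigma\big(L_1(\M\overline{\ten}\N),\M^{op}\ten_{\min}\N^{op}\big)$, so a Hahn--Banach separation produces a self-adjoint $z\in\M^{op}\ten_{\min}\N^{op}$ and a real $s$ with
\[\tau(z\si)>s\ge\sup\{\tau\big(z\cdot\Phi\ten\operatorname{id}(\rho)\big)\mid \Phi\ \text{CPTP}\}.\]
Because $\si$ and every $\Phi\ten\operatorname{id}(\rho)$ are density operators of trace one, adding a scalar multiple of $1\ten 1$ to $z$ shifts both sides of this inequality equally; I may therefore normalize so that $z\ge 0$.

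The crux is that the test maps in iii), read through the identity $\norm{\cdot}{L_1(\M)\hten e\M e^{op}}=\la(\cdot)$ (Theorem \ref{injective}, Lemma \ref{extend}) together with the dual formula $\la(\cdot)=\sup_\Theta\lan\Theta,\cdot\ran$ over CPTP $\Theta$ (Proposition \ref{attain}) and the Choi correspondence (Lemma \ref{map}), only furnish \emph{separable} witnesses $\sum_j\omega_j\ten x_j$, whereas the separating element $z$ may be entangled. To bridge this I would exploit that $C(\rho)$ is invariant under $\Phi'\ten\operatorname{id}$ for every CPTP $\Phi'$: setting $z'=(\Phi'^{\dagger}\ten\operatorname{id})(z)$ with $\Phi'$ entanglement-breaking makes $z'$ positive and separable, and for every CPTP $\Theta:L_1(\M)\to L_1(e\M e)$ one gets $\tau\big((\Theta^{\dagger}\ten\operatorname{id})(z')\rho\big)=\tau\big(z\cdot(\Phi'\circ\iota\Theta)\ten\operatorname{id}(\rho)\big)\le s$, since $\Phi'\circ\iota\Theta$ is again CPTP. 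The decisive point is to choose $\Phi'$ from an \emph{entanglement-breaking approximate identity} so that $\tau(z'\si)=\tau\big(z\cdot\Phi'\ten\operatorname{id}(\si)\big)>s$ is preserved; such an approximate identity exists precisely because $\M$ is injective, by composing a coherent-state measure-and-prepare approximate identity on $B(H)$ with the completely positive projection $P:B(H)\to\M$. Finally I would compress the finitely many $\M$-legs of $z'$ into a corner $e\M e$ with $\tau_\M(e)<\infty$ and renormalize its $\N$-legs into a POVM, realizing $z'$ as $\sum_j\omega_j\ten x_j$ coming from an entanglement-breaking CPTP $\Psi:L_1(\N)\to e\M e^{op}$.

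With $\Psi$ so chosen, Proposition \ref{attain} yields $\la(\operatorname{id}\ten\Psi(\rho))=\sup_\Theta\tau\big((\Theta^{\dagger}\ten\operatorname{id})(z')\rho\big)\le s<\tau(z'\si)\le\la(\operatorname{id}\ten\Psi(\si))$, and since these $\la$-values are the $L_1(\M)\hten e\M e^{op}$ norms, this contradicts iii). I expect the main obstacle to be exactly the separablization in the previous paragraph: proving that the entanglement-breaking tests of iii) are rich enough to detect $\si\notin C(\rho)$, which rests on manufacturing the entanglement-breaking approximate identity from injectivity and on the attendant bookkeeping (finite-corner compression and POVM normalization) needed to place the witness in the exact class appearing in iii). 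Every other step is either the soft functoriality of $\hten$ or a direct appeal to Proposition \ref{attain} and Lemma \ref{map}.
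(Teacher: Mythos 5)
Your easy implications (i)\,$\Rightarrow$\,ii) by functoriality of $\hten$ under the complete contraction $\Phi\ten\operatorname{id}$, and ii)\,$\Rightarrow$\,iii) by restriction of the test class) match the paper, and your overall strategy for iii)\,$\Rightarrow$\,i) -- separate $\si$ from $C(\rho)$ using Lemma \ref{convex}, then convert the separating functional into an admissible test map via Proposition \ref{attain} and Lemma \ref{map} -- is the right skeleton. The problem is the step you yourself flag as decisive: the ``entanglement-breaking approximate identity.'' No such object exists once $\M$ is noncommutative. If $\Phi'_\al$ were entanglement-breaking channels with $\Phi'_\al\to\operatorname{id}$ in the point-weak topology, then compressing to a corner $e\M e\cong M_2$ and passing to convex combinations would give cb-norm convergence on that finite-dimensional corner, forcing $\operatorname{id}_{M_2}$ to be entanglement-breaking, which it is not. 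Injectivity gives approximation by maps factoring through \emph{matrix algebras} (semidiscreteness), not through \emph{abelian} algebras, and composing with the projection $P:B(H)\to\M$ does not change this. Moreover the failure is not just of the net but of the single instance you need: for a fixed entangled witness $z$ and entangled $\si$, the quantity $\tau\big(z\cdot\Phi'\ten\operatorname{id}(\si)\big)$ can be bounded away from $\tau(z\si)$ uniformly over all entanglement-breaking $\Phi'$ (this is exactly the entanglement-detection gap, e.g.\ the fidelity bound $1/2$ for measure-and-prepare channels against a maximally entangled pair), so your $z'$ need not satisfy $\tau(z'\si)>s$.

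The paper closes this gap by changing the witness additively rather than by precomposing with a channel. After perturbing $z$ to a finite algebraic tensor $x_2=\sum_j a_j\ten b_j$ and taking its real part $x_3$, one adds $K\,1\ten 1$ with $K=\sum_j\norm{a_j}{}\norm{b_j}{}$; since $\si$ and every $\Phi\ten\operatorname{id}(\rho)$ have trace one, this shifts both sides of the strict inequality by exactly $K$, and the identity
\[
a\ten b+\norm{a}{}\norm{b}{}\,1\ten 1=\tfrac{1}{2}\Big((a+\norm{a}{}1)\ten(b+\norm{b}{}1)+\norm{a}{}(1-a)\ten\norm{b}{}(1-b)\Big)
\]
(applied termwise to self-adjoint contractions) exhibits $x_4=x_3+K1\ten1$ as a finite sum of tensor products of \emph{positive} elements. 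The witness is therefore separable by construction, and the associated map $T_1(\omega)=\sum_j\tau_\N(d_j\omega)c_j$ (after compressing the $\M$-legs by a finite-trace projection $e$) is automatically of measure-and-prepare form, i.e.\ entanglement-breaking. Your proposal is missing exactly this normalization trick; without it, the separablization cannot be carried out. Note also that the final adjustment from CPTNI to CPTP requires the two-case analysis ($\rho_\N=\si_\N$ versus $\rho_\N\neq\si_\N$) in the paper, which your ``POVM normalization'' gloss does not supply.
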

\begin{proof}The direction i)$\Rightarrow$ ii) and iii) follows from the factorization $\operatorname{id}\ten \Psi(\si)=\Phi\ten \id \Big( \operatorname{id}\ten \Psi(\rho)\Big)$ and \[\norm{\Phi\ten \id:L_1(\M)\hten\M^{op}\to L_1(\M)\hten\M^{op}}{}\le \norm{\Phi: L_1(\M)\to  L_1(\M)}{cb}=1\pl.\]
Let $C(\rho)$ be the convex set from Lemma \ref{convex}
\[ C(\rho)=\{\Phi\ten\operatorname{id}(\rho) | \Phi:L_1(\M)\to L_1(\M), CPTP\}\pl\]
for some bipartite density operator $\rho$.
Suppose by way of contradiction that $\si\notin C$. Because $C(\rho)$ is closed with respect to the weak topology induced by $\M^{op}\ten_{min}\N^{op}$, by the Hahn-Banach theorem
 there exists $x_1\in \M\ten_{min} \N$ such that
\[ \Re \pl \tau(\si x_1)> Re \pl \sup_\Phi \tau( \Phi\ten\operatorname{id} (\rho) x_1 )\pl.\]
We can replace $x_1$ with a finite tensor $x_2=\sum_{j}a_j\ten b_j\in \M\ten \N$ such that $\norm{x_1-x_2}{}<\epsilon$ is small enough and
\[\Re \pl \tau(\si x_2)>\sup_{\Phi }\Re \pl \tau( \operatorname{id}\ten \Phi(\rho)x_2).\]
Take $x_3=(x_2+x_2^*)/2$ be the real part of $x_2$:
\begin{align}\label{5}
 x_3=&\frac{1}{2}(x_2+x_2^*)=\frac{1}{2}\sum_{j}(a_j\ten b_j+a_j^*\ten b_j^*)\nonumber\\
=&\frac{1}{4}\Big(\sum_{j}(a_j+a_j^*)\ten (b_j+b_j^*)+\sum_j i(a_j-a_j^*)\ten (-i)(b_j-b_j^*)\Big),
\end{align}
which is a finite sum of tensor products of self-adjoint elements. Since
$\si$ and $\Phi\ten \id (\rho)$ are positive,
\[ \tau(\si x_3)=\Re \pl \tr(\si x_2)>\sup_{\Phi}\Re \pl \tau( \operatorname{id}\ten \Phi(\rho)x_2)=\sup_{\Phi} \tau( \operatorname{id}\ten \Phi(\rho)x_3).\]
For each $j$,
\[a_j\ten b_j+\norm{a_j}{}\norm{b_j}{}1\ten 1=\frac{1}{2}\Big((a_j+\norm{a_j}{}1)\ten (b_j+\norm{b_j}{}1)+ (\norm{a_j}{}(1-a_j))\ten (\norm{b_j}{}(1-b_j))\Big).\]
is a sum of tensor products of positive elements.
Take $K=\sum_{j}\norm{a_j}{}\norm{b_j}{}$. Then $x_4=x_3+K 1\ten 1\in B(H_A)\ten B(H_B)$ is a sum of tensor products of positive elements. Since $\tau(\operatorname{id}\ten \Phi(\rho))= \tau( \si)=1$, we have
\begin{align}\tau(\si x_4)=\tau(\si x_3)+K>\sup_\Phi \tau( \operatorname{id}\ten \Phi(\rho)x_3)+K\ge \sup_\Phi \tau( \operatorname{id}\ten \Phi(\rho)x_4)\pl. \label{1}\end{align}
The opposite element $x_4^{op}\in \M^{op}\overline{\ten}\N^{op}$ corresponds to a CP map
$T\in CB(L_1(\N),\M^{op})$. Note that  $\id\ten T(\si)\in L_1(\M)$ and  $\id\ten T(\rho)\in L_1(\M)\hten \M^{op}$. We have by Proposition \ref{attain}
\begin{align*}\tau(x_4\si)=\lan T, \si\ran&=\lan \id_\M, \id\ten T(\si)\ran \le \norm{\id\ten T(\si)}{L_1(\M)\hten \M^{op}}, \textnormal{ and}\\
\vspace{5pt}
\sup_{\Phi\pl \text{CPTP}}\tau (x_4 \Phi\ten \id(\rho))&=\sup_{\Phi}\lan T, \Phi\ten \id(\rho)\ran=\sup_{\Phi}\lan \Phi, \id\ten T(\rho)\ran \\ &= \norm{\id\ten T(\rho)}{L_1(\M)\hten \M^{op}}.
\end{align*}
Here the bracket is the pairing for $(L_1(\M)\hten \M^{op})^*\cong CB(\M^{op},\M^{op})$ and $\Phi: L_1(\M)\to L_1(\M)$ is a normal map in $CB(\M^{op},\M^{op})$. Then the inequality \eqref{1} implies that
\[ \norm{\id\ten T(\si)}{L_1(\M)\hten \M^{op}}>\norm{\id\ten T(\rho)}{L_1(\M)\hten \M^{op}} \pl\]
which violates ii). This proves the direction ii)$\Rightarrow$ i). For the direction iii)$\Rightarrow$ i), we shall further modify $T$ to get a CPTP map. There exists a projection $e\in \M$ such that $\tau_\M(e)<\infty$ and $\norm{(e\ten 1)\si (e\ten 1)-\si}{1}< \epsilon$. Then for small enough $\epsilon$ we have
\begin{align}\label{2}\tr(\si (e\ten 1)x_4 (e\ten 1)) >\tr(\si x_4)-\epsilon>\sup_\Phi \tr( \operatorname{id}\ten \Phi(\rho)x_4) \pl.\end{align}
Take ${x_5}:=(e\ten 1)x_4 (e\ten 1)=\sum_{j=1}^nc_j\ten d_j\in e\M e\ten \N$ as a finite sum of tensor product of positive operators. Then $x_5^{op}\in e\M e^{op}\ten \N^{op}$ corresponds to the CP map $T_1: L_1(\N)\to e\M e^{op}$ given by
\[ T_1(\omega)=\sum_{j=1}^n\tau_\N(d_j \omega) c_j\pl.\]
By \eqref{2}, we have
\begin{align*}\tr(\si x_5)&>\sup_{\Phi \pl \textnormal{CPTP}} \tr( \operatorname{id}\ten \Phi(\rho)x_4)=\sup_{\Phi \pl \textnormal{CPTP}} \lan \Phi, \id\ten T(\rho) \ran
=\norm{\id\ten T(\rho)}{L_1(\M)\hten \M^{op}}.
\end{align*}
Take the map $T_1(\cdot)=e T(\cdot) e$. Because the map $y\mapsto e y e$ is a complete contraction from $\M^{op}$ to $e\M e^{op}$, we have
\begin{align*}
\norm{\id\ten T(\rho)}{L_1(\M)\hten \M^{op}}\ge \norm{(1\ten e)\id\ten T(\rho)(1\ten e)}{L_1(\M)\hten \M^{op}}=\norm{\id\ten T_1(\rho)}{L_1(\M)\hten e\M e^{op}}\pl.
\end{align*}
On the other hand,
\begin{align*}\tr(\si x_5)=\lan \id_\M , \id\ten T_1(\si) \ran\le \sup_{\Phi \pl \textnormal{CPTP}} \lan \Phi, \id\ten T_1(\si) \ran=\norm{\id\ten T_1(\si)}{L_1(\M)\hten e\M e^{op}}\pl.
\end{align*}
Thus $T_1:L_1(\M)\to e\M e^{op}$ is a CP and CB map and
\begin{align}\norm{\id\ten T_1(\si)}{L_1(\M)\hten e\M e^{op}}>\norm{\id\ten T_1(\rho)}{L_1(\M)\hten e\M e^{op}}\pl.\label{3}\end{align}
Note that $e\M e^{op}\subset L_1(e\M e^{op})$ because $\tau_\M(e)<\infty$. Since $T_1$ is CP and finite rank, we have
\[ \norm{T_1:L_1(\N)\to L_1(e\M e^{op})}{cb}=\norm{T_1:L_1(\N)\to L_1(e\M e^{op})}{}<\infty\pl.\]
Then $T_2=\norm{T_1:L_1(\N)\to L_1(e\M e^{op})}{}^{-1}T_1$ is CPTNI and satisfies  the inequality \eqref{3}. Finally, we modify $T_2$ to be trace preserving.

Denote by $\rho_\M=\id\ten \tau_\N(\rho)$ and $\rho_\N=\tau_\M\ten \id(\rho)$ the reduced density operator of $\rho$ and similarly for $\si$. For the case $\rho_\N=\si_\N$, we define $T_3=T_2+T_0$ where
$T_2(x)=\big(\tr(x)-\tr(\Psi(x))\big) \frac{e}{\tau_\M(e)}$. Then $T_3:L_1(\M)\to L_1(e\M e^{op})$ is CPTP.  We have
\begin{align*}&\id\ten T_3(\rho)=\id\ten T_2(\rho)+ \frac{\la_1}{\tau_\M(e)} \rho_\M\ten e\pl ,\\
&\id\ten T_3(\si)=\id\ten T_2(\si)+ \frac{\la_2}{\tau_\M(e)} \si_\M\ten e\pl ,
\end{align*}
where $\la_1=\tr(\rho_\N)-\tr(T_2(\rho_\N))$ is equal to $\la_2=\tr(\si_\N)-\tr(T_2(\si_\N))$. Note that for any density operator $\omega\in L_1(\M)$ and $\la>0$
\begin{align*}\id\ten T_3(\rho)=\id\ten T_2(\rho)+ \frac{\la_1}{\tau_\M(e)} \rho_1\ten e \le \la \omega \ten e \pl\Longleftrightarrow  \pl \id\ten T_2(\rho)\le (\la\omega-\frac{\la_1}{\tau_\M(e)}\rho_1)\ten e\pl.
\end{align*}
Therefore we have
\begin{align*}\norm{\id\ten T_3(\rho)}{L_1(\M)\hten e\M e^{op}}=&\norm{\id\ten T_2(\rho)}{L_1(\M)\hten e\M e^{op}}+\frac{\la_1}{\tau_\M(e)} \\  <&
\norm{\id\ten T_2(\si)}{L_1(\M)\hten e\M e^{op}}+\frac{\la_1}{\tau_\M(e)}=\norm{\id\ten T_3(\si)}{L_1(\M)\hten e\M e^{op}}\pl.
\end{align*}
Thus $T_3$ is a CPTP map that violates the condition iii). For the case $\rho_\N\neq \si_\N$, we choose $q_1 \in \N $ to be projection onto the support of $(\si_\N-\rho_\N)_+$ and $q_2=1-q_1$. We define the CPTP map $T_4:L_1(\N)\to \M^{op}$ as
\[T_4(x)=\tr_\N(q x)\frac{e_0}{\tau_\M(e_0)}+\tr_\N(q_2x)\frac{e}{\tau_\M(e)}\]
where $e_0< e$ is a projection (by choosing $e$ large enough, we can always assume such $e_0$ exists). Denote $\si_{\M,j}= \operatorname{id}\ten \tr_\N\big((1\ten q_j)\si\big)$ and $\rho_{\M,j}=\operatorname{id}\ten \tr_\N\big((1\ten q_j)\si\big)$ with $j=1,2$. Note that
\begin{align*}&\tr_\M(\si_{\M,1})+ \tr_\M(\si_{\M,2})=\tr_\M(\si_{\M})=1\pl, \tr_\M(\rho_{\M,1})+ \tr_\M(\rho_{\M,2})=\tr_\M(\rho_{\M})=1\pl, \textnormal{ and}\\ &\tr_\M(\si_{\M,1})-\tr_\M(\rho_{\M,1})=\tau\big((1\ten q_j)(\si-\rho)\big)=\tr_\N\big((\si_\N-\rho_\N)q_1\big)>0.
\end{align*}
Therefore,
\begin{align*}
\norm{\operatorname{id}\ten T_4(\si)}{L_1(\M)\hten e\M e^{op}}&=\norm{\si_{\M,1}\ten \frac{e_0}{\tau_\M(e_0)}+ \si_{\M,2}\ten \frac{e}{\tau_\M(e)}}{L_1(\M)\hten e\M e^{op}}
\\ &=\frac{\tr_\M(\si_{\M,1})}{\tau_\M(e_0)}+\frac{\tr_\M(\si_{\M,2})}{\tau_\M(e)}
\\ &> \frac{\tr_\M(\rho_{\M,1})}{\tau_\M(e_0)}+\frac{\tr_\M(\rho_{\M,2})}{\tau_\M(e)}=\norm{\operatorname{id}\ten T_4(\rho)}{L_1(\M)\hten e\M e^{op}}.
\end{align*}
Note that both $T_3$ and $T_4$ are entanglement-breaking.
Then in both case, we reach a contradiction to condition iii). This proves iii)$\Rightarrow$i).
\end{proof}
\begin{rem}{ \rm  In the above work, the only result that uses the injectivity of $\M$ is Proposition \ref{attain}. In fact, Theorem \ref{von} holds for any von Neumann algebras $\M$ for which the conclusions of Proposition  \ref{attain} hold (in particular, if the identify map satisfies item (iii)), even for Type III cases. Proposition  \ref{attain} uses the equivalence between $L_1(\M,L_\infty(\N))$ and $L_1(\M)\hten\N$ for semifinite injective $\M$. It is possible to extend the definition of $L_1(\M,L_\infty(\N))$ and its connection to $L_1(\M)\hten \N$ for non-tracial $\M$. (See \cite{JungeXu} for the case of $L_1(\M,l_\infty)$ for general $\M$.) Nevertheless, beyond the injective case the projective norm $L_1(\M)\hten\N$ loses its connection to the conditional $H_{min}$ entropy by Theorem \ref{injective}.}\end{rem}

We shall now discuss the special case of $\N=l_\infty$. Let $\{\rho_i\}$ and $\{\si_i\}$ be two families of density operators in $L_1(\M)$. Consider the bipartite density operator $\rho,\si\in L_1(\M)\hten l_1\cong l_1(L_1(\M))$ given by
\[\rho=(\la_i\rho_i)_i\pl, \pl   \si=(\la_i\si_i)_i \pl,\]
where $\la_i> 0, \sum_{i=1}^\infty\la_i=1$ is a probability distribution. Then there exists a CPTP map such that $\si=\Phi\ten \id_{l_1}(\rho)$ if and only if
there exists a CPTP map $\Phi$ such that $\si_i=\Phi(\rho_i)$ for each $i$. The latter statement, called the quantum interpolation problem in \cite{tracial}, concerns the convertibility from one family of density operators to another using a quantum process (CPTP map). For finite families of finite dimensional density operators, it was shown in \cite{tracial} that the quantum interpolation problem is solvable by semi-definite programming (SDP). The $H_{min}$ characterization of quantum interpolation problem was used in \cite{gour18} as a key lemma to prove the bipartite matrix case and has applications in the study of  quantum thermal processes. A similar theorem for finite families of self-adjoint operators is obtained in \cite[Theorem 7.6]{tracial}. We will discuss the connection in Section \ref{tracialset}. The following theorem is an extension in two ways: it addresses infinite sequences and density operators on von Neumann algebras.
\begin{theorem}\label{state}Let $\M$ be an injective semi-finite von Neumann algebra. Let $\{\rho_i\}_{i\in \mathbb{N}}$ and $\{\si_i\}_{i\in \mathbb{N}}$ be two countable families of density operators in $L_1(\M)$. TFAE
\begin{enumerate}
\item[i)]there exists a CPTP map such that $\Phi(\rho_i)=\si_i$ for all $i\in \mathbb{N}$
\item[ii)]for any finitely supported probability distribution $(\la_i)_{i\in \mathbb{N}}$ and any set of density operators $\{\omega_i\}\in L_1(\M^{op})\cap \M^{op}$
\[ \norm{\sum_i\la_i \rho_i\ten \omega_i}{L_1(\M)\hten \M^{op}}\le \norm{\sum_i\la_i \si_i\ten \omega_i}{L_1(\M)\hten \M^{op}}\pl.\]
\end{enumerate}
\end{theorem}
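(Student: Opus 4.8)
The plan is to derive Theorem \ref{state} from Theorem \ref{von} by specializing to $\N = l_\infty$. Fix a strictly positive probability distribution $(\la_i)_{i\in\mathbb{N}}$ (so every $\la_i>0$ and $\sum_i\la_i=1$) and assemble the bipartite density operators
\[\rho = \sum_i \la_i \rho_i \ten e_i \pl, \qquad \si = \sum_i \la_i \si_i \ten e_i\]
in $L_1(\M\overline{\ten} l_\infty)=L_1(\M)\hten l_1 = l_1(L_1(\M))$, where $\{e_i\}$ is the standard basis of $l_1 = L_1(l_\infty)$. Both are genuine density operators since $\tau(\rho)=\sum_i\la_i\tau_\M(\rho_i)=1$ and likewise for $\si$. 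Because every $\la_i$ is strictly positive, a CPTP map $\Phi\colon L_1(\M)\to L_1(\M)$ satisfies $\Phi\ten\operatorname{id}(\rho)=\si$ if and only if $\Phi(\rho_i)=\si_i$ for all $i$; thus item (i) of Theorem \ref{state} is exactly item (i) of Theorem \ref{von} for this pair, and the task reduces to transcribing the dual condition of Theorem \ref{von} into the language of families.

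On the dual side, fix a projection $e\in\M$ with $\tau_\M(e)<\infty$. A CPTP map $\Psi\colon l_1\to L_1(e\M e^{op})\cap e\M e^{op}$ is nothing but a sequence of density operators $\omega_i:=\Psi(e_i)\in e\M e^{op}$ (trace preservation forcing $\tau(\omega_i)=1$), and every such $\Psi$ is automatically entanglement-breaking because its domain $l_1$ is commutative: taking the coordinate functionals as a measurement gives $\Psi(x)=\sum_i\tau(e_i^* x)\,\omega_i$. Under this dictionary $\operatorname{id}\ten\Psi(\rho)=\sum_i\la_i\rho_i\ten\omega_i$ and likewise for $\si$, while the inclusion $e\M e^{op}\subset\M^{op}$ is completely isometric (the compression $y\mapsto eye$ is a completely contractive left inverse of the inclusion, so tensoring with $\operatorname{id}_{L_1(\M)}$ exhibits $L_1(\M)\hten e\M e^{op}\hookrightarrow L_1(\M)\hten\M^{op}$ as a complete isometry). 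Hence item (iii) of Theorem \ref{von} is precisely the projective-norm comparison of item (ii), read through the data-processing monotonicity of the norm. In particular (i) $\Rightarrow$ (ii) is immediate, since $\Phi\ten\operatorname{id}$ is a complete contraction on $L_1(\M)\hten\M^{op}$ carrying $\sum_i\la_i\rho_i\ten\omega_i$ to $\sum_i\la_i\si_i\ten\omega_i$.

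For the converse (ii) $\Rightarrow$ (i), I would verify item (iii) of Theorem \ref{von} for the fixed infinite state $\rho$ and then invoke the theorem. Given $e$ and a sequence $\{\omega_i\}\subset e\M e^{op}$, apply (ii) to the renormalized finitely supported distribution $\la^{(N)}_i:=\la_i/\big(\sum_{j\le N}\la_j\big)$ for $i\le N$ (and $0$ otherwise) together with the truncated family $\{\omega_i\}_{i\le N}$; the normalizing constant cancels from both sides, yielding the comparison for the partial sums $\sum_{i\le N}$. Letting $N\to\infty$, these converge to the full sums in $L_1(\M)\hten e\M e^{op}$, because $\operatorname{id}\ten\Psi$ is completely bounded and $\sum_{i>N}\la_i\rho_i\ten e_i\to 0$ in $l_1(L_1(\M))$ (its norm equals $\sum_{i>N}\la_i$), so $\operatorname{id}\ten\Psi\big(\sum_{i>N}\la_i\rho_i\ten e_i\big)\to 0$ and similarly for $\si$. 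Passing to the limit delivers item (iii) of Theorem \ref{von}, which then produces the desired CPTP map $\Phi$ with $\Phi(\rho_i)=\si_i$ for all $i$.

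The step I expect to be the crux is this reconciliation of quantifiers: Theorem \ref{von} is stated for a single bipartite state, which forces the weights $(\la_i)$ to be genuinely infinite and strictly positive so that item (i) captures all indices at once, whereas Theorem \ref{state}(ii) ranges over finitely supported distributions. The truncation argument bridges the gap, and its legitimacy rests entirely on the facts that $\rho,\si$ live in $l_1(L_1(\M))$ and that $\operatorname{id}\ten\Psi$ is completely bounded, so that tails are uniformly small in the projective norm. Once this convergence is secured the equivalence is a direct transcription of Theorem \ref{von}.
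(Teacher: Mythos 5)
Your proposal follows the same route as the paper: specialize Theorem \ref{von} to $\N=l_\infty$, identify a CPTP map $\Psi\colon l_1\to L_1(e\M e^{op})\cap e\M e^{op}$ with a sequence of density operators $\omega_i=\Psi(e_i)$ (automatically entanglement-breaking), and bridge the finite/infinite quantifier mismatch by truncation using the $\ell_1$-summability of the weights. The one substantive difference is the logical direction. The paper runs the hard implication contrapositively: if no $\Phi$ exists, Theorem \ref{von} produces a \emph{specific} violating $\Psi$, which by the construction in that proof is finite rank (hence CB from $l_1$ into $\M^{op}$), and only for this $\Psi$ does one need the tail estimate $\norm{\id\ten\Psi(\si-\si_N)}{L_1(\M)\hten\M^{op}}\lesssim \sum_{i>N}\mu_i$. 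You instead verify condition (iii) of Theorem \ref{von} directly for the infinite state, which forces you to pass to the limit in the $L_1(\M)\hten e\M e^{op}$ norm for an \emph{arbitrary} $\Psi$. Your justification, ``$\id\ten\Psi$ is completely bounded,'' is exactly the point that needs an argument: boundedness of $\id\ten\Psi$ from $l_1(L_1(\M))$ into $L_1(\M)\hten e\M e^{op}$ requires $\sup_i\norm{\omega_i}{e\M e^{op}}<\infty$, and a sequence of density operators in $L_1(e\M e^{op})\cap e\M e^{op}$ need not be uniformly bounded in operator norm (think of $\M$ commutative with a diffuse trace). This is closable --- if $\Psi$ is required to map all of $l_1$ into the intersection, the closed graph theorem forces $\sup_i\norm{\omega_i}{\infty}<\infty$, and maps out of $l_1$ are CB with $\norm{\Psi}{cb}=\sup_i\norm{\omega_i}{\infty}$ --- but as written it is an unproved step, and it is precisely the step the paper's contrapositive formulation is designed to avoid. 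Finally, note that your argument (like the paper's own proof and the $B(H)$ restatement in Section \ref{BH case}) actually establishes condition (ii) with the inequality $\norm{\sum_i\la_i\rho_i\ten\omega_i}{}\ge\norm{\sum_i\la_i\si_i\ten\omega_i}{}$; the ``$\le$'' in the printed statement of Theorem \ref{state}(ii) is a typo, which you have silently corrected.
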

\begin{proof} Choose a probability distribution $(\mu_i)_{i\in \mathbb{N}}$ such that $\mu_i>0$ for each $i\in \mathbb{N}$.  Let $\rho=(\mu_i\rho_i)$ and $\si=(\mu_i\si_i)$ be density operators in $L_1(\M)\hten l_1\cong l_1(L_1(\M))$. Then i)$\Rightarrow$ ii) again follows from the factorization $\Phi\ten\id(\rho)=\si$ and
\[\norm{\Phi:L_1(\M)\to L_1(\M)}{cb}\le 1\pl.\]
Assume that such $\Phi$ does not exists. Then by Theorem \ref{von}
there exists a CPTP map $\Psi:l_1^\infty\to L_1(\M^{op})\cap \M^{op}$ such that
\[ \norm{\operatorname{id}\ten \Psi(\si)}{L_1(\M)\hten \M ^{op}}>\norm{\operatorname{id}\ten \Psi(\rho)}{L_1(\M)\hten \M ^{op}}\pl.\]
Note that the map $\Psi$ constructed is also CB from $l_1^\infty$ to $\M^{op}$. We can choose $N$ such that $\sum_{i>N}\mu_i<\epsilon$. Write $\rho_N=(\rho_i)_{i\le N}\oplus 0$ and $\si_N=(\si_i)_{i\le N}\oplus 0$ as  the corresponding truncated sequences. Then
\[\norm{\operatorname{id}\ten \Psi(\si)-\operatorname{id}\ten \Psi(\si_N)}{L_1(\M)\hten \M ^{op}}\le \norm{\si-\si_N}{L_1(\M)\hten l_1}\le \sum_{i>N}\mu_i<\epsilon\pl.\]
For large enough $N$, we have
\begin{align*}
\norm{\operatorname{id}\ten \Psi(\si_N)}{L_1(\M)\hten \M ^{op}}\ge & \norm{\operatorname{id}\ten \Psi(\si)}{L_1(\M)\hten \M ^{op}}-\epsilon\\ >&\norm{\operatorname{id}\ten \Psi(\rho)}{L_1(\M)\hten \M ^{op}}\\ \ge&
\norm{\operatorname{id}\ten \Psi(\rho_N)}{L_1(\M)\hten \M ^{op}}\pl.
\end{align*}
Write $\omega_i=\Psi(e_i)$ where $e_i$ is the standard basis of $l_1$. We have \[\operatorname{id}\ten \Psi(\si_N)=\sum_{i\le N}\mu_i\si_i\ten \omega_i\pl, \operatorname{id}\ten \Psi(\rho)=\sum_{i\le N}\mu_i\rho_i\ten \omega_i\pl.\]
Renormalizing the coefficient $\la_i=\mu_i(\sum_{i=1}^N\mu_i)^{-1}$, we have a violation of ii). This completes the proof.
\end{proof}

Note that the condition ii) above only concerns finite subsets of $\{\rho_i\}$ and $\{\si_i\}$. This leads to the following ``compactness'' result. It says that to ask whether there is a CPTP map that sends an infinite family of density operators to another infinite family of density operators,
it suffices to check  the convertibility for every finite subfamily of the two infinite families.
\begin{cor}\label{cor:finitesub}
Let $\{\rho_i\}_{i\in \mathbb{N}}$ and $\{\si_i\}_{i\in \mathbb{N}}$ be two infinite families of density operators in $L_1(\M)$. There exists a CPTP map $\Phi$ such that $\Phi(\rho_i)=\si_i$ for all $i\in \mathbb{N}$ if and only if for any finite subset $I\subset \mathbb{N}$, there exists a CPTP map $\Phi_I(\rho_i)=\si_i$ for all $i\in I$.
\end{cor}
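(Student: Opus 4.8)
The plan is to read the corollary off of Theorem \ref{state}, exploiting the fact that its condition (ii) is \emph{finitary}: it ranges only over finitely supported probability distributions $(\la_i)$, so each instance of it involves only finitely many of the operators $\rho_i,\si_i$. The ``only if'' direction is immediate, since a global CPTP map $\Phi$ with $\Phi(\rho_i)=\si_i$ for all $i$ is literally the same map that converts every finite subfamily; thus all of the content lies in the converse, and I will treat it as a local-to-global (compactness) statement.

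For the ``if'' direction I assume that for every finite $I\subset\mathbb N$ there is a CPTP map $\Phi_I$ with $\Phi_I(\rho_i)=\si_i$ for all $i\in I$, and I verify condition (ii) of Theorem \ref{state} for the full infinite families. Fix an arbitrary finitely supported probability distribution $(\la_i)_{i\in\mathbb N}$ and density operators $\{\omega_i\}\subset L_1(\M^{op})\cap\M^{op}$, and set $I=\supp(\la)$, which is finite. The hypothesis provides a CPTP map $\Phi_I$ converting $\{\rho_i\}_{i\in I}$ to $\{\si_i\}_{i\in I}$, so the easy direction (i)$\Rightarrow$(ii) of Theorem \ref{state}, applied to this finite subfamily, yields
\[\norm{\sum_{i\in I}\la_i \rho_i\ten \omega_i}{L_1(\M)\hten \M^{op}}\le \norm{\sum_{i\in I}\la_i \si_i\ten \omega_i}{L_1(\M)\hten \M^{op}}\pl.\]
Since $\la_i=0$ for $i\notin I$, the terms outside $I$ vanish regardless of the choice of $\omega_i$ there, so these sums agree with the corresponding sums over all of $\mathbb N$; this is exactly condition (ii) for the infinite families. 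As $(\la_i)$ and $\{\omega_i\}$ were arbitrary, condition (ii) holds in full, and the hard direction (ii)$\Rightarrow$(i) of Theorem \ref{state} then produces a single CPTP map $\Phi$ with $\Phi(\rho_i)=\si_i$ for all $i\in\mathbb N$, which is the claim.

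I expect essentially no obstacle here, as all the analytic machinery (the Hahn-Banach separation in Theorem \ref{von}, the duality between $L_1(\M)\hten\M^{op}$ and completely bounded maps, and the use of injectivity via Proposition \ref{attain}) is already absorbed into Theorem \ref{state}. The only point deserving a moment's care is the legitimacy of applying Theorem \ref{state} to a finite index set $I$ rather than to all of $\mathbb N$: this is harmless, since $\{\rho_i\}_{i\in I}$ may be regarded as a countable family by padding with zeros (or by repeating a single entry), under which condition (ii) restricted to distributions supported in $I$ is unchanged. Throughout I retain the standing assumption that $\M$ is an injective semifinite von Neumann algebra, inherited from Theorem \ref{state}.
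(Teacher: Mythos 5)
Your proof is correct and is precisely the argument the paper intends: the corollary is stated there as an immediate consequence of Theorem \ref{state}, justified only by the remark that condition (ii) is finitary, and you have simply made that local-to-global reduction explicit (including the harmless padding of a finite subfamily to a countable one). One caveat that does not affect your logic: the inequality in Theorem \ref{state}(ii) as printed appears to have its direction reversed relative to Theorem \ref{von} and to the proof of Theorem \ref{state} itself (the source side $\rho$ should dominate), but since you invoke both directions of the theorem with the inequality used consistently, your argument goes through verbatim once that sign is fixed.
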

\subsection{Channel factorization} \label{channel factorization}

The dual picture of quantum majorization is channel factorization: given two CPTP maps $T,S$, determine if there exists a third  CPTP $\Phi$ such that $\Phi\circ T=S$. Such a factorization relation for two CPTP maps has many implications in quantum information theory. In particular, the channel $T$ has larger capacity than $S$ for various communication task. For a finite dimensional CPTP map $\Phi:M_n\to M_m$, its Choi matrix is
\[\chi_\Phi=\sum_{i,j=1}^n e_{ij}\ten \Phi(e_{ij})\pl \]
where $e_{ij}$ are the matrix unit in $M_n$. As noted in \cite{gour18}, for two CPTP map $S,T:M_n\to M_m$, there exists a CPTP $\Phi$ such that $\Phi\circ T=S$ if and only if
there exists a CPTP $\Phi$ such that $\id\ten \Phi(\chi_T)=\chi_S$. So in finite dimensions channel factorization corresponds to quantum majorization of Choi matrices. However, in the infinite dimensional case, such a correspondence fails because the Choi matrix of a CPTP map is never a density operator (since its trace is unbounded). We shall use again the duality $CB(L_1(\M),L_1(\M))\subset (L_1(\M)\hten\M^{op})^*$ to give a characterization of channel factorization in infinite dimensions and von Neumann algebras. We start with a lemma.

\begin{lemma}\label{convex2}Let $T:L_1(\N)\to L_1(\M)$ be a CPTP map. Define the set of CPTP maps
\begin{align*}&C_{post}(T)=\{\Phi\circ T\pl |\pl  \Phi:L_1(\M)\to L_1(\M) \pl \text{CPTP} \pl \}\pl, \\ &C_{pre}(T)=\{T\circ \Phi\pl |\pl  \Phi:L_1(\M)\to L_1(\M) \pl \text{CPTP} \pl \}\pl.\end{align*}
Then both $C_{post}(T)$ and $C_{pre}(T)$ are weak$^*$-closed in $(L_1(\N)\hten \M^{op})^*=CB(\M^{op},\N^{op})$. Namely, both set are closed in the point-weak topology.
\end{lemma}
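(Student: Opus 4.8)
The plan is to adapt the argument of Lemma \ref{convex} to the dual (map) picture, treating $C_{post}(T)$ and $C_{pre}(T)$ symmetrically. Consider first $C_{post}(T)$. I take a net of CPTP maps $\Phi_\alpha:L_1(\M)\to L_1(\M)$ with $\Phi_\alpha\circ T\to S$ in the weak$^*$ topology of $CB(\M^{op},\N^{op})=(L_1(\N)\hten\M^{op})^*$, and show $S\in C_{post}(T)$. The adjoints $\Phi_\alpha^\dag:\M^{op}\to\M^{op}$ are normal and unital completely positive, so they lie in the weak$^*$-compact unit ball of $CB(\M^{op},\M^{op})=(L_1(\M)\hten\M^{op})^*$ (the cb norm of a UCP map equals $1$). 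By Banach--Alaoglu I pass to a subnet $\Phi_\beta^\dag$ converging in the point-weak$^*$ topology to a limit $\Psi:\M^{op}\to\M^{op}$, which is again UCP (positivity and unitality pass to weak$^*$ limits) but in general no longer normal.

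The next step carries the convergence through the composition with $T$. Since $T$ is CPTP, its adjoint $T^\dag:\M^{op}\to\N^{op}$ is normal, hence weak$^*$--weak$^*$ continuous; therefore for each $b\in\M^{op}$ and $\omega\in L_1(\N)$ I get $\langle T^\dag\Phi_\beta^\dag(b),\omega\rangle=\langle\Phi_\beta^\dag(b),T\omega\rangle\to\langle\Psi(b),T\omega\rangle=\langle T^\dag\Psi(b),\omega\rangle$, so that $S^\dag=T^\dag\circ\Psi$. To replace $\Psi$ by a genuine CPTP map I follow the normal/singular splitting of Lemma \ref{convex}: restricting the double adjoint of $\Psi$ to $L_1(\M)\subset(\M^{op})^*$ and decomposing $(\M^{op})^*=L_1(\M)\oplus L_1(\M)^\perp$ gives $\Psi=\Psi_n+\Psi_s$, with $\Psi_n:L_1(\M)\to L_1(\M)$ a normal, completely positive, trace non-increasing (CPTNI) part and $\Psi_s$ singular.

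The main obstacle, exactly as in Lemma \ref{convex}, is to show that the singular part does not contribute, i.e. that $T^\dag\circ\Psi_s$ is killed when paired against $L_1(\N)$. The key identity is that, for fixed $\omega\in L_1(\N)$, the singular functional $\langle\Psi_s(T\omega),\cdot\rangle=\langle S(\cdot),\omega\rangle-\langle\Psi_n(T\omega),\cdot\rangle$ is also normal, being the difference of the normal functional associated with the limit $S$ (tested, as in Lemma \ref{convex}, against the normal elements of $\M^{op}\ten_{\min}\N^{op}$) and the normal functional $\langle\Psi_n(T\omega),\cdot\rangle$; a functional that is simultaneously normal and singular vanishes, so $\Psi_s$ contributes nothing and $S^\dag=T^\dag\circ\Psi_n$ with $\Psi_n$ CPTNI. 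Finally I upgrade $\Psi_n$ to a CPTP map by the rank-correction used at the end of Lemma \ref{convex} and in Theorem \ref{von}: setting $\Phi=\Psi_n+\Phi_0$ with $\Phi_0(x)=\tau_\M\big(x-\Psi_n(x)\big)\,\omega_0$ for a fixed density operator $\omega_0\in L_1(\M)$ gives a CPTP map; and because the trace lost by $\Psi_n$ is precisely the (now vanishing) singular mass on the range of $T$, one has $\Phi_0\circ T=0$, so $\Phi\circ T=\Psi_n\circ T=S$ and $S\in C_{post}(T)$.

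The argument for $C_{pre}(T)$ is entirely parallel, with the fixed normal map $T^\dag$ composed on the inside. Writing $(T\circ\Phi_\beta)^\dag=\Phi_\beta^\dag\circ T^\dag$ and using that $T^\dag$ sends each $b\in\M^{op}$ to the fixed element $T^\dag b\in\N^{op}$, the point-weak$^*$ convergence $\Phi_\beta^\dag\to\Psi$ (now on the relevant copy of $\N^{op}$) yields $S^\dag=\Psi\circ T^\dag$; the same normal/singular splitting of $\Psi$, the same cancellation of the singular part, and the same CPTNI-to-CPTP correction then place $S$ in $C_{pre}(T)$. I expect the singular-part cancellation to be the only delicate point, since the compactness step, the passage through the normal map $T^\dag$, and the final correction are all routine once Lemma \ref{convex} and Lemma \ref{map} are in hand.
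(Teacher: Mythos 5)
For the $C_{post}(T)$ half your argument is essentially the paper's proof, merely transposed to the adjoint side: the same Banach--Alaoglu subnet extraction (the paper extracts a point-weak$^*$ limit of the $\Phi_\beta$ in $(L_1(\M)\hten\M^{op})^*$, you extract it for the $\Phi_\beta^\dag$, which is the same convergence), the same normal/singular decomposition of the limit through $(\M^{op})^*=L_1(\M)\oplus L_1(\M)^{\perp}$, the same observation that $\Phi_s(T\omega)$ is simultaneously singular and equal to a difference of normal functionals (hence zero), and the same CPTNI-to-CPTP correction with $\Phi_0\circ T=0$. Note that, exactly as in the paper, this cancellation uses that the limit $S$ is \emph{given} as a normal map $L_1(\N)\to L_1(\M)$; this is the ``point-weak'' reading of the statement, and it is the one the paper intends and uses in Theorem \ref{pre}.

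The place where your write-up is too quick is the claim that $C_{pre}(T)$ is ``entirely parallel'' with ``the same cancellation of the singular part.'' It is not parallel: for $C_{post}$ the singular map sits on the \emph{outside}, so $\Phi_s(T\omega)$ is genuinely singular and normality of $S$ kills it; for $C_{pre}$ the limit is $S(x)=T^{\dag\dag}(\Psi_n^*x)+T^{\dag\dag}(\Psi_s^*x)$, and the second adjoint of a normal map need not send singular functionals to singular ones. Concretely, if $T=\tau_\N:L_1(\N)\to\mathbb{C}$ and $\phi$ is a singular state on $\N^{op}$, then $\phi\circ T^\dag=\phi(1)\,\id_{\mathbb C}$ is normal and nonzero; taking $\Psi_k(\rho)=U_k\rho U_k^*$ for isometries $U_k$ with $U_kU_k^*\to 0$ weakly gives a limit with $\Psi_n=0$ yet $S=\tau_\N\neq T\circ\Psi_n$. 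So one cannot conclude $T^{\dag\dag}\circ\Psi_s=0$ from normality of $S$, and the identity $S=T\circ\Psi_n$ can actually fail. To be fair, the paper's own proof makes exactly the same unjustified jump (``$S=T^{\dag\dag}\circ\Psi_n$''), so you have faithfully reproduced its argument, gap included; but as written the $C_{pre}$ case needs a separate justification (e.g.\ absorbing the mass $\Psi_s^*x(1)$ into a corrected CPTP map and showing the resulting error vanishes), and flagging it as ``routine'' is the one genuinely misleading sentence in your proposal.
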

\begin{proof}We first argue for $C_{post}(T)$. Let
$(\Phi_\al)$ be a net such that $\Phi_\al\circ T\to S$ in the weak$^*$-topology. That is, for any $x\in L_1(\N), y\in \M$
\begin{align}\label{4}\lim_\al\tau_\M(y\Phi_\al\circ T(x))= \tau_\M(yS(x))\pl.\end{align}
Let $(\Phi_\beta)$ be a sub-net such that $\Phi_\beta\to \Phi$ for some $\Phi:L_1(\N)\to (\M^{op})^*$ in the weak$^*$-topology $CB(L_1(\N), (\M^{op})^*)\cong (L_1(\N)\hten \M^{op})^*$. Note that
\[CB(L_1(\N), (\M^{op})^*)\cong CB(\M^{op},\N^{op})\] by taking adjoint. The map $\Phi^\dag$ is UCP because for any positive $x\in L_1(\N)$
\[\tau_\M(x\Phi^\dag(1))=\lim_{\al}\tau_\M(x\Phi_\al^\dag(1))=\lim_\al\tau_\M(\Phi_\al(x))=\tau_\N(x)\]
We have $\Phi_\beta\circ T\to \Phi\circ T$ because for any $x\in L_1(\N), y\in \M$
\[\lim_\al\tau_\M(y\Phi_\beta\circ T(x))=\lim_\al\tau_\M(\Phi_\beta^\dag(y)T(x))= \tau_\M(\Phi^\dag(y)T(x))=\Phi\circ T(x)(y^{op})\pl.\]
where $\Phi\circ T(x)\in (\M^{op})^*$. Then by \eqref{4}, $\Phi\circ T(x)=S(x)\in L_1(\M)$. This implies $\Phi_n\circ T=S$ for $\Phi_n:L_1(\M)\to L_1(\M)$ being the normal part of $\Phi$.
Since $\Phi_n^\dag$ is normal CP and sub-unital, $\Phi_n$ is CPTNI. Define $\Phi_0(\rho)=\tau_\M(\Phi_n(\rho)-\rho)\si$ where $\si$ is some density operator. Then $\tilde{\Phi}=\Phi_n+\Phi_0$ is CPTP. Moreover, $\Phi_0\circ T=0$ because both $\tilde{\Phi}\circ T$ and $\Phi_n\circ T=S$ are CPTP. Thus, we obtain $\tilde{\Phi}\circ T=\Phi_n\circ T=S$.

For $C_{pre}(T)$, let $\Psi_\al$ be a net such that $T\circ\Phi_\al \to S$ in the weak$^*$-topology. Let $\Psi_\beta$ be a sub-net of $\Psi_\al$ such that $\Psi_\beta\to \Psi$ for some
$\Phi\in CB(L_1(\N), (\M^{op})^*)$. For any $x\in L_1(\N)$ and $y\in \M$,
\[\lim_{\beta}\tau(yT\circ \Psi_\beta(x))=\lim_{\beta}\tau(T^\dag(y) \Psi_\beta(x))=\Psi(x)(T^\dag(y))=T^{\dag\dag}\circ \Psi(x)(y)\]
This means $T\circ \Psi_\beta\to T^{\dag\dag}\circ \Psi$ in the weak$^*$-topo of $CB(L_1(\N), (\M^{op})^*)$. Let $\Psi_n$ be the normal part of $\Psi$. Since $T^{\dag\dag}|_{L_1(\M)}=T$, we have
\[S=T^{\dag\dag}\circ \Psi_n=T^{\dag\dag}|_{L_1(\M)}\circ \Psi_n=T\circ \Psi_n\pl.\]
The argument to modify $\Phi_n$ to be CPTP is similar.
\end{proof}

We say a bipartite density operator $\rho\in L_1(\M\overline{\ten}\N)$ is \emph{separable} if $\rho$ can be written as $\rho=\sum_{j=1}^{\infty}\la_j\omega_j\ten \si_j$,
for some $\la_j\ge 0, \sum_{j=1}^\infty\la_j=1$ and $\omega_j\in L_1(\M),\si_j\in L_1(\N)$ are density operators.
\begin{theorem}\label{pre} Assume that $\M$ is injective. Let $T,S:L_1(\N)\to L_1(\M)$ be two CPTP maps. TFAE
\begin{enumerate}
\item[i)] there exists a CPTP $\Phi:L_1(\M)\to L_1(\M)$ such that $\Phi\circ T= S$
\item[ii)]for any projection $e\in\M$ with $\tau(e)<\infty$ and any separable density operator $\rho\in L_1(\N)\ten e\M e^{op} $,
    \[ \norm{T\ten \operatorname{id}(\rho)}{L_1(\M)\hten e\M e^{op} }\ge \norm{S\ten \operatorname{id}(\rho)}{L_1(\M)\hten e\M e^{op} }\]
\end{enumerate}
\end{theorem}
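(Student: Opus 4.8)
The plan is to establish this as the channel–factorization dual of Theorem \ref{von}, replaying the Hahn--Banach argument of that proof with the roles of states and maps interchanged. The direction i)$\Rightarrow$ii) is immediate: if $S=\Phi\circ T$ for a CPTP map $\Phi$, then $S\ten\id(\rho)=(\Phi\ten\id)(T\ten\id(\rho))$, and since $\norm{\Phi}{cb}=1$ the induced $\Phi\ten\id$ is a contraction on $L_1(\M)\hten e\M e^{op}$, giving $\norm{S\ten\id(\rho)}{}\le\norm{T\ten\id(\rho)}{}$ for every $\rho$. For the converse I argue by contraposition: if no CPTP $\Phi$ satisfies $\Phi\circ T=S$, then $S\notin C_{post}(T)$, and by Lemma \ref{convex2} this set is weak$^*$-closed and convex in $(L_1(\N)\hten\M^{op})^*$. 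The Hahn--Banach separation theorem then furnishes a predual element $z\in L_1(\N)\hten\M^{op}$ with
\[\Re\lan S,z\ran>\sup_{\Phi\,\text{CPTP}}\Re\lan\Phi\circ T,z\ran.\]

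Next I would reduce $z$ to a positive separable finite tensor, paralleling the passage $x_1\to x_4$ in Theorem \ref{von}. First approximate $z$ in projective norm by a finite tensor $z_2=\sum_j\si_j\ten w_j$ with $\si_j\in\N_0$ and $w_j\in\M$; since every $R\in\{S\}\cup C_{post}(T)$ has $cb$-norm $1$, the pairing error is uniformly small and the strict inequality survives. Passing to the real part $z_3=(z_2+z_2^*)/2$ and splitting each factor into positive and negative parts writes $z_3$ as a signed sum of product tensors. The offending negative terms are absorbed via $-\si\ten w^- = \si\ten(\norm{w^-}{\infty}1_\M-w^-)-\norm{w^-}{\infty}\,\si\ten 1_\M$: the first summand is a genuine positive tensor in the predual, while each correction $c\,\si\ten 1_\M$ pairs with every CPTP map $R$ as $\lan R,\si\ten1_\M\ran=\tau_\M(R(\si))=\tau_\N(\si)$ by trace preservation, hence shifts both sides by the same constant. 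This yields a positive separable $z_4=\sum_m\alpha_m\ten\beta_m$ (with $\alpha_m\ge0$ in $\N_0$ and $\beta_m\ge0$ in $\M$) still obeying the strict separation, and positivity lets me drop the real parts.

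With $z_4$ in hand I convert the separation into a norm inequality through Proposition \ref{attain}. Writing $\lan\Phi\circ T,z_4\ran=\lan\Phi,(T\ten\id)(z_4)\ran$ and $\lan S,z_4\ran=\lan\id,(S\ten\id)(z_4)\ran$, positivity of $(T\ten\id)(z_4)$ and $(S\ten\id)(z_4)$ together with Proposition \ref{attain} give $\sup_\Phi\lan\Phi,(T\ten\id)(z_4)\ran=\norm{(T\ten\id)(z_4)}{L_1(\M)\hten\M^{op}}$ and $\lan\id,(S\ten\id)(z_4)\ran\le\norm{(S\ten\id)(z_4)}{L_1(\M)\hten\M^{op}}$, so the separation reads $\norm{(S\ten\id)(z_4)}{L_1(\M)\hten\M^{op}}>\norm{(T\ten\id)(z_4)}{L_1(\M)\hten\M^{op}}$. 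To descend to a finite corner I compress $z_4^e=(1\ten e)z_4(1\ten e)\in L_1(\N)\ten e\M e^{op}$: the map $\id\ten(e\cdot e)$ is a contraction $L_1(\M)\hten\M^{op}\to L_1(\M)\hten e\M e^{op}$, so the $T$-side can only shrink, while testing the $S$-side against the compression $c(x)=exe$ (a CPTNI test admissible in Proposition \ref{attain}) recovers $\lan S,z_4\ran$ up to an error $\to0$ as $e\nearrow1$ in $L_1$. Choosing $\tau_\M(e)<\infty$ inside the separation gap and normalizing $z_4^e$ by its trace produces a separable density operator $\rho$ violating condition ii).

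The main obstacle I expect is the positivity reduction carried out entirely within the predual $L_1(\N)\hten\M^{op}$: unlike Theorem \ref{von}, where the separating element lives among bounded operators and one may simply add $K\,1\ten1$, here $1_\N\notin L_1(\N)$, so the identity corrections cannot be added as predual elements and must instead be justified as constants through trace preservation of the competing CPTP maps. A secondary technical point is the simultaneous control of the two norms under compression to $e\M e^{op}$ — keeping the $S$-side above the threshold while bounding the $T$-side by the uncompressed norm — which rests on the asymmetric use of the contractivity of $\id\ten(e\cdot e)$ and the admissibility of $c$ as a test map. Injectivity of $\M$ enters only through Proposition \ref{attain}, exactly as in Theorem \ref{von}.
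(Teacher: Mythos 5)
Your proposal is correct and follows essentially the same route as the paper's proof: Hahn--Banach separation from the weak$^*$-closed convex set $C_{post}(T)$ of Lemma \ref{convex2}, approximation by a finite tensor, reduction to a positive separable element using trace preservation to absorb the identity corrections, conversion to projective norms via Proposition \ref{attain}, and compression by $1\ten e$ to land in a finite corner before normalizing. The only cosmetic difference is that the needed corrections $c\,\si\ten 1_\M$ sit in the $\M^{op}$ factor, where $1_\M$ is available, so they are honest predual elements of $L_1(\N)\hten\M^{op}$ (the paper writes $x_3=\sum_j a_j\ten b_j+\norm{b_j}{}(|a_j|\ten 1)$ explicitly); your stated worry about $1_\N\notin L_1(\N)$ does not actually arise.
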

\begin{proof}i)$\Rightarrow$ ii) follows from $(\Phi\circ T)\ten \id(\rho)=S\ten \id (\rho)$. For ii) $\Rightarrow$ i), we again argue by contradiction. Suppose $S\notin C_{post}(T)=\{\Phi\circ T| \pl \Phi \pl \text{CPTP} \}$. Then by Lemma \ref{convex2},
there exists
$x_1\in L_1(\N)\hten \M^{op}$ such that
\[Re \lan  S,x_1\ran> Re\sup_{\Phi \pl \textnormal{CPTNI}} \lan \Phi\circ T, x_1\ran\pl.\]
We can replace $x_1$ by a finite tensor sum
$x_2=\sum_{j=1}^n a_j\ten b_j$ with $\norm{x_1-x_2}{L_1(\N)\hten \M^{op}}$ small enough. Moreover, following the same argument in \eqref{5}, $a_j\in L_1(\N)$ and $b_j\in \M^{op}$ can be self-adjoint. Note that for any $\omega\in L_1(\N)$,
\[\lan S, \omega\ten 1\ran=\tr_\M(S(\omega))=\tr_\N(\omega)=\tr_\M(\Phi\circ T(\omega))=\lan \Phi\circ T,  \omega \ten 1\ran\]
because $S$ and $\Phi\circ T$ are trace preserving. Then we can replace $x_2$ by
\[x_3=\sum_{j}a_j\ten b_j+\norm{b_j}{} (|a_j|\ten 1)=\sum_{j} (a_j)_+\ten (\norm{b_j}{}1+b_j) +(a_j)_-\ten (\norm{b_j}{}1-b_j)\pl  \]
which is a finite tensor of positive elements. Let $e\in \M$ be a projection with finite trace such that
\[\left|\sum_{j}\tau_\M(b_j^{op}S(a_j))- \sum_{j}\tau_\M(e b_j^{op} eS(a_j))\right|<\epsilon\pl.\]
Take $x_4=(1\ten e)x_3 (1\ten e)$. We have for small $\epsilon$
\begin{align} \lan  S,x_4\ran> \lan  S,x_3\ran-\epsilon > \sup_{\Phi } \lan \Phi\circ T, x_3\ran. \label{ineq}\end{align}
Now we reinterpret the duality pairing
\begin{align*}
\lan  S,x_4\ran=\lan  \operatorname{id}, S\ten \id(x_4)\ran \le& \sup_{\Phi\pl CPTP}\lan \Phi, S\ten \id(x_4)\ran= \norm{S\ten \id(x_4)}{L_1(\M)\hten e\M e^{op}}\pl.
\\
\sup_{\Phi \pl CPTP} \lan \Phi\circ T, x_3\ran= &\sup_{\Phi\pl CPTP}\lan \Phi,  T\ten \id(x_3)\ran= \norm{ T\ten id (x_3)}{L_1(\M)\hten e\M e^{op}}\\ \ge& \norm{ T\ten id (x_4)}{L_1(\M)\hten e\M e^{op}}
\end{align*}
Thus we have a violation of ii),
\[\norm{ S\ten id (x_4)}{L_1(\M)\hten e\M e^{op}}>\norm{ T\ten id (x_4)}{L_1(\M)\hten e\M e^{op}}\pl.\]
Here $x_4\in L_1(\M)\hten e\M e^{op}$ is a finite tensor of positive element with finite trace. Replacing $x_4$ by its normalization, we get a separable density operator. This completes the proof.
\end{proof}
The above theorem gives the characterization for ``post''-factorization. Similarly,
we consider the ``pre''-factorization, which is equivalent to the ``post''-factorization of normal UCP maps.

\begin{theorem}\label{post} Assume that $\M$ is injective. Let $T,S:L_1(\M)\to L_1(\N)$ be two CPTP maps. TFAE
\begin{enumerate}
\item[i)]there exists a CPTP   $\Phi:L_1(\M)\to L_1(\M)$ such that $T\circ \Phi= S$,
  \item[ii)]for any positive $x\in \N^{op}\ten \M $,
    \[ \norm{T^\dag\ten \operatorname{id}(x)}{\M^{op}\overline{\ten} \M  }\le \norm{S^\dag\ten \operatorname{id}(x)}{\M^{op}\overline{\ten} \M}\]
\end{enumerate}
\end{theorem}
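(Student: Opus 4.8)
The plan is to transpose the problem to the Heisenberg picture via adjoints and then realize the factorizing map by an extension argument. Writing $T^\dag,S^\dag:\N^{op}\to\M^{op}$ for the normal unital completely positive adjoints of $T,S$, the identity $T\circ\Phi=S$ is equivalent, upon taking adjoints, to $S^\dag=W\circ T^\dag$ with $W=\Phi^\dag:\M^{op}\to\M^{op}$ a normal UCP map. Thus i) asserts exactly that $S^\dag$ post-factors through $T^\dag$ by a normal UCP map, which is the ``post-factorization of UCP maps'' referred to just before the statement.

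For the easy direction, if $S^\dag=W\circ T^\dag$ with $W$ normal UCP, then $W$ is a complete contraction, hence so is the amplification $W\,\overline{\ten}\,\id_\M$ on the spatial tensor product $\M^{op}\overline{\ten}\M$. Since $S^\dag\ten\id(x)=(W\,\overline{\ten}\,\id)(T^\dag\ten\id(x))$, passing through $W\,\overline{\ten}\,\id$ can only decrease the operator norm, which yields the comparison in ii). Note that the orientation forced by contractivity places $T^\dag\ten\id(x)$ on the \emph{larger} side, i.e. the displayed inequality in ii) must be read as $\norm{T^\dag\ten\id(x)}{\M^{op}\overline{\ten}\M}\ge\norm{S^\dag\ten\id(x)}{\M^{op}\overline{\ten}\M}$.

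For the converse I would build $W$ by extension. On the (unital, self-adjoint) operator system $\mathcal{S}\subset\M^{op}$ generated by the range $T^\dag(\N^{op})$, set $W_0(T^\dag(a))=S^\dag(a)$. Read at all matrix amplifications — which the $\M^{op}\overline{\ten}\M$ norm records, after tensoring with $M_n$ if $\M$ itself does not accommodate the matrix corners — condition ii) gives $\norm{(S^\dag\ten\id_n)(x)}{}\le\norm{(T^\dag\ten\id_n)(x)}{}$; a $2\times 2$-dilation (Paulsen) argument then shows $W_0$ is well defined (as $T^\dag(a)=0$ forces $S^\dag(a)=0$) and completely contractive, and since $W_0$ is unital it is completely positive. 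Here the injectivity of $\M$ (equivalently of $\M^{op}$) enters decisively: by Arveson's extension theorem \cite{pisieros} the UCP map $W_0$ extends to a UCP map $W:\M^{op}\to\M^{op}$ with $W\circ T^\dag=S^\dag$, and its predual $\Phi=W_*:L_1(\M)\to L_1(\M)$ is the desired CPTP map with $T\circ\Phi=S$.

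The main obstacle is that an Arveson extension need not be \emph{normal}, whereas forming the predual $\Phi$ requires $W$ to be weak$^*$-continuous. I would resolve this exactly by the mechanism of Lemma~\ref{convex2} and Theorem~\ref{von}: decompose the extension $W=W_n+W_s$ into its normal and singular parts, use the normality of $S^\dag$ and $T^\dag$ to show the singular contribution does not obstruct the relation $W_n\circ T^\dag=S^\dag$, and then correct $W_n$ (a normal completely positive, subunital map) to a genuine normal UCP map by a rank-one adjustment as in the trace-preserving normalizations already used in the paper. A secondary technical point, handled as in Lemma~\ref{sup} by truncating with finite-trace projections $e\in\M$ and passing to the limit, is the reduction of the semifinite setting to finite corners $e\M e^{op}$ where the operator-system argument applies verbatim. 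Finally, I record that the separation route parallel to Theorem~\ref{pre} — separating $S$ from the weak$^*$-closed convex set $\{T\circ\Phi:\Phi\ \text{CPTP}\}$ in $CB(\N^{op},\M^{op})=(\N^{op}\hten L_1(\M))^*$ and reinterpreting both sides by Proposition~\ref{attain} (with $\N:=\M^{op}$, the identity being one admissible UCP map) — is available as well; that route stays inside the normal category and so sidesteps the normality obstacle, but it naturally produces the \emph{projective} ($H_{min}$-type, cf.~\eqref{ana}) norm $\norm{\cdot}{L_1(\M)\hten\M^{op}}$ rather than the spatial norm $\M^{op}\overline{\ten}\M$, again oriented with $T^\dag$ on the larger side.
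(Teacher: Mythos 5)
Your i)~$\Rightarrow$~ii) is exactly the paper's argument (take adjoints, use complete contractivity of $\Phi^\dag\ten\id$), and you are right that the inequality in ii) is misprinted: the paper's own easy direction establishes $\norm{S^\dag\ten\id(x)}{}\le\norm{T^\dag\ten\id(x)}{}$. The converse, however, has genuine gaps on both of your routes. For Route A (operator system plus Arveson): first, ii) compares norms only of $\M$-amplifications of \emph{positive} elements of $\N^{op}\ten\M$; it says nothing about $M_n$-amplifications, and your aside ``after tensoring with $M_n$ if $\M$ itself does not accommodate the matrix corners'' is not a move the hypothesis permits. When $M_n$ has no unital copy inside $\M$ (e.g.\ $\M$ abelian), no matrix-level inequality relating $S^\dag$ and $T^\dag$ follows from ii), so the complete contractivity of $W_0$ --- the input to Paulsen and Arveson --- is unproved. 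Second, and fatally, the normality repair fails. Writing the Arveson extension as $W=W_n+W_s$ (normal plus singular parts), the identity $W\circ T^\dag=S^\dag$ does \emph{not} yield $W_n\circ T^\dag=S^\dag$: the error $W_s\circ T^\dag=S^\dag-W_n\circ T^\dag$ is itself a \emph{normal} CP map (a difference of normal maps), and composition with the normal map $T^\dag$ destroys singularity, so there is no cancellation to invoke. Concretely, if $T^\dag(a)=\tau_\N(\omega a)1_\M$ (the adjoint of the CPTP map $T(\rho)=\tau_\M(\rho)\,\omega$), then $\psi\circ T^\dag$ is normal for \emph{every} state $\psi$ on $\M^{op}$, singular or not. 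The mechanism of Lemma \ref{convex2} that you cite lives on the predual side: there one decomposes the \emph{values} $\Phi(T(x))\in(\M^{op})^*$ into normal and singular functionals, and equality with the normal functional $S(x)$ kills the singular part pointwise. Nothing of this sort applies to the composition $W_s\circ T^\dag$ on the dual side, and a rank-one adjustment of $W_n$ cannot reproduce a nonzero normal CP summand that need not factor through $T^\dag$ at all.

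Route B, which you mention only as an aside, is in fact the paper's proof --- but as you describe it, it stops one step short of the stated theorem. After separating $S$ from the weak$^*$-closed set $C_{pre}(T)$ (Lemma \ref{convex2}) and reinterpreting the pairings through Proposition \ref{attain}, one arrives at a strict \emph{projective}-norm inequality
\[\norm{\id\ten S^\dag(x_3)}{L_1(\M)\hten \M^{op}}>\norm{\id\ten T^\dag(x_3)}{L_1(\M)\hten \M^{op}}\]
for a finite tensor $x_3\in e\M e\ten \N^{op}$ of positive elements. The bridge to the spatial norm of ii) --- which you flag as a mismatch and leave unresolved --- is Lemma \ref{extend}: for positive elements the projective norm equals $\inf_{\si}\norm{(\si^{-1/2}\ten 1)\,z\,(\si^{-1/2}\ten 1)}{\infty}$, the infimum running over invertible density operators $\si\in e\M e$. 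Choosing $\si$ nearly optimal for $\id\ten T^\dag(x_3)$ and absorbing it into the test element, $x_4=(\si^{-1/2}\ten 1)x_3(\si^{-1/2}\ten 1)$ is again a positive element of $\M\ten\N^{op}$ and satisfies
\[\norm{\id\ten T^\dag(x_4)}{\M\overline{\ten}\M^{op}}<\norm{\id\ten S^\dag(x_4)}{\M\overline{\ten}\M^{op}}\pl,\]
which is precisely the negation of ii). With this conjugation step added, your Route B becomes a complete proof that stays in the normal category (and it explains why ii) is phrased for positive test elements in the spatial norm); without it, neither of your routes closes the argument.
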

\begin{proof}By taking the adjoint, $\Phi^{\dag}\circ T^{\dag}=S^{\dag}$ as normal UCP maps. Then i)$\Rightarrow$ ii) follows from
\[\norm{S^{\dag}\ten \operatorname{id} (x)}{\infty}=\norm{\Phi^{\dag}\circ T^{\dag}\ten \operatorname{id}(x)}{\infty}\le \norm{T^{\dag}\ten \operatorname{id}(x)}{\infty}\pl.\]
For ii) $\Rightarrow$ i), suppose $S\notin C_{pre}(T):=\{T\circ \Phi\,| \pl \Phi \pl \text{CPTP} \}$. By the same argument as for Theorem \ref{pre}, there exists a finite tensor
$x_2=\sum_{j}a_j\ten b_j\in L_1(\M)\hten \N^{op}$ with $a_j,b_j$ positive such that
\[\lan  S,x_2\ran> \sup_{\Phi \pl CPTP} \lan T\circ \Phi, x_2\ran\pl.\]
Then we choose a finite trace projection $e\in \M$ such that $ea_je\in \M$ are bounded and for
$x_3=(e\ten 1)x_2(e\ten 1)=\sum_{j}ea_je\ten b_j$,
\begin{align} \lan  S,x_3\ran> \lan  S,x_2\ran-\epsilon> \sup_{\Phi \pl CPTP} \lan T\circ \Phi, x_2\ran \label{sep2}\end{align}
Reinterpret the pairings
\begin{align*}
&\lan  S,x_3\ran=\lan \operatorname{id}, \id\ten S^\dag (x_3)\ran \le \sup_{\Phi \pl CPTP} \lan \Phi, \id\ten S^\dag (x_3)\ran = \norm{\id\ten S^\dag (x_3)}{L_1(\M)\hten \M^{op}}\pl\textnormal{ and}
\\ & \lan  T\circ \Phi,x_2\ran= \sup_{\Phi \pl CPTP} \lan \Phi, \id\ten T^\dag (x_2)\ran = \norm{\id\ten T^\dag (x_2)}{L_1(\M)\hten \M^{op}}\ge \norm{\id\ten T^\dag (x_3)}{L_1(\M)\hten \M^{op}}
\end{align*}
This implies
\begin{align*}
\norm{\id\ten S^\dag(x_3)}{L_1(\M)\hten \M^{op} }>\norm{\id\ten T^\dag(x_2)}{L_1(\M)\hten \M^{op} }\ge\norm{\id\ten T^\dag(x_3)}{L_1(\M)\hten \M^{op} }.
\end{align*}
Because $\operatorname{id}\ten T^\dag (x_3)$ is a positive operator in $e\M e\ten \M^{op}$, we have
\begin{align*}
\norm{\operatorname{id}\ten T^\dag (x_3)}{L_1(e\M e)\hten \M^{op} }=\inf_{\si} \norm{( \si^{-\frac{1}{2}}\ten 1)\operatorname{id}\ten T^\dag (x_3)( \si^{-\frac{1}{2}}\ten 1)}{\infty}\pl
\end{align*}
where the infimum is over all invertible density operators $\si\in e\M e$. It suffices to consider invertible $\si$ with $\norm{\si^{-1}}{e\M e}<\infty$ because we can always replace $\si$ by an invertible density operator $\tilde{\si}=(\si+\delta e)$. Thus we choose an invertible density operator $\si\in e\M e$ such that
\begin{align*}
\norm{( \si^{-\frac{1}{2}}\ten 1)\operatorname{id} \ten T^\dag(x_3) ( \si^{-\frac{1}{2}}\ten 1)}{e\M e \overline{\ten} \M} <&\norm{\operatorname{id}\ten T^\dag (x_3)}{L_1(e\M e)\hten \M^{op} }+\epsilon \\ <&\norm{\operatorname{id}\ten S^\dag (x_3)}{L_1(e\M e)\hten \M^{op} }\\ \le & \norm{( \si^{-\frac{1}{2}}\ten 1)\operatorname{id} \ten S^\dag(x_3) ( \si^{-\frac{1}{2}}\ten 1)}{e\M e \overline{\ten} \M}.
\end{align*}
Then $x_4=( \si^{-\frac{1}{2}}\ten 1)x_3 ( \si^{-\frac{1}{2}}\ten 1)$ is positive in $\M \ten \N^{op}$, and we have
\begin{align*}
\norm{\operatorname{id}\ten T^\dag (x_4) }{\M \overline{\ten} \M^{op}} < \norm{\operatorname{id}\ten S^\dag (x_4)}{\M \overline{\ten} \M^{op}}\pl.
\end{align*}
which is a violation to condition ii). This proves ii)$\Rightarrow $ i).
\end{proof}

%\begin{rem}{\rm a) The condition ii) is more natural here because the norm $\norm{\id\ten T(\rho)}{L_1(\M)\hten\M^{op}}$ is always finite for $T\in CB(L_1(\N),\M^{op})$. In condition iii), $\id\ten T(\rho)$ may not belongs to $L_1(\M)\hten\M^{op}$ for a CPTP $T$. The inequality still make sense that \[\norm{\id\ten T(\rho)}{L_1(\M)\hten\M^{op}}= \inf\{\la | \id\ten T(\rho) \le \la \si \ten 1\pl, \pl  \text{for some density}\pl \si\in L_1(\M) \}=+\infty\pl.\]
%Namely, the bipartite density $\id\ten T(\rho)$ can have the $H_{min}$ entropy equal to $-\infty$.

%b) Another difference with the $B(H)$ case is that we do not obtain $\Psi$ as finite rank entanglement breaking maps. Such restriction on $\Psi$ can be obtained, if we replace
%the condition i) by\begin{enumerate}
 %\item[] $\Phi_\al\ten \operatorname{id}(\rho)\to \si$ in the topology induced by $\M\ten_{min}\N\subset \M\overline{\ten} \N$.
 %\end{enumerate}
%This means that for any $x\in \M, y\in \N$
%$\tau((x\ten y) \Phi_\al\ten \operatorname{id}(\rho))\to \tau((x\ten y) \si) $, which is the convergence under local measurement. Under this condition, we can choose the separating element $x_1$ in the algebraic tensor $ \M\ten \N$, which can eventually modified to a finite rank entanglement breaking map $\Psi(\rho)=\sum_j\tau(a_j\rho)b_j$.}
%\end{rem}
\subsection{Approximate case} \label{approximate case}
In \cite{jencovachannel}, Jen\v{c}ov\'a gives a characterization for the approximate post-channel factorization in finite dimensions that
\[\inf_{\Phi \textnormal{ CPTP}}\norm{S-\Phi\circ T}{cb}<\delta\]
is small but nonzero. Inspired by Jen\v{c}ov\'a's work, we consider the approximate case of quantum majorization. The following lemma is an analogue of \cite[Proposition 1]{jencovachannel}.
\begin{lemma}\label{12}
i) Let $\rho,\si$ be two density operators in $L_1(\M)$. Then
\[\frac{1}{2}\norm{\rho-\si}{1}=\sup \{\tau(x(\rho-\si))\pl|\pl x\ge 0, \norm{x}{\infty}\le 1 \}\pl.\]
ii) Let $\M$ be injective. Let $T,S:L_1(\M)\to L_1(\M)$ be two CPTP maps. Then
\[\frac{1}{2}\norm{T-S}{cb}=\sup \{\lan T-S,\rho\ran \pl|\pl \rho\ge 0, \norm{\rho}{L_1(\M,L_\infty(\M^{op}))}\le 1 \}\pl.\]
\end{lemma}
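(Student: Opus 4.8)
The plan is to prove i) as a trace-distance computation and then lift it to ii) by a duality-plus-gauge argument; in both parts the constant $\tfrac12$ has the same source, namely that $\rho-\si$ and $T-S$ annihilate the trace. For i), I would use that $\rho-\si$ is self-adjoint with $\tau_\M(\rho-\si)=0$. Writing $\rho-\si=(\rho-\si)_+-(\rho-\si)_-$ gives $\tau_\M((\rho-\si)_+)=\tau_\M((\rho-\si)_-)=\tfrac12\norm{\rho-\si}{1}$. For $0\le x\le 1$ in $\M$ one has $\tau(x(\rho-\si))\le\tau(x(\rho-\si)_+)\le\tau((\rho-\si)_+)=\tfrac12\norm{\rho-\si}{1}$, with equality when $x$ is the support projection of $(\rho-\si)_+$. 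Equivalently the substitution $x\mapsto x-\tfrac12 1$ identifies $\{0\le x\le1\}$ with the self-adjoint part of the ball of radius $\tfrac12$ and leaves $\tau(x(\rho-\si))$ unchanged since $\tau(\rho-\si)=0$; this shift is the template I reuse for ii).

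For ii), I would first record the duality $(L_1(\M)\hten\M^{op})^*=CB(\M^{op},\M^{op})$, under which $\norm{T-S}{cb}$ is the supremum of $|\lan T-S,\rho\ran|$ over the unit ball of $L_1(\M)\hten\M^{op}$; since $T-S$ is Hermitian-preserving and the involution is isometric on the projective tensor product, this supremum is attained on self-adjoint $\rho$, so $\tfrac12\norm{T-S}{cb}=\sup\{\lan T-S,y\ran: y=y^*,\ \norm y{}\le\tfrac12\}$. Injectivity of $\M$ and Theorem \ref{injective} let me identify $\norm\cdot{L_1(\M)\hten\M^{op}}$ with $\norm\cdot{L_1(\M,L_\infty(\M^{op}))}$ throughout. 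The gauge identity is that $T,S$ are trace preserving, so $\lan T-S,\omega\ten1\ran=\tau_\M((T-S)(\omega))=0$ for every $\omega\in L_1(\M)$; adding multiples of $\omega\ten1$ to $\rho$ never changes the pairing.

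The two bounds then proceed as follows. To show the supremum is at least $\tfrac12\norm{T-S}{cb}$, I take a self-adjoint $y$ with $\norm y{L_1(\M,L_\infty(\M^{op}))}\le\tfrac12$ nearly realizing $\tfrac12\norm{T-S}{cb}$; Lemma \ref{positive}(i) factors $y=(a\ten1)y'(a^*\ten1)$ with $\norm a{2}=1$ and $\norm{y'}{\infty}\le\tfrac12+\eps$, and then $\rho=y+(\tfrac12+\eps)(aa^*\ten1)=(a\ten1)(y'+(\tfrac12+\eps)1)(a^*\ten1)\ge0$ satisfies $\norm\rho{L_1(\M,L_\infty(\M^{op}))}=\la(\rho)\le1+2\eps$ by Lemma \ref{extend}, while the gauge identity gives $\lan T-S,\rho\ran=\lan T-S,y\ran$; rescaling and letting $\eps\to0$ finishes. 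Conversely, for any $\rho\ge0$ with $\norm\rho{}\le1$ Lemma \ref{extend} gives $\rho\le\si\ten1$ for a density operator $\si$, so $y=\rho-\tfrac12\si\ten1$ obeys $-\tfrac12\si\ten1\le y\le\tfrac12\si\ten1$, whence $\norm y{L_1(\M,L_\infty(\M^{op}))}\le\tfrac12$ by conjugating out $\si^{1/2}$ in Lemma \ref{positive}(i), and $\lan T-S,\rho\ran=\lan T-S,y\ran\le\tfrac12\norm{T-S}{cb}$.

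The step I expect to be the main obstacle is the two-sided passage between order bounds and the vector-valued norm used in both inequalities: that an estimate $-c\,\si\ten1\le y\le c\,\si\ten1$ forces $\norm y{L_1(\M,L_\infty(\M^{op}))}\le c$, and conversely that an operator-norm bound on the middle factor $y'$ yields a genuinely positive $\rho$ with $\la(\rho)$ controlled. Both hinge on conjugating by $\si^{\pm1/2}$, regularized as $\si+\delta e$ on a finite-trace corner exactly as in the proofs of Lemmas \ref{positive} and \ref{extend}, together with the identification from Theorem \ref{injective}; keeping the density-operator normalization and the gauge shift mutually compatible as $\eps,\delta\to0$ is the delicate bookkeeping.
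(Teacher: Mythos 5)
Your proposal is correct and, for the substantive part ii), follows essentially the same route as the paper: identify $\|T-S\|_{cb}$ with the supremum of the pairing over the unit ball of $L_1(\M)\hten\M^{op}\cong L_1(\M,L_\infty(\M^{op}))$ (via injectivity and Theorem \ref{injective}), reduce to self-adjoint elements using that $T-S$ is Hermitian-preserving, and exploit trace preservation to pass between the self-adjoint ball and the positive cone by shifting with $\si\ten 1$, using the two-sided order bound $-\si\ten 1\le y\le \si\ten 1$ as the characterization of the norm. For part i) you use the Jordan decomposition of $\rho-\si$ where the paper simply substitutes $x\mapsto x+1$ and rescales, but these are interchangeable elementary arguments, and your identification of the regularized $\si^{\pm 1/2}$-conjugation as the one delicate step is exactly what the paper's proof implicitly relies on from Lemmas \ref{positive} and \ref{extend}.
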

\begin{proof}
For i), note that
\[x=x^*,\norm{x}{}\le 1\Longleftrightarrow x+1\ge 0, \norm{x+1}{}\le 2.\]
Since $\tau(\rho-\si)=0$,
\begin{align*}
\norm{\rho-\si}{1}=&\sup\{ \Re\pl \tau(x(\rho-\si))| \norm{x}{\infty}\le 1 \}
\\ =&\sup\{  \tau(x(\rho-\si))| \norm{x}{\infty}\le 1, x\pl  \text{self-adjoint} \}
\\ =&\sup\{  \tau((x+1)(\rho-\si))| \norm{x}{\infty}\le 1, x\pl  \text{self-adjoint} \}
\\ =&\sup\{  \tau(y(\rho-\si))| \norm{y}{\infty}\le 2, y\ge 0 \}
\\=& 2 \sup\{  \tau(y(\rho-\si))| \norm{y}{\infty}\le 1, y\ge 0 \}.
\end{align*}
For ii), let $x$ be self-adjoint and satisfy $\norm{x}{L_1(\M,L_\infty(\M^{op}))}< 1$. There exists a density operator $\si\in L_1(\M)$ such that $-\si \ten 1\le x\le \si \ten 1$. Then
\[x+\si\ten 1\ge 0, \norm{x+\si\ten 1}{L_1(\M,L_\infty(\M^{op}))}\le 2 \pl. \]
Conversely, let $y\ge 0, \norm{y}{L_1(\M,L_\infty(\M^{op}))}< 2$. There there exists a density operator $\si\in L_1(\M)$ such that
$0\le y\le 2\si \ten 1$. Then
\[ -\si\ten 1\le y-\si\ten 1\le \si\ten 1\pl, \norm{y-\si\ten 1}{L_1(\M,L_\infty(\M^{op}))}\le 1.\]
Since $\M$ is injective, we have $L_1(\M,L_\infty(\M^{op}))\cong L_1(\M)\hten \M^{op}$. Then using the fact that $\lan T-S, \si \ten 1\ran =\tau(T(\si))-\tau(S(\si))=0$, we have
\begin{align*}
\norm{T-S}{cb}=&\sup\{ \Re\pl \lan T-S, x \ran | \norm{x}{L_1(\M,L_\infty(\M^{op}))}< 1  \}
\\ =&\sup\{  \lan T-S, x \ran | \norm{x}{L_1(\M,L_\infty(\M^{op}))}< 1, x=x^*  \}
\\ =&\sup\{  \lan T-S, x \ran | \norm{x}{L_1(\M,L_\infty(\M^{op}))}< 2, x\ge 0  \}
\\ =&2\sup\{  \lan T-S, x\ran | \norm{x}{L_1(\M,L_\infty(\M^{op}))}< 1, x\ge 0  \}.\qedhere
\end{align*}
\end{proof}
\begin{theorem}\label{apro1}Let $\M,\N$ be semi-finite von Neumanna algebras and $\M$ be injective and $\tau_\M(1)=+\infty$. Suppose $\rho$ and $\si$ are two density operators in $L_1(\M\overline{\ten} \N)$ such that $\tau_\M\ten \id(\rho)=\tau_\M\ten \id(\si)$. TFAE
\begin{enumerate}
\item[i)] $\displaystyle \inf_{\Phi\pl  \textnormal{CPTP}} \norm{\si-\Phi\ten \id(\rho)}{1}\le \delta $.
\item[ii)] for any CPTP map $\Psi:L_1(\N)\to L_1(\M^{op})\cap \M ^{op}$, we have
\[\norm{\id\ten \Psi (\si)}{L_1(\M)\hten \M ^{op}}\le \norm{\id\ten \Psi (\rho)}{L_1(\M)\hten \M ^{op}}+\frac{\delta}{2}\norm{\Psi:L_1(\N)\to \M^{op}}{cb}\]
\end{enumerate}
\end{theorem}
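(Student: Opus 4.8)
The plan is to prove the two implications separately: (i)$\Rightarrow$(ii) by a direct factorization argument, and (ii)$\Rightarrow$(i) by contraposition through Hahn--Banach separation. In both directions the factor $\delta/2$ is produced by a ``centering'' trick that uses the hypothesis $\tau_\M\ten\operatorname{id}(\rho)=\tau_\M\ten\operatorname{id}(\si)$, that is, that the densities have equal reduced operators on $\N$ and that the relevant differences are traceless.

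For (i)$\Rightarrow$(ii), fix a CPTP map $\Phi$ with $\norm{\si-\Phi\ten\operatorname{id}(\rho)}{1}$ within $\epsilon$ of the infimum, and set $\eta=\si-\Phi\ten\operatorname{id}(\rho)$, a traceless self-adjoint element of $L_1(\M\overline{\ten}\N)$ with $\norm{\eta}{1}\le\delta+\epsilon$. Given a CPTP map $\Psi$, since $\operatorname{id}\ten\Psi(\si)\ge 0$ Proposition \ref{attain} supplies a near-optimal CPTP $\Theta:L_1(\M)\to L_1(\M)$ with $\norm{\operatorname{id}\ten\Psi(\si)}{L_1(\M)\hten\M^{op}}\le \langle\Theta,\operatorname{id}\ten\Psi(\si)\rangle+\epsilon$. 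I split $\langle\Theta,\operatorname{id}\ten\Psi(\si)\rangle=\langle\Theta,\operatorname{id}\ten\Psi(\Phi\ten\operatorname{id}(\rho))\rangle+\langle\Theta,\operatorname{id}\ten\Psi(\eta)\rangle$. The first term equals $\langle\Theta\circ\Phi,\operatorname{id}\ten\Psi(\rho)\rangle\le\norm{\operatorname{id}\ten\Psi(\rho)}{L_1(\M)\hten\M^{op}}$, because $\Theta\circ\Phi$ is CPTP and $\operatorname{id}\ten\Psi(\rho)\ge 0$ (Proposition \ref{attain}). For the second term, moving $\Theta$ back onto the element and writing $\Psi=T_y$ with $y\ge 0$ and $\norm{y}{\infty}=\norm{\Psi:L_1(\N)\to\M^{op}}{cb}$ (via \eqref{er} and Lemma \ref{map}), I obtain $\langle\Theta,\operatorname{id}\ten\Psi(\eta)\rangle=\tau\big(y\,(\Theta\ten\operatorname{id})(\eta)\big)$. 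The element $\eta':=(\Theta\ten\operatorname{id})(\eta)$ is again traceless with $\norm{\eta'}{1}\le\norm{\eta}{1}$; the decisive point is that $\tau(y\eta')=\tau\big((y-\tfrac12\norm{y}{\infty}1)\eta'\big)$, and since $0\le y\le\norm{y}{\infty}1$ the centered operator has norm $\le\tfrac12\norm{y}{\infty}$, so $|\tau(y\eta')|\le\tfrac12\norm{y}{\infty}\norm{\eta'}{1}\le\tfrac{\delta}{2}\norm{\Psi}{cb}$ after letting $\epsilon\to 0$. Combining the two estimates gives (ii).

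For (ii)$\Rightarrow$(i) I argue by contraposition. If (i) fails, then $\si$ lies at $L_1$-distance $>\delta$ from $C(\rho)=\{\Phi\ten\operatorname{id}(\rho):\Phi\ \text{CPTP}\}$, which is convex and closed by Lemma \ref{convex}. The Hahn--Banach distance formula yields a self-adjoint $y_0\in\M\overline{\ten}\N$ with $\norm{y_0}{\infty}\le 1$ and $\tau(y_0\si)-\sup_\Phi\tau\big(y_0\,\Phi\ten\operatorname{id}(\rho)\big)>\delta$. Replacing $y_0$ by the positive contraction $y=(y_0+1)/2$, and using that $\si$ and every $\Phi\ten\operatorname{id}(\rho)$ have unit trace so that the added constant cancels in the difference and produces the factor $1/2$, I get $\tau(y\si)-\sup_\Phi\tau\big(y\,\Phi\ten\operatorname{id}(\rho)\big)>\tfrac{\delta}{2}$. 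Setting $\Psi_0=T_y$, a CP map with $\norm{\Psi_0}{cb}\le 1$, Proposition \ref{attain} identifies $\tau(y\si)=\langle\operatorname{id}_\M,\operatorname{id}\ten\Psi_0(\si)\rangle$ and $\sup_\Phi\tau(y\,\Phi\ten\operatorname{id}(\rho))=\norm{\operatorname{id}\ten\Psi_0(\rho)}{L_1(\M)\hten\M^{op}}$, whence $\norm{\operatorname{id}\ten\Psi_0(\si)}{L_1(\M)\hten\M^{op}}\ge\langle\operatorname{id}_\M,\operatorname{id}\ten\Psi_0(\si)\rangle>\norm{\operatorname{id}\ten\Psi_0(\rho)}{L_1(\M)\hten\M^{op}}+\tfrac{\delta}{2}$, a strict violation of the inequality in (ii) for the CP map $\Psi_0$.

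The remaining task, which I expect to be the main obstacle, is to upgrade the completely positive $\Psi_0$ to a genuine CPTP map $\Psi$ without losing this strict inequality. I would follow the modification scheme of the proof of Theorem \ref{von}: compress by a finite-trace projection $e$ so that $e\Psi_0(\cdot)e$ maps into $L_1(e\M e^{op})\cap e\M e^{op}$, normalize to a CPTNI map, and then add a trace-restoring correction supported on $e$. Here the hypotheses are essential: $\tau_\M(1)=+\infty$ provides room to place the correction, while $\tau_\M\ten\operatorname{id}(\rho)=\tau_\M\ten\operatorname{id}(\si)$ ensures the correction term is identical for $\rho$ and $\si$ and therefore cancels in the difference of norms, exactly as in the $\rho_\N=\si_\N$ case of Theorem \ref{von}, so the strict gap survives while $\norm{\Psi:L_1(\N)\to\M^{op}}{cb}$ is kept $\le 1$ and the surviving gap still exceeds $\tfrac{\delta}{2}\norm{\Psi}{cb}$. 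Simultaneously tracking the $L_1\to L_1$ and $L_1\to\M^{op}$ completely bounded norms through this normalization is the delicate bookkeeping that the argument must control.
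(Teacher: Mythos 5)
Your proposal is correct and follows essentially the same route as the paper: the centering trick you use in (i)$\Rightarrow$(ii) (pairing the traceless difference against $y-\tfrac12\|y\|_\infty 1$) is exactly the content of the paper's Lemma \ref{12}, and your Hahn--Banach distance formula plus the shift $y=(y_0+1)/2$ in (ii)$\Rightarrow$(i) is the same duality the paper packages as a minimax swap combined with that lemma. The final upgrade of the separating CP map to a CPTP map, which you only sketch, is handled in the paper exactly as you indicate---compression by a finite-trace projection $e$, rescaling by homogeneity, and a trace-restoring correction of cb-norm $1/\tau_\M(e)$ made negligible via $\tau_\M(1)=+\infty$ and $\rho_\N=\si_\N$.
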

\begin{proof}For a CPTP $\Psi$, we can choose $R:L_1(\M)\to L_1(\M)$ CPTNI such that
such that
\[ \lan R, \id\ten \Psi (\si)\ran\ge \norm{\id\ten \Psi (\si)}{L_1(\M)\hten \M^{op}}-\epsilon\pl.\]
Then
\begin{align*}
&\norm{\id\ten \Psi (\si)}{L_1(\M)\hten \M^{op}} \le \epsilon+\lan R, \id\ten \Psi (\si)\ran\\
\le &\epsilon+\lan R, \Phi\ten \Psi (\rho)\ran+ \lan R, \id\ten \Psi (\si)-\Phi\ten \Psi (\rho)\ran
\\
\le &\epsilon+\lan R\circ \Phi, \id\ten \Psi(\rho)\ran+ \lan \Psi^\dag\circ R,  \si-\Phi\ten \id (\rho)\ran
\\
\le &\epsilon+\norm{\id\ten \id(\rho)}{L_1(\M)\hten \M^{op}}+\frac{1}{2}\norm{\Psi^\dag\circ R}{cb}\norm{\si-\Phi\ten \id (\rho)}{1}
\\
\le &\epsilon+\norm{\id\ten \id(\rho)}{L_1(\M)\hten \M^{op}}+\frac{1}{2}\norm{\Psi^\dag}{cb}\norm{\si-\Phi\ten \id (\rho)}{1}
\end{align*}
where in the second last inequality we used Lemma \ref{12} i).
Then i)$\Rightarrow$ ii) follows from taking the infimum over all CPTP $\Phi$ and $\epsilon \to 0$.
Conversely, suppose $\displaystyle\inf_{\Phi \pl  \textnormal{CPTP}} \norm{\si-\Phi\ten \id(\rho)}{1}>\delta$.  For $x\in \N\overline{\ten} \M$,
\[\tau\big(x(\si-\Phi\ten \id(\rho))\big)=\lan T, \si-\id\ten \Phi(\rho)\ran\pl, \]
where $T$ is the map corresponding to $x^{op}$ via the Effros-Ruan isomorphism \[CB(L_1(\N),\M^{op})\cong \N^{op}\overline{\ten} \M^{op}\pl.\] Because this pairing is linear for both $T$ and $\Phi$, we have by min-max theorem,
\begin{align*}
\delta<&\inf_{\Phi  \textnormal{ CPTP}} \norm{\si-\Phi\ten \id(\rho)}{1}
\\ =& 2\inf_{\Phi  \textnormal{ CPTP}} \sup_{T \pl CP, \norm{T}{cb}\le 1}\lan T, \si-\Phi\ten \id(\rho)\ran
\\ =&  2\sup_{T \pl CP, \norm{T}{cb}\le 1} \inf_{\Phi  \textnormal{ CPTP}}\lan T, \si-\Phi\ten \id(\rho)\ran
\\ =&  2\sup_{T \pl CP, \norm{T}{cb}\le 1} \lan T, \si\ran- \sup_{\Phi  \textnormal{ CPTP}}
\lan T,\Phi\ten \id(\rho)\ran
\end{align*}
Rescaling the above inequality, there exist CP and CB $T:L_1(\N)\to \M^{op}$ such that
\[\lan T, \si\ran- \sup_{\Phi  \textnormal{ CPTP}}
\lan T,  \Phi\ten \id(\rho)\ran>\frac{\delta}{2}\norm{T:L_1(\N)\to \M^{op}}{cb}\pl.\]
For a projection $e\in \M$, denote the map $T_e(\cdot)=eT(\cdot)e$.
There exists $e$ with $\tau_\N(e)<\infty$ such that $|\lan T, (e\ten 1)\si(e\ten 1)-\si\ran|$ is small and
\begin{align*}
\lan T_e, \si\ran&=\lan T, (e\ten 1)\si(e\ten 1)\ran\\ &>\sup_{\Phi  \textnormal{ CPTP}}
\lan T, \Phi\ten \id(\rho)\ran+\frac{\delta}{2}\norm{T:L_1(\N)\to \M^{op}}{cb}
\\ &=
\norm{\id\ten T(\rho)}{L_1(\M)\hten \M ^{op}}+\frac{\delta}{2}\norm{T:L_1(\N)\to \M^{op}}{cb}
\\ &\ge
\norm{\id\ten T_e(\rho)}{L_1(\M)\hten e\M e ^{op}}+\frac{\delta}{2}\norm{T_e:L_1(\N)\to e\M e^{op}}{cb}.
\end{align*}
Here we use the fact that
\[\sup_{\Phi  \textnormal{ CPTP}}
\lan T, \Phi\ten \id(\rho)\ran=\sup_{\Phi  \textnormal{ CPTP}}
\lan \Phi, \id\ten T(\rho)\ran=\norm{\id\ten T(\rho)}{L_1(\M)\hten \M ^{op}}\pl \]
and the projection from $\M$ to $e\M e$ is a complete contraction.
Also, we have \[{\lan T_e, \si\ran= \lan \id, \id\ten T_e(\si)\ran\le \norm{\id\ten T_e(\si)}{L_1(\M)\hten e\M e ^{op}}}\pl.\] Therefore, we have a violation of ii) for $T_e: L_1(\N)\to e\M e^{op}$ is CP and CB,
\begin{align}\norm{\id\ten T_e(\si)}{L_1(\M)\hten e\M e ^{op}}>\norm{\id\ten T_e(\rho)}{L_1(\M)\hten e\M e ^{op}}+\frac{\delta}{2}\norm{T_e:L_1(\N)\to e\M e^{op}}{cb}\label{6}\end{align}
By linearity, we can assume $T_e$ is CPTNI. Denote $\rho_\N=\tau_\M\ten \id(\rho)$ and $\si_\N=\tau_\M\ten \id(\si)$. Because $\rho_\N=\si_\N$, we follow the argument in Theorem \ref{von} to replace $T_e$ by
\[\tilde{T}=T_e+T_0\pl, \pl T_0(x)=\frac{\tau_\M(x-T_e(x))}{\tau_\M(e)}e\pl.\]
Note that $\norm{T_0:L_1(\N)\to e\M e^{op}}{cb}=\frac{1}{\tau_\M(e)}$. Then we can always choose $\tau_\M(e)$ large enough such that $\norm{\tilde{T}}{cb}- \norm{T_e}{cb}$ is small and \eqref{6} is satisfied for $\tilde{T}$.
\end{proof}
\begin{rem}{\rm If, in addition, $\inf \{\tau_\M(e_0) |\pl  e_0 \pl \text{nonzero projection}\}=0$, we do not need the assumption
$\rho_\N=\si_\N$ in Theorem~\ref{apro1}. In the case of $\rho_\N\neq\si_\N$, by the corresponding discussion in Theorem \ref{von}, we have a CPTP map $T_1$ such that
\begin{align*}\norm{\id\ten T_1(\si)}{L_1(\M)\hten e\M e ^{op}}-\norm{\id\ten T_1(\rho)}{L_1(\M)\hten e\M e^{op}}>(\frac{1}{\tau_\M(e_0)}-\frac{1}{\tau_\M(e)})\tau_\N\big((\rho_\N-\si_\N)_-\big)\end{align*}
where $e_0\le e$ is a sub-projection. This difference can be arbitrarily large if $\displaystyle \inf_{e_0\neq 0} \tau_\M(e_0)=0$.}
\end{rem}

This following is a generalization of \cite[Theorem 1]{jencovachannel}.
\begin{theorem}\label{apro2}Let $\M,\N$ be semi-finite von Neumanna algebras and $\M$ be injective. Let $S,T:L_1(\N)\to L_1(\M)$ be two CPTP maps. TFAE
\begin{enumerate}
\item[i)] $\displaystyle \inf_{\Phi\pl   \textnormal{CPTP}} \norm{S- \Phi\circ T}{cb}\le \delta$
\item[ii)] for any density operator $\rho\in L_1(\N\overline{\ten}\M ^{op})$, we have
\[\norm{S\ten id (\rho)}{L_1(\M)\hten \M^{op}}\le \norm{ T\ten \id(\rho)}{L_1(\M)\hten \M^{op}}+\frac{\delta}{2}\norm{\rho}{L_1(\M)\hten \M^{op}}\pl.\]
\end{enumerate}
\end{theorem}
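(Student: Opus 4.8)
The plan is to read Theorem~\ref{apro2} as the quantitative refinement of the post-factorization result Theorem~\ref{pre}, and to prove it with the same duality package (Proposition~\ref{attain}) together with a minimax argument in the spirit of Theorem~\ref{apro1}. The engine will be a variant of Lemma~\ref{12}~ii) for maps between two different algebras: for any two CPTP maps $U,V\colon L_1(\N)\to L_1(\M)$,
\begin{align*}
\tfrac{1}{2}\norm{U-V}{cb}=\sup\{\lan U-V,\eta\ran\pl|\pl \eta\ge 0,\ \norm{\eta}{L_1(\N)\hten\M^{op}}\le 1\}.
\end{align*}
I would establish this exactly as Lemma~\ref{12}~ii), but carrying the \emph{projective} norm $\norm{\cdot}{L_1(\N)\hten\M^{op}}$ throughout, using the duality $(L_1(\N)\hten\M^{op})^*=CB(L_1(\N),(\M^{op})^*)$ in place of the $L_1(\M,L_\infty)$ description. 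Concretely: self-adjointness preservation of $U-V$ restricts the supremum to self-adjoint $\eta$; the general bound $\la(\pm\eta)\le\norm{\eta}{L_1(\N)\hten\M^{op}}$ (Lemma~\ref{extend}~i) combined with $\norm{\cdot}{L_1(\N,L_\infty(\M^{op}))}\le\norm{\cdot}{L_1(\N)\hten\M^{op}}$) yields a density $\si\in L_1(\N)$ with $-\si\ten 1\le\eta\le\si\ten 1$; and the trace identity $\lan U-V,\si\ten 1\ran=\tau_\M(U(\si))-\tau_\M(V(\si))=0$ lets one replace $\eta$ by the positive element $\eta+\si\ten 1$, producing the factor $\tfrac12$. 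This step needs neither algebra injective; injectivity of $\M$ enters the theorem only through Proposition~\ref{attain}.

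For i)$\Rightarrow$ii), I would fix a density operator $\rho$ and $\epsilon>0$, choose a CPTP $\Phi$ with $\norm{S-\Phi\circ T}{cb}\le\delta+\epsilon$, and invoke Proposition~\ref{attain} to pick a CPTNI $R\colon L_1(\M)\to L_1(\M)$ with $\lan R,(S\ten\id)(\rho)\ran\ge\norm{(S\ten\id)(\rho)}{L_1(\M)\hten\M^{op}}-\epsilon$. Writing $S=\Phi\circ T+(S-\Phi\circ T)$ splits $\lan R,(S\ten\id)(\rho)\ran$ into two pieces. The first equals $\lan R\circ\Phi,(T\ten\id)(\rho)\ran$, and since $R\circ\Phi$ is again CPTNI, Proposition~\ref{attain} bounds it by $\norm{(T\ten\id)(\rho)}{L_1(\M)\hten\M^{op}}$. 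For the second I would move the adjoint of $R$ onto the reference leg, $\lan R,((S-\Phi\circ T)\ten\id)(\rho)\ran=\lan S-\Phi\circ T,(\id\ten R^\dag)(\rho)\ran$; here $(\id\ten R^\dag)(\rho)$ is positive ($R^\dag$ is CP) with $\norm{(\id\ten R^\dag)(\rho)}{L_1(\N)\hten\M^{op}}\le\norm{\rho}{L_1(\N)\hten\M^{op}}$ (because $\norm{R^\dag}{cb}\le 1$), so the lemma above bounds it by $\tfrac12\norm{S-\Phi\circ T}{cb}\norm{\rho}{L_1(\N)\hten\M^{op}}\le\tfrac{\delta+\epsilon}{2}\norm{\rho}{L_1(\N)\hten\M^{op}}$. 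Letting $\epsilon\to 0$ gives ii).

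For ii)$\Rightarrow$i) I would argue by contraposition. Assuming $\inf_{\Phi}\norm{S-\Phi\circ T}{cb}>\delta$, the lemma rewrites the left side as $2\inf_{\Phi}\sup_{\eta}\lan S-\Phi\circ T,\eta\ran$, with the supremum over positive $\eta$ in the projective unit ball. Since $C_{post}(T)=\{\Phi\circ T\}$ is convex and weak$^*$-compact by Lemma~\ref{convex2} and the pairing is bilinear and weak$^*$-continuous in the $\Phi\circ T$ variable, Sion's minimax theorem exchanges the infimum and supremum; hence there is a positive $\eta_0$ with $\norm{\eta_0}{L_1(\N)\hten\M^{op}}\le 1$ and $\lan S,\eta_0\ran-\sup_{\Phi}\lan\Phi\circ T,\eta_0\ran>\tfrac{\delta}{2}$. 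Reinterpreting the pairings through Proposition~\ref{attain} gives $\sup_{\Phi}\lan\Phi\circ T,\eta_0\ran=\norm{(T\ten\id)(\eta_0)}{L_1(\M)\hten\M^{op}}$ and $\lan S,\eta_0\ran=\lan\id,(S\ten\id)(\eta_0)\ran\le\norm{(S\ten\id)(\eta_0)}{L_1(\M)\hten\M^{op}}$, so $\norm{(S\ten\id)(\eta_0)}{}>\norm{(T\ten\id)(\eta_0)}{}+\tfrac{\delta}{2}\norm{\eta_0}{}$. After truncating by a finite projection $e\in\M$ and normalizing $\eta_0$ to a density operator, this contradicts ii).

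The hard part will be the norm bookkeeping in the key lemma: the cb-norm of the channel difference must be realized through the \emph{projective} tensor norm of the reference state, not the $L_1(\M,L_\infty)$-norm of Lemma~\ref{12}~ii), so that the two-sided domination by $\si\ten 1$ and the cancellation of the trace term survive with only $\M$ (and not $\N$) injective. A secondary technical point, exactly as in Theorems~\ref{von} and~\ref{pre}, is the truncation: the functional $\eta_0$ produced by the separation must be replaced by a genuine finite-trace density operator in $L_1(\N\overline{\ten}\M^{op})$ compatible with a finite projection of $\M$, which is handled by the standard density arguments on $\M_0\ten\N$.
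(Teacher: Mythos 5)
Your proposal follows the paper's proof essentially step for step: the same splitting $S=\Phi\circ T+(S-\Phi\circ T)$ against a near-optimal CPTNI functional $R$ from Proposition \ref{attain} for i)$\Rightarrow$ii), and the same minimax argument over the weak$^*$-compact set $C_{post}(T)$ of Lemma \ref{convex2} for ii)$\Rightarrow$i), with the paper likewise invoking a two-algebra version of Lemma \ref{12}~ii) without comment.

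The one place you go beyond the paper is also the weak point: the claim that your key lemma ``needs neither algebra injective.'' Only one of its two inequalities is injectivity-free. The direction $\norm{U-V}{cb}\le 2\sup_\eta\lan U-V,\eta\ran$ (used in ii)$\Rightarrow$i)) indeed needs nothing: from $\norm{x}{L_1(\N)\hten\M^{op}}\le 1$ one gets $\norm{x}{L_1(\N,L_\infty(\M^{op}))}\le 1$, hence $-\si\ten 1\le x\le\si\ten 1$ via Lemma \ref{positive}~i), and $x+\si\ten 1$ has projective norm at most $2$ by the triangle inequality. But the reverse direction $\sup_\eta\lan U-V,\eta\ran\le\frac12\norm{U-V}{cb}$ — exactly the one used in i)$\Rightarrow$ii) to produce the factor $\delta/2$ — requires going from the two-sided domination $-\si\ten 1\le 2\eta-\si\ten 1\le\si\ten 1$ (which only controls the $L_1(\N,L_\infty(\M^{op}))$ norm) back to a bound on the \emph{projective} norm of $2\eta-\si\ten 1$, so that it can be paired against $\norm{U-V}{cb}=\norm{U-V}{(L_1(\N)\hten\M^{op})^*}$. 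Lemma \ref{extend}~i) gives only the useless inequality $\la(x)\le\norm{x}{L_1(\N)\hten\M^{op}}$, and by Theorem \ref{injective} the two norms coincide precisely when $\N$ is injective; the paper's own Lemma \ref{12}~ii) invokes exactly this isomorphism (there legitimately, since both algebras are the injective $\M$). So as written your lemma's proof has a gap unless $\N$ is also injective, or unless you reformulate condition ii) against the $L_1(\N,L_\infty(\M^{op}))$ norm and control the pairing of a cb map with that ball by some other means. (The paper's application of Lemma \ref{12}~ii) to maps $L_1(\N)\to L_1(\M)$ silently faces the same issue, so this is a shared defect rather than a divergence from the source, but your explicit assertion that no injectivity is needed is not justified by the argument you give.)
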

\begin{proof}Let $\rho\in L_1(\N\overline{\ten}\M ^{op})$ be a density operator. For any $\epsilon>0$, we can choose $R:L_1(\M)\to L_1(\M)$ CPTNI such that
\[ \lan R, S\ten \id (\rho)\ran\ge \norm{S\ten \id (\rho)}{L_1(\M)\hten \M^{op}}-\epsilon\pl.\]
Then
\begin{align*}
&\norm{S\ten \id (\rho)}{L_1(\M)\hten \M^{op}} \le \epsilon+\lan R, S\ten id (\rho)\ran\\
\le &\epsilon+\lan R, \Phi\circ T\ten \id (\rho)\ran+ \lan R, (S-T)\ten id (\rho)\ran
\\
\le &\epsilon+\norm{\Phi\circ T\ten \id (\rho)}{L_1(\M)\hten \M^{op}}+ \lan S-\Phi\circ T, \id\ten R^\dag (\rho)\ran
\\
\le &\epsilon+\norm{\Phi\circ T\ten \id (\rho)}{L_1(\M)\hten \M^{op}}+ \frac{1}{2}\norm{S-\Phi\circ T}{cb}\norm{\id\ten R^\dag(\rho)}{L_1(\M)\hten \M^{op}}
\\
\le &\epsilon+\norm{\Phi\circ T\ten \id (\rho)}{L_1(\M)\hten \M^{op}}+ \frac{1}{2}\norm{S-\Phi\circ T}{cb}\norm{\rho}{L_1(\M)\hten \M^{op}}
\end{align*}
where in the second last inequality we used Lemma \ref{12} ii).
Then i)$\Rightarrow$ ii) follows from taking the infimum over all CPTP $\Phi$ and $\epsilon \to 0$. For ii) $\Rightarrow$ i), suppose $\inf_{\Phi  \textnormal{CPTP}} \norm{S- \Phi\circ T}{cb}>\delta$. Let us use the shorthand notation
$\norm{\cdot}{1,\infty}=\norm{\cdot}{L_1(\M)\hten \M^{op}}$.
Using the min-max theorem,
\begin{align*}
\delta<&\inf_{\Phi\pl  \textnormal{CPTP}} \norm{S- \Phi\circ T}{cb}
\\ =& 2\inf_{\Phi \pl  \textnormal{CPTP}} \sup_{\rho\ge 0, \norm{\rho}{1,\infty}\le 1}\lan S- \Phi\circ T, \rho\ran
\\ =&  2\sup_{\rho\ge 0, \norm{\rho}{1,\infty}\le 1} \inf_{\Phi  \textnormal{CPTP}}\lan S- \Phi\circ T, \rho\ran
\\ =&  2\sup_{\rho\ge 0, \norm{\rho}{1,\infty}\le 1} \lan \id, S\ten \id(\rho)\ran- \sup_{\Phi  \textnormal{CPTP}}
\lan \Phi, T\ten \id(\rho)\ran
\\ \le & 2 \sup_{\rho\ge 0, \norm{\rho}{1,\infty}\le 1} \sup_{\Phi  \textnormal{CPTP}}\lan \Phi, S\ten \id(\rho)\ran- \sup_{\Phi  \textnormal{CPTP}}
\lan \Phi, T\ten \id(\rho)\ran
\\ =& 2\sup_{\rho\ge 0, \norm{\rho}{1,\infty}\le 1}\norm{S\ten \id(\rho)}{L_1(\M)\hten \M^{op}}-\norm{T\ten \id(\rho)}{L_1(\M)\hten \M^{op}}.
\end{align*}
Thus there exists a positive $\rho\in L_1(\M)\hten \M^{op}$ violating the inequality in ii). One can then replace $\rho$ by a bipartite density operator $\tilde{\rho}$ in $L_1(\M\overline{\ten} \M^{op})$ as in Theorem \ref{pre}.
\end{proof}
\begin{rem} {\rm In Theorem \ref{apro1} $\&$ \ref{apro2}, we cannot reduce condition ii) to entanglement-breaking CPTP maps and respectively separable density operator as in the case for $\delta=0$. This is because Lemma \ref{12} fails when we restrict the pairing to entanglement-breaking or separable elements.}
\end{rem}

\subsection{Results in $B(H)$ setting}\label{BH case}
The results of the previous subsections subsume the case of $B(H)$ where $H$ is infinite dimensional. However, since this is the case most relevant to quantum information theory, we briefly restate some of our results for $B(H)$ in terms of the conditional min entropy $H_{min}$.  $H_{min}(A|B)$ is the sandwiched R\'enyi $p$-version of $H(A|B)$ at $p=\infty$ and the smooth version of $H_{min}(A|B)$ connects to $H(A|B)$ by quantum asymptotic equipartition
property \cite{tomamichel}.
While the operational meaning of $H(A|B)$ is in i.i.d. asymptotic regime, $H_{min}(A|B)$ has many applications in the one shot setting (\cite{tomamichelbook} and reference therein). The following theorem summarizes the results on quantum majorization, state convertibility and channel factorization.

\begin{theorem} Let $H_A,H_B$ be two infinite dimensional Hilbert spaces. The following statements hold.
\begin{enumerate}
\item[i)] For two bipartite density operators $\rho^{AB},\si^{AB}\in S_1(H_A\ten_2 H_B)$, there exists a quantum channel $\Phi:S_1(H_B)\to S_1(H_{B})$ such that $\operatorname{id}_A\ten \Phi(\rho)=\si$ if and only if for any entanglement-breaking channel $\Psi:S_1(H_A)\to S_1(H_{A})$
\[  H_{min}(A|B)_{\Psi\ten \operatorname{id}(\rho)}\le H_{min}(A|B)_{\Psi\ten \operatorname{id}(\si)}\pl.\]
\item[ii)] For two families of density operators $\{\rho_i\}_{i\in \mathbb{N}}$ and $\{\si_i\}_{i\in \mathbb{N}}$ in $B(H_B)$,
    there exists a quantum channel such that $\Phi(\rho_i)=\si_i$ for all $i\in \mathbb{N}$ if and only if for any finitely supported probability distribution $\la_i$ on $\mathbb{N}$ and any set of density operators $\{\omega_i\}\in B(H_A)$
\[ H_{min}(A|B)_{(\sum_i\la_i\omega_i\ten \rho_i)}\le H_{min}(A|B)_{(\sum_i\la_i\omega_i\ten \si_i)}\pl.\]
\item[iii)] For two quantum channels $T,S:S_1(H_B)\to S_1(H_B)$, there exists a quantum channel $\Phi$ such that $\Phi\circ T =S$ if and only if for any separable density operator $\rho\in S_1(H_A\ten_2 H_B)$,
\ \[H_{min}(A|B)_{\operatorname{id}\ten T(\rho)}\le  H_{min}(A|B)_{\operatorname{id}\ten S(\rho)}.\]
\end{enumerate}
\end{theorem}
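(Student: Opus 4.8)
The plan is to deduce all three statements directly from the general von Neumann algebra results, using that $B(H)$ is injective for every Hilbert space $H$, so that Theorems~\ref{von}, \ref{state} and~\ref{pre} apply with $\M,\N$ taken to be suitable copies of $B(H)$. The one dictionary entry used throughout is the identity $H_{min}(A|B)_\omega=-\log\norm{\omega}{S_1(H_B)\hten B(H_A)}$ from the preliminaries, together with Theorem~\ref{injective}: since $\M=B(H_B)$ is injective, the projective tensor norm $L_1(\M)\hten\N$ coincides with $L_1(\M,L_\infty(\N))$ and genuinely computes $H_{min}$. Because $t\mapsto-\log t$ is strictly decreasing, and because $\Phi\ten\id$ is a complete contraction for the projective norm whenever $\Phi$ is CPTP---so the source operator always carries the larger projective norm, equivalently the smaller $H_{min}$---each inequality ``$H_{min}\le H_{min}$'' in the statement is exactly the reverse inequality between projective tensor norms in the corresponding general theorem.

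For part~i) I would invoke Theorem~\ref{von} with $\M=B(H_B)$, the algebra on which the unknown $\Phi$ acts, and $\N=B(H_A)$, so that $\rho,\si\in S_1(H_A\ten_2 H_B)=L_1(\M\overline{\ten}\N)$ and $\id_A\ten\Phi$ becomes $\Phi\ten\id$ after reordering the two tensor legs. Condition (iii) there ranges over entanglement-breaking CPTP maps $\Psi$ into a finite compression $L_1(e\M e^{op})$, i.e. into $eB(H_B)e^{op}\cong M_k$, whereas part~i) uses an entanglement-breaking channel $\Psi\colon S_1(H_A)\to S_1(H_A)$. To match these I would use that such a channel factors as a measurement followed by a preparation, $\Psi(\cdot)=\sum_j\tau(x_j\,\cdot)\,\omega_j$, so that $\norm{\Psi\ten\id(\rho)}{S_1(H_B)\hten(\text{output})}$ depends on $\Psi$ only through the POVM $\{x_j\}$ and the prepared states $\omega_j$; since only finitely many output states occur, they fit inside a common finite-dimensional corner $M_k$ that embeds into both $S_1(H_A)$ and a compression $eB(H_B)e^{op}$, so swapping one reference algebra for the other and taking a supremum over $e$ identifies the two families of norms and hence the two conditions.

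Parts~ii) and~iii) follow the same template. For ii) I specialize Theorem~\ref{state} to $\M=B(H_B)$: the test element $\sum_i\la_i\rho_i\ten\omega_i$ is the one appearing there once the reference states $\omega_i$, placed in $L_1(\M^{op})\cap\M^{op}$ by the theorem, are instead placed in $B(H_A)$, the two choices giving equal projective tensor norms because the finitely many $\omega_i$ fit inside a common finite-dimensional corner of both algebras (here no projection $e$ is needed, as Theorem~\ref{state} already uses the full reference algebra). For iii) I apply Theorem~\ref{pre} with $\N=\M=B(H_B)$, so that $\Phi$ post-composes $T$; its separable test states $\rho\in L_1(\N)\ten eB(H_B)e^{op}$ correspond, after the same change of reference algebra and supremum over $e$, to separable density operators $\rho\in S_1(H_A\ten_2 H_B)$, with $T\ten\id$ equal to $\id\ten T$ after reordering. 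I expect the main obstacle to be exactly this identification: verifying rigorously that, under the entanglement-breaking (resp. separability) hypothesis, letting the output/reference states range over all of $B(H_A)$ yields the same supremum as letting them range over the finite compressions $eB(H_B)e$ of the general theorems, together with the harmless limit in $e$. This is precisely where the entanglement-breaking and separability assumptions are indispensable: they collapse each comparison to finite classical data insensitive to which injective type~I algebra serves as the reference system.
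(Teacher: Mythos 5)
Your proposal is correct and takes essentially the same route as the paper, which in fact offers no separate proof here: Section~\ref{BH case} simply presents this theorem as a restatement of Theorems~\ref{von}, \ref{state} and~\ref{pre} for $\M=B(H_B)$, $\N=B(H_A)$, via the dictionary $H_{min}(A|B)_\omega=-\log\norm{\omega}{S_1(H_B)\hten B(H_A)}$ and the order-reversal of $-\log$. You are, if anything, more explicit than the paper about the only nontrivial point --- matching the finite corners $e\M e^{op}$ of the general theorems with the reference algebra $B(H_A)$ --- which the paper only acknowledges in the remark that finite-dimensional $H_{A'}$ suffices.
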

The above theorem make senses even when $H_{min}$ equals ``$-\infty$''.
We know by Theorem \ref{state} and \ref{pre} that it suffices to consider all finite dimensional $H_A$ in the equivalence ii) and iii). Similarly, for the equivalence it suffices to consider channels $\Psi:S_1(H_A)\to S_1(H_{A'})$ into a finite dimensional $H_A'$. In these situation, $H_{min}$ will always take finite values. In general, $H_{min}(A|B)$ can be ``$-\infty$'', where the inequalities in above theorem are trivially satisfied.

\section{Tracial convex sets in Vector-valued noncommutative $L_1$-space}
\label{tracialset}
In this section, we discuss the analogue of quantum majorization in vector-valued noncommutative $L_1$-space and the connection to the tracial Hahn-Banach Theorem. Let $(\M,\tau)$ be a semifinite von Neumann algebra equipped with a normal faithful semifinite trace $\tau$. Let $E$ be a operator space.
The $E$-valued noncommutative $L_1$-space was introduced by Pisier in \cite{pisier93}. For $x \in \M_0\ten E$ in the algebraic tensor, we define the $L_1(\M,E)$ norm as follows,
\begin{align}\norm{x}{L_1(\M,E)}=\inf \{\norm{a}{L_2(\M)}\norm{b}{L_2(\M)}\norm{y}{\M \ten_{min}E} \pl | \pl x=a\cdot y \cdot b \}, \label{norm1}\end{align}
where the infimum runs over all factorizations $x=a\cdot y \cdot b:=(a\ten 1_E) y(b\ten 1_E)$ with $a,b\in \M_0$ and $y\in \M\ten E$. The space $L_1(\M,E)$ is defined as the norm completion of $\M_0\ten E$. The $L_1(\M,L_\infty(\N))$ space we discussed in the previous section is the special case of $E$ being a von Neumann algebra $\N$. Recall that a von Neumann algebra $\M$ is hyperfinite if
$\M=\overline{\cup \M_\al}$ is the $w^*$-closure of the union of an increasing net of finite dimensional von Nuemann algebras $\M_\al$. It was proved in \cite[Theorem 3.4]{pisier93} that for hyperfinite $\M$,
\begin{align}L_1(\M,E)\cong L_1(\M)\hten E \label{h}\end{align}
isometrically. Namely, for hyperfinite $\M$, the vector-valued noncommutative $L_1$ space is identified with projective tensor product. Following that, we introduce the following definition of a tracial set in $L_1(\M)\hten E$.
\begin{defi}\label{tracial}
A subset $V\subset L_1(\M)\hten E$ is called a contractively tracial set if for any CPTNI map $\Phi:L_1(\M)\to L_1(\M)$ , $\Phi\ten \operatorname{id}_E(V)\subset V$.
\end{defi}
The matrix level tracial sets are discussed in \cite[Section 6.2]{tracial} as the dual concept of matrix convex set. We refer to their definition as matrix tracial set.
\begin{defi}\label{matrixtracial} A matrix contractively tracial set $(V_n)_n$ is a sequence of subsets $V_n\subset M_n(E)$ such that for any CPTNI map $\Phi:M_n\to M_m$, $\Phi\ten \operatorname{id} (V_n)\subset V_m$.
\end{defi}
This definition was considered in \cite{tracial} for finite dimensional $E$. Indeed, for $\dim E=m$, each element in $V_n \subset M_n(E)\cong M_n^m$ can be identified with a finite sequence $(x_j)\in (M_n)^m$. We discuss the relations of these two definitions in the following proposition.
\begin{prop}Let $H$ be an separable Hilbert space and $(e_n)_n$ be a sequence of projections such that $\dim (e_nH)= n$ and $e_n\to 1$ weakly. Identify $M_n\cong S_1(eH_n)$ as subspace of $S_1(H)$.
\begin{enumerate}
\item[i)] Given a contractively tracial set $ V\subset S_1(H)\hten E$, then the set
\[V[n]=e_n\cdot V \cdot e_n\]
forms a matrix contractively tracial set such that $\overline{\cup_n V[n]}=\overline{V^{\norm{\pl\cdot\pl}{}}}$.
\item[ii)]Given a matrix contractively tracial set $(V_n)\subset M_n(E)$, then the set
\[V=\overline{(\cup_n V_n)^{\norm{\pl\cdot\pl}{}}}\subset S_1(H)\hten E\]
is a closed contractively tracial set such that $V[n]=\overline{V_n}$ .
\end{enumerate}
\end{prop}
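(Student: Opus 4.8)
The plan is to route both directions through the corner compressions and inclusions attached to the projections $e_n$. Write $C_n:S_1(H)\to S_1(H)$ for the compression $C_n(a)=e_nae_n$, whose range is the corner $S_1(e_nH)\cong M_n$, and $\iota_n:M_n\hookrightarrow S_1(H)$ for the inclusion. Then $C_n$ is a CPTNI idempotent, $\iota_n$ is CPTP, and $C_n\iota_n=\operatorname{id}_{M_n}$. Hence $P_n:=C_n\ten\operatorname{id}_E$ is a completely contractive idempotent on $S_1(H)\hten E$ with range $S_1(e_nH)\hten E$; since it admits the completely contractive section $\iota_n\ten\operatorname{id}_E$, this corner embeds isometrically (and complementedly) into $S_1(H)\hten E$, which is what legitimizes the identification with $M_n(E)$ and the view of $\cup_n V_n$ inside $S_1(H)\hten E$. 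The single device driving everything is that a CPTNI map $\Phi:M_n\to M_m$ between corners promotes to the global CPTNI map $\iota_m\Phi C_n$ on $S_1(H)$, while a global CPTNI $\Phi$ restricts to the corner CPTNI map $C_m\Phi\iota_n$.

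For i), first $V[n]=P_n(V)\subset S_1(e_nH)\hten E=M_n(E)$, and because $C_n$ is CPTNI and $V$ is contractively tracial, $V[n]\subset V$; since $P_n$ is idempotent this gives $V[n]=V\cap\operatorname{range}(P_n)$. To see $(V[n])_n$ is matrix contractively tracial, take CPTNI $\Phi:M_n\to M_m$ and $w\in V[n]\subset V$; as $w$ lives in the $n$-corner, $(\iota_m\Phi C_n)\ten\operatorname{id}(w)$ equals $\Phi\ten\operatorname{id}(w)$ read inside $S_1(H)\hten E$, it lies in $V$ (because $\iota_m\Phi C_n$ is CPTNI and $V$ is tracial), and it sits in $\operatorname{range}(P_m)$, so it lands in $V\cap\operatorname{range}(P_m)=V[m]$. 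For the closure identity, $\cup_nV[n]\subset V$ gives one inclusion; for the other I would use that $e_n\to1$ weakly forces $e_n\to1$ strongly (for projections $\|(1-e_n)\xi\|^2=\|\xi\|^2-\langle e_n\xi,\xi\rangle\to0$), hence $e_nae_n\to a$ in trace norm for every $a\in S_1(H)$. Since each $P_n$ is a contraction, a $3\epsilon$/density argument over the algebraic tensor $S_1(H)\ten E$ upgrades this to $P_n(v)\to v$ in $S_1(H)\hten E$ for all $v$, so every $v\in V$ is a limit of $e_nve_n\in V[n]$, yielding $\overline{\cup_nV[n]}=\overline V$.

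For ii), the isometric corner embeddings make $V=\overline{\cup_nV_n}$ a well-defined closed subset of $S_1(H)\hten E$. To show $V$ is contractively tracial, let $\Phi:S_1(H)\to S_1(H)$ be CPTNI and $w\in V_n$. For each $m$ the corner map $C_m\Phi\iota_n:M_n\to M_m$ is CPTNI, so $(C_m\Phi\iota_n)\ten\operatorname{id}(w)=P_m\big(\Phi\ten\operatorname{id}(w)\big)\in V_m\subset V$ by the matrix tracial hypothesis; letting $m\to\infty$ and invoking $P_m(z)\to z$ gives $\Phi\ten\operatorname{id}(w)\in V$, and continuity of $\Phi\ten\operatorname{id}$ with closedness of $V$ extends this to $\Phi\ten\operatorname{id}(V)\subset V$. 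Finally, $V[n]=\overline{V_n}$: now that $V$ is contractively tracial, $P_n(V)\subset V$, so $V[n]=P_n(V)=V\cap\operatorname{range}(P_n)$ is closed as an intersection of closed sets. Moreover $C_n\iota_m:M_m\to M_n$ is CPTNI, whence $P_n(V_m)\subset V_n$ for all $m$ and thus $V[n]=P_n(V)\subset\overline{V_n}$; combined with $V_n\subset V[n]$ (as $V_n\subset V$ and $P_n$ fixes the $n$-corner) and the closedness of $V[n]$, this forces $\overline{V_n}\subset V[n]$, giving equality.

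The main obstacle is not any single estimate but making the bridge between the two tracial conditions faithful: one must verify that promoting and restricting CPTNI maps through $\iota_m\Phi C_n$ and $C_m\Phi\iota_n$ genuinely reproduces the tensor action on the corners, and that each corner $S_1(e_nH)\hten E$ embeds \emph{isometrically} into $S_1(H)\hten E$ despite the projective tensor product being non-injective. Both rest on the complemented structure $C_n\iota_n=\operatorname{id}$, and the remaining delicate point is the convergence $e_nve_n\to v$ in the projective tensor norm, which is exactly where the weak (hence strong) convergence $e_n\to1$ and the complete contractivity of the compressions are indispensable.
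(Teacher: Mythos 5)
Your proof is correct and follows essentially the same route as the paper: compressions $e_n\cdot(\cdot)\cdot e_n$ are CPTNI so $V[n]\subset V$, CPTNI maps between corners promote to (and restrict from) global CPTNI maps, and $e_n\cdot x\cdot e_n\to x$ in the projective norm handles the closure statements. You supply some details the paper leaves implicit (the isometric complementation of the corners via $C_n\iota_n=\operatorname{id}$ and the inclusion $\overline{V_n}\subset V[n]$), but the underlying argument is the same.
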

\begin{proof}i) Let $e\in B(H)$ be a projection. Because the map $\rho \to e\rho e$ is CPTNI on $S_1(H)$, $x\in V$ implies that $e\cdot x \cdot e \in V$. Then for any $\Phi:M_n\to M_m$ CPTNI, $\Phi \ten \operatorname{id} (e_n\cdot  x\cdot  e_n)\in V[m] \subset V$. Thus $(V[n])_n$ is a matrix contractively tracial set. Moreover, for any $\epsilon>0$ and $x\in S_1(H)\hten E$, $\lim_{n}\norm{e_n\cdot x \cdot e_n -x}{S_1(H)\hten E}\to 0$. Then $\overline{V^{\norm{\pl\cdot\pl}{}}}\subset \overline{\cup_n V[n]^{\norm{\pl\cdot\pl}{}}}$ and the other inclusion follows from $V[n]\subset V$.

ii) Let $x\in V_n$. For $\Phi:S_1(H)\to S_1(H)$ CPTNI and $\rho\in V_n$, we find that \[e_m\cdot \Phi\ten \operatorname{id}(\rho)\cdot e_m \in V_m\]
because $x\to e_m\Phi(x) e_m $ can be viewed as a CPTNI map from $M_n$ to $M_m$. Let $x_k\in V_{n(k)}$ be a sequence such that $x_k\to x$ in $S_1(H)\hten E$. Then $\Phi\ten \operatorname{id}(x_m)\to \Phi\ten \operatorname{id} (x)$, which implies $\Phi\ten \operatorname{id} (x)\in V$. This verifies that $V$ is contractively tracial. In particular, the fact that $e_n\cdot x_k\cdot e_n$ converges to $e_n\cdot x\cdot e_n$ implies that $V[n]\subset\overline{V_n} $.
\end{proof}
The above proposition shows that Definition \ref{tracial} and Definition \ref{matrixtracial} are closely related for the case $\M=B(H)$. In particular, they coincide for closed sets. It is easy to see that the convex hull of a contractively tracial set is again contractively tracial. In general, contractively tracial sets are not necessary convex.

The next theorem is the tracial Hahn-Banach separation theorem for convex contractively tracial sets. For matrix contractively tracial sets with $\dim E<\infty$, this was obtained in \cite[Theorem 7.6]{tracial}. Using projective tensor product, we can now consider semi-finite injective $\M$ and a general operator space $E$.
\begin{theorem}\label{tracialHB}Let $\M$ be an injective semifinite von Neumann algebra. Let $V$ be a closed convex contractively tracial set in $L_1(\M)\hten E$ and $x\in L_1(\M)\hten E$. Then $x\notin V$ if and only if there exists a CB map $T:E\to \M^{op}$ such that for each $y\in V$, these exists a density operator $\omega_y\in L_1(\M)$ depending on $y$ such that \[\Re \pl \operatorname{id}\ten T(y)\le \omega_y\ten 1\]
and for any density operator $\omega$,
\[ \Re \pl \operatorname{id}\ten T(x)\nleq\omega\ten 1 \]
\end{theorem}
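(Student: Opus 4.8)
The plan is to read the two conditions on $T$ through the scalar functional $\la(\cdot)$ of Lemma~\ref{extend}, applied to elements of $L_1(\M)\hten\M^{op}\cong L_1(\M,L_\infty(\M^{op}))$ (the identification being Theorem~\ref{injective}, which is where injectivity of $\M$ enters). Indeed, the requirement that every $y\in V$ admit a density $\omega_y$ with $\Re\,\id\ten T(y)\le\omega_y\ten 1$ says exactly that $\la(\Re\,\id\ten T(y))\le 1$, while the requirement that no density $\omega$ satisfy $\Re\,\id\ten T(x)\le\omega\ten 1$ says $\la(\Re\,\id\ten T(x))>1$. With this dictionary the ``if'' direction is immediate and uses neither traciality nor Proposition~\ref{attain}: were $x\in V$, the first condition applied to $y=x$ would produce a density $\omega_x$ with $\Re\,\id\ten T(x)\le\omega_x\ten 1$, contradicting the second condition; hence $x\notin V$.

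For the ``only if'' direction I would separate the point $x$ from the norm-closed convex set $V$ by the Hahn--Banach separation theorem, obtaining (via the duality $(L_1(\M)\hten E)^*\cong CB(E,\M^{op})$ of \eqref{pro}) a map $T_0\in CB(E,\M^{op})$ and reals $s<t$ with $\sup_{y\in V}\Re\langle T_0,y\rangle\le s<t\le\Re\langle T_0,x\rangle$. Since the zero map is CPTNI and $V$ is contractively tracial, $0\in V$, so $s\ge 0$ and $t>0$. The key step is the identity
\[ \sup_{y\in V}\la(\Re\,\id\ten T_0(y))=\sup_{y\in V}\Re\langle T_0,y\rangle\le s. \]
To prove it I would invoke Proposition~\ref{attain} with $\N=\M^{op}$, which gives $\la(\Re\,z)=\sup\{\Re\langle\Phi,z\rangle:\Phi:L_1(\M)\to L_1(\M)\ \text{CPTNI}\}$; combining this with the Fubini-type swap $\langle\Phi,\id\ten T_0(y)\rangle=\langle T_0,\Phi\ten\id(y)\rangle$ used throughout the paper and with contractive traciality ($\Phi\ten\id(y)\in V$) yields the inequality $\le$, while the choice $\Phi=\id$ yields $\ge$. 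On the other hand $\la(\Re\,\id\ten T_0(x))\ge\Re\langle T_0,x\rangle\ge t$. Rescaling by a constant $c>0$ with $cs<1<ct$ (possible precisely because $0\le s<t$) and setting $T=cT_0$, positive homogeneity of $\la$ gives $\sup_{y\in V}\la(\Re\,\id\ten T(y))<1$ and $\la(\Re\,\id\ten T(x))>1$. Finally I would re-express these via the dictionary: the strict bound $\la(\Re\,\id\ten T(y))<1$ produces an honest density $\omega_y$ dominating $\Re\,\id\ten T(y)$ (by padding a subtracing witness of mass $<1$ to full trace), and $\la(\Re\,\id\ten T(x))>1$ forbids any dominating density $\omega$ for $x$.

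The main obstacle is the central identity relating the scalar separation $\sup_{y\in V}\Re\langle T_0,\cdot\rangle$ to $\sup_{y\in V}\la(\Re\,\id\ten T_0(\cdot))$; this is exactly the content that rests on injectivity of $\M$, since it is Proposition~\ref{attain}---equivalently $L_1(\M,L_\infty(\M^{op}))\cong L_1(\M)\hten\M^{op}$ from Theorem~\ref{injective}---that identifies $\la$ as the support functional of the CPTNI maps and thereby lets contractive traciality of $V$ be brought to bear. A secondary point requiring care is the sign/scaling bookkeeping: a positive rescaling can insert the threshold value $1$ strictly between $\sup_{y\in V}\la$ and $\la(\Re\,\id\ten T_0(x))$ only when the latter is positive, which is guaranteed by the observation that $0\in V$.
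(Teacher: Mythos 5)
Your proposal is correct and follows essentially the same route as the paper: Hahn--Banach separation in $L_1(\M)\hten E$ against the dual $CB(E,\M^{op})$, then Proposition \ref{attain} (hence injectivity) to identify $\Re\lan T_0,\cdot\ran$ maximized over CPTNI maps with $\la(\Re\,\id\ten T_0(\cdot))$, contractive traciality to absorb that maximization into the supremum over $V$, and a rescaling to place the threshold at $1$. Your explicit remark that $0\in V$ forces the separating constants to be nonnegative is a small point the paper's proof leaves implicit when it divides by $\la$, and is handled correctly here.
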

\begin{proof}The ``if'' direction is trivial. For the other direction,
suppose $\si\notin V$. Using the duality $L_1(\M)\hten E^*=CB(E,\M^{op})$, there exists a CB map $T:E\to \M^{op}$
\[\Re \pl \lan T ,x\ran >\sup_{\rho\in V} \Re \pl \lan T, y\ran. \]
Reinterpreting the dual pairing and using the Proposition \ref{attain},
\begin{align*}\Re \pl \lan T,x\ran&=\Re \pl \lan \operatorname{id}_\M, \operatorname{id}\ten T(x)\ran\le \sup_{\Phi \pl  \textnormal{CPTNI}} \Re \pl \lan \Phi, \operatorname{id}\ten T(x)\ran \\ &= \inf \{\tau(\omega)|  \Re\pl  \operatorname{id}\ten T(x)\le \omega \ten 1,\pl, \omega \ge 0\}\pl.\end{align*}
On the other hand, because $V$ is contractively tracial,
\begin{align*}
\sup_{\rho\in V} \Re \pl \lan T,y\ran& \ge \sup_{\rho\in V, \Phi  \textnormal{ CPTNI}} \Re \pl \lan T, \Phi\ten \operatorname{id}( y)\ran
\\&=\sup_{\rho\in V} \inf \{\tau(x)| \Re\pl  T\ten \operatorname{id}(y)\le x \pl, x \ge 0\}
\end{align*}
Take $\la$ such that $\Re \pl \lan T ,x \ran >\la >\sup_{y\in V} \Re \pl \lan T, y\ran$. Then for the map $\tilde{T}=\frac{1}{\la}T$,
\[ \sup_{y\in V} \inf \{\tau(\omega)| \Re\pl  \tilde T\ten \operatorname{id}(y)\le \omega\ten 1  \pl, \omega \ge 0\}<1< \inf \{\tau(\omega)| \Re\pl  \tilde T\ten \operatorname{id}( x)\le \omega\ten 1 \pl, \omega \ge 0\}\]
which completes the proof.
\end{proof}
Using the similar idea, we obtain a variant of Effros-Winkler's separation theorem \cite{EW97}. Recall a CP map $\Phi$ is sub-unital if $\Phi(1)\le 1$.
\begin{theorem}\label{weak}
Let $E$ be a operator space.
Let $V\subset M_n(E)$ be a closed convex set such $\Phi\ten \operatorname{id}(V)\subset V$ for any CP sub-unital $\Phi:M_n\to M_n$. Then $x\notin V$ if and only if there exists a map $T:E\to M_n$ such that for each $y\in V$, there exists a density operator $\omega_y\in M_n$ depending on $y$ such that
\[ \Re \pl \operatorname{id}\ten T(y)\le 1\ten \omega_y \pl,\]
 and for any density operator $\omega$,
\[\Re \pl \operatorname{id}\ten T(x)\nleq 1\ten \omega\pl.\]
\end{theorem}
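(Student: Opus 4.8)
The ``if'' direction is immediate: were such a $T$ to exist while $x\in V$, the choice $y=x$ would furnish a density operator $\omega_x$ with $\Re\,\operatorname{id}\ten T(x)\le 1\ten\omega_x$, contradicting the stated non-domination $\Re\,\operatorname{id}\ten T(x)\nleq 1\ten\omega$; hence $x\notin V$. For the converse I would follow the template of Theorem~\ref{tracialHB}. Since $V$ is closed and convex in the Banach space $M_n(E)$ and $x\notin V$, the Hahn--Banach separation theorem yields a continuous linear functional separating $x$ from $V$. Every such functional has the form $y\mapsto\langle T,y\rangle:=\langle\operatorname{id},\operatorname{id}\ten T(y)\rangle$ for a (bounded, not necessarily completely bounded) linear map $T\colon E\to M_n$, where $[T(v)]_{ij}$ is read off from the $(i,j)$-slot of the functional; so I obtain $T$ and a real $\lambda$ with
\[\Re\,\langle T,x\rangle\;>\;\lambda\;>\;\sup_{y\in V}\Re\,\langle T,y\rangle\,.\]

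The analytic content is the min-entropy/SDP duality in the injective algebra $M_n$: for self-adjoint $w\in M_n\ten M_n$,
\[\nu(w):=\inf\{\operatorname{tr}(\omega)\,|\,w\le 1\ten\omega,\ \omega\ge 0\}\;=\;\sup\{\langle\Phi,w\rangle\,|\,\Phi\colon M_n\to M_n\ \text{CP sub-unital}\}\,,\]
the dual optimizer being the Choi matrix $\chi_\Phi$, whose constraint $\operatorname{tr}_1(\chi_\Phi)\le 1$ encodes sub-unitality of $\Phi$; this is the analogue of Lemma~\ref{sup} and Proposition~\ref{attain} with the two (injective) tensor legs interchanged, so that the density operator now sits in the codomain leg of $T$. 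As $\operatorname{id}_{M_n}$ is itself CP sub-unital, $\Re\,\langle T,x\rangle=\Re\,\langle\operatorname{id},\operatorname{id}\ten T(x)\rangle\le\nu(\Re\,\operatorname{id}\ten T(x))$. Conversely, the invariance $\Phi\ten\operatorname{id}_E(V)\subset V$ for CP sub-unital $\Phi$ lets me enlarge the right-hand supremum: transporting $\Phi$ from the matrix leg of $y\in V$ across the pairing turns it into the Choi-matrix leg-swap of its Hilbert--Schmidt adjoint, which is again CP sub-unital and exhausts all such maps, giving
\[\sup_{y\in V}\Re\,\langle T,y\rangle\;\ge\;\sup_{y\in V}\sup_{\Phi}\Re\,\langle\Phi,\operatorname{id}\ten T(y)\rangle\;=\;\sup_{y\in V}\nu(\Re\,\operatorname{id}\ten T(y))\,.\]
Combining the three displays gives $\nu(\Re\,\operatorname{id}\ten T(x))>\lambda>\sup_{y\in V}\nu(\Re\,\operatorname{id}\ten T(y))$.

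Finally I would rescale $T$ by $\lambda^{-1}$ and unwind $\nu$ via its positive homogeneity. For $y\in V$ the bound $\nu(\Re\,\operatorname{id}\ten\lambda^{-1}T(y))<1$ produces $\omega\ge 0$ of trace $<1$ with $\Re\,\operatorname{id}\ten\lambda^{-1}T(y)\le 1\ten\omega$, and enlarging $\omega$ to trace one yields the desired density operator $\omega_y$; for $x$ the bound $\nu(\Re\,\operatorname{id}\ten\lambda^{-1}T(x))>1$ says no $\omega$ of trace $\le 1$, in particular no density operator, satisfies $\Re\,\operatorname{id}\ten\lambda^{-1}T(x)\le 1\ten\omega$. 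The main obstacle is bookkeeping rather than analysis: unlike Theorem~\ref{tracialHB}, the density operator here lives in the codomain leg while the invariance acts on the matrix leg, so one must verify that the ``leg-swap and adjoint'' of the Choi matrix sending $\Phi$ across the pairing preserves \emph{sub-unitality} (the swap preserves positivity, the transpose preserves $\Phi(1)\le 1$) --- this is precisely where the Effros--Winkler (matrix-convex) duality supplants the tracial one. A secondary point is strong duality with no gap in the $\nu$-identity, which in finite dimensions follows from Slater's condition (take $\omega$ a large multiple of the identity), guaranteeing that $\nu(w)<1$ genuinely yields an admissible $\omega$.
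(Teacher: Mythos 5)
Your proof is correct and follows essentially the same route as the paper's: Hahn--Banach separation in $M_n(E)^*=S_1^n\hten E^*$, reinterpretation of the separating functional as a map $T:E\to M_n$, transfer of the CP sub-unital invariance of $V$ across the pairing, and the duality between $\inf\{\tau(\omega)\mid \Re\,w\le 1\ten\omega\}$ and the supremum over CP sub-unital maps. You are in fact somewhat more explicit than the paper about the two points it glosses over --- that the key identity is a \emph{strong} (no-gap) duality, and the final rescaling by $\lambda^{-1}$ --- but the underlying argument is identical.
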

\begin{proof}Suppose $x\notin V$. Because $M_n$ is finite dimensional,
we have $M_n(E)^*=S_1^n\hten E^*$. Then there exists an element $T\in E^*\hten S_1^n$ such that
\begin{align}\label{sep3}   \Re \pl \lan T, x\ran > \sup_{y \in V} \Re\pl \lan T, y\ran\pl.\end{align}
We identify $T\in E^*\hten S_1^n$  with a map $T:E \to S_1^n$. Then the pairing on the left hand side of (\ref{sep3}) can be rewritten as
\[\Re \pl \lan T, x\ran= \Re \lan \operatorname{id}_{M_n}, \operatorname{id} \ten T(x)\ran \le \inf \{\tau(\omega)| \Re\pl  \operatorname{id}\ten T(x)\le 1\ten \omega  \pl, \omega \ge 0\} \pl.\]
Here the second pairing is between $CB(M_n,M_n)=(M_n\hten S_1^n)^*$. For the right hand side of (\ref{sep3}),
\begin{eqnarray*}
\sup_{y\in V}\Re \pl \lan T, y\ran&=&\sup_{y\in V}\sup_{\Phi\pl \text{CP sub-unital}}\Re \pl \lan T, \Phi\ten \operatorname{id}(y)\ran
=\sup_{y\in V}\sup_{\Phi}\Re \lan \Phi, \operatorname{id} \ten T(x)\ran  \\
&\le& \sup_{y\in V}\inf \{\tau(\omega)| \Re\pl   \operatorname{id}\ten T(y)\le 1\ten \omega  \pl, \omega \ge 0\} \pl.
\end{eqnarray*}
Then the assertion follows from the inequality \eqref{sep3}.
\end{proof}
Recall that a contractively matrix convex set is a sequence $(V_n)\subset M_n(E)$ such that i) for any CP sub-unital $\Phi:M_m\to M_n$, $\Phi\ten \operatorname{id}(V_m)\subset V_n$; and ii) for any $a\in V_m,b\in V_n$, $a\oplus b\in V_{n+m}$. Effros-Winkler's theorem stated for matrix convex set admits a stronger separation: there exists a density operator $\omega$ uniform for all $y$ such that $\Re \pl \operatorname{id}\ten T(y)\le 1\ten \omega$. A similar lemma for tracial sets was given in \cite[Lemma 7.4]{tracial}.
The above Theorem \ref{weak} leads to a weaker separation because we consider convex sets closed under CP sub-unital maps without assumption ii).

\section{Norm separations on projective tensor product}\label{sec:opspace}
In this section, we discuss the analogue of quantum majorization on projective tensor product. Recall that a operator space $G$ is \emph{1-locally reflexive} if for any finite dimensional operator space $G$, we have the complete isometry
\[CB(E,G^{**})\cong CB(E,G)^{**}\pl.\]
It is clear from the definition that $G=G^{**}$ is reflexive implies that $G$ is $1$-locally reflexive. It was proved by Effros, Junge, and Ruan \cite{EJR} that the predual of von Neumann algebras are $1$-locally reflexive. Another property needed in our discussion is completely contractive approximation property (CCAP). A operator space $E$ is  \emph{CCAP} if there exists a net of finite rank completely contractive maps $\Phi_\al:E\to E$ such that for any $x$, $\Phi_\al(x)\to x$ in norm. In the setting of operator spaces, this is an analog of $w^*$-CPAP (or injectivity).

The following lemma shows that these two properties combined give the desired norm attaining property similar to Proposition \ref{attain}.
Throughout this section, we write $CB$ for completely bounded and $CC$ for completely contractive.
\begin{lemma}\label{wdense}Let $E$ be CCAP. Then $CB(E,G)\subset CB(E,G^{**})$ is $w^*$-dense in the sense of $CB(E,G^{**})=(E\hten G^*)^*$. If in additional $G$ is $1$-locally reflexive, then
\[\norm{\rho}{F\hten G^*}=\sup\{ \Re \pl \lan \Psi, \rho\ran | \Psi:E\to G \pl \text{ CC}\}.\]
\end{lemma}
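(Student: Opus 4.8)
The plan is to run the duality $(E\hten G^*)^*\cong CB(E,G^{**})$ from \eqref{pro} (with $F=G^*$, so $F^*=G^{**}$) and to reduce the norm formula to a \emph{norm-controlled} $w^*$-approximation of complete contractions $E\to G^{**}$ by complete contractions $E\to G$. Since any complete contraction $\Psi:E\to G$ sits in the unit ball of $CB(E,G^{**})$ via $G\hookrightarrow G^{**}$, the inequality $\sup\{\Re\pl\lan\Psi,\rho\ran\pl|\pl \Psi:E\to G\ \text{CC}\}\le \norm{\rho}{E\hten G^*}$ is automatic (and the supremum is unaffected by inserting $\Re$, since CC maps are stable under multiplication by unimodular scalars). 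The content is the reverse inequality, which amounts to showing that the CC maps $E\to G$ are $w^*$-dense in the unit ball of $CB(E,G^{**})$ for the point-$w^*$ topology determined by $E\hten G^*$.

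For the first assertion I would argue as follows. Fix $\Phi\in CB(E,G^{**})$ and a basic $w^*$-neighbourhood given by finitely many test tensors $w_1,\dots,w_r\in E\hten G^*$. Using CCAP, choose a finite-rank complete contraction $\Phi_\al:E\to E$ close to the identity on the finitely many $E$-components appearing in $w_1,\dots,w_r$; then $\Phi\circ\Phi_\al$ lies within the prescribed tolerance of $\Phi$ on each $w_j$ and is finite rank, say $\Phi\circ\Phi_\al=\sum_{i=1}^d f_i(\cdot)\,\eta_i$ with $f_i\in E^*$ and $\eta_i\in G^{**}$. By Goldstine's theorem pick $g_i\in G$ with $\lan g_i,\phi\ran$ close to $\lan\eta_i,\phi\ran$ for all the finitely many $G^*$-components $\phi$ of the $w_j$; then $\sum_i f_i(\cdot)\,g_i\in CB(E,G)$ is a finite-rank map lying in the neighbourhood. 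This proves $w^*$-density using CCAP only.

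For the norm formula I refine this to keep the cb-norm under control, and here $1$-local reflexivity enters. Given $\rho$ (which by density I take in the algebraic tensor $E\ten G^*$) and a complete contraction $\Phi$ with $\Re\pl\lan\Phi,\rho\ran>\norm{\rho}{E\hten G^*}-\e$, use CCAP to pass to $\Phi\circ\Phi_\al=\Theta\circ q_\al$, where $\Phi_\al=\iota_\al\circ q_\al$ factors through its finite-dimensional range $E_\al$, $q_\al:E\to E_\al$ is a complete contraction, and $\Theta:=\Phi\circ\iota_\al\in CB(E_\al,G^{**})$ has cb-norm $\le 1$. Since $E_\al$ is finite dimensional, $1$-local reflexivity of $G$ gives the (canonical, hence embedding-compatible) complete isometry $CB(E_\al,G^{**})\cong CB(E_\al,G)^{**}$, so $\Theta$ lies in the unit ball of the bidual of $CB(E_\al,G)$. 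Goldstine's theorem then produces a net $\Psi_\beta$ in the unit ball of $CB(E_\al,G)$, i.e. complete contractions $E_\al\to G$, converging to $\Theta$ in $\sigma(CB(E_\al,G)^{**},CB(E_\al,G)^*)$; testing against the image of the natural map $E_\al\hten G^*\hookrightarrow CB(E_\al,G)^*$ yields $\lan\Psi_\beta,(q_\al\ten\id)\rho\ran\to\lan\Theta,(q_\al\ten\id)\rho\ran=\lan\Phi\circ\Phi_\al,\rho\ran$. Setting $\tilde\Psi_\beta=\Psi_\beta\circ q_\al:E\to G$, which is again a complete contraction, gives $\Re\pl\lan\tilde\Psi_\beta,\rho\ran\to\Re\pl\lan\Phi\circ\Phi_\al,\rho\ran>\norm{\rho}{E\hten G^*}-2\e$ along a suitable tail, and letting $\e\to0$ produces the reverse inequality.

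The \emph{main obstacle} is precisely this norm control: the bare coordinate-Goldstine argument of the first part replaces $\eta_i\in G^{**}$ by elements of $G$ with no control on the cb-norm of the resulting map, so it only yields $w^*$-density, not equality of suprema. Using the finite-dimensional domain $E_\al$ to identify $CB(E_\al,G^{**})$ with the bidual $CB(E_\al,G)^{**}$ — exactly the hypothesis of $1$-local reflexivity — is what upgrades Goldstine to genuine complete contractions into $G$. Along the way one must verify the compatibility I flagged, namely that under this identification the predual $E_\al\hten G^*$ embeds into $CB(E_\al,G)^*$ so that the two $w^*$-topologies agree on the relevant functionals; this is where the canonical (natural-in-$E_\al$) nature of the $1$-local reflexivity isometry is used. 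The CCAP hypothesis serves only to reduce to a finite-dimensional domain so that $1$-local reflexivity can be applied.
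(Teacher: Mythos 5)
Your proof is correct and follows essentially the same route as the paper's: approximate $\rho$ by an algebraic tensor, use CCAP to pass to the finite-dimensional range $E_\al$ of an approximating finite-rank complete contraction, and then approximate the restricted map $E_\al\to G^{**}$ by maps into $G$ --- via Goldstine for mere $w^*$-density, and via the $1$-local reflexivity isometry $CB(E_\al,G^{**})\cong CB(E_\al,G)^{**}$ together with Goldstine for the norm-controlled version. The only difference is one of exposition: you make explicit the compatibility of the predual pairings under that identification and the factorization $\Phi_\al=\iota_\al\circ q_\al$, both of which the paper leaves implicit.
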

\begin{proof} Let $\Phi_\al: E\to E$ be a net of CC maps such that $\Phi_{\al}(x)\to x$ in norm for any $x\in E$. For $\rho \in E\hten G^*$ with $\norm{\rho}{E\hten G^*}=1$, we can choose a finite tensor sum $\rho_0=\sum_{j=1}^n x_j\ten y_j$ such that
$\norm{\rho-\rho_0}{E\hten G^*}\le \epsilon$. Then for $T: E\to G^{**}$ with $\norm{T}{cb}=1$, there exists an $\al$ such that
\begin{align*} |\lan T\circ \Phi_\al-T, \rho \ran| &\le |\lan T\circ \Phi_\al- T, \rho-\rho_0 \ran|+ |\lan T\circ \Phi_\al-T, \rho_0 \ran|\\
&\le |\lan T\circ \Phi_\al- T, \rho-\rho_0 \ran|+ |\lan T, \Phi\ten \operatorname{id}(\rho_0)-\rho_0 \ran|\le 2\epsilon+\epsilon \pl.\end{align*}
Let $E_\al$ be the range of $\Phi_\al$ as a finite dimensional subspace of $E$ and $T|_{E_\al}\in CB(E_\al,G^{**})$ be the restriction of $T$ to $E_\al$. There exists $T_\al\in CB(E_\al,G)$ such that \[|\lan T_\al-T, \Phi_\al\ten \operatorname{id} (\rho_0)\ran |=|\lan (T_\al-T)\circ \Phi_\al, \rho_0\ran|\le \epsilon \pl.\]
Therefore $T_\al\circ \Phi_\al:E\to G$ is CB and
\begin{align*}|\lan T_\al\circ \Phi_\al-T, \rho\ran|
 \le & |\lan T\circ \Phi_\al-T, \rho\ran|  +|\lan (T_\al-T)\circ \Phi_\al, \rho-\rho_0\ran|+|\lan (T_\al-T)\circ \Phi_\al, \rho_0\ran|
\\ \le & 3\epsilon+2\epsilon + \epsilon = 6\epsilon\pl  \end{align*}
which proves the $w^*$-density of $CB(E,G)\subset CB(E,G^{**})$. If $G$ is $1$-locally reflexive, $T_\al$ and $T_\al\circ \Phi_\al$ can be CC because of the isometry $CB(E_\al,G^{**})\cong CB(E_\al,G)^{**}$\end{proof}

The following theorem is the analog of quantum majorization and channel factorization in the abstract operator space setting.
\begin{theorem}\label{normsep}Let $E,F,G$ be operator spaces. Suppose one of the following condition holds: \begin{enumerate}
\item[a)]$G$ is reflexive;
\item[b)] $G$ is $1$-locally reflective and $F$ is CCAP
\end{enumerate}
Then the following two statements hold:
\begin{enumerate}
\item[i)] For $\rho\in E\hten F$ and $\si\in E\hten G$,  there exists a sequence of CC maps $u_n:F\to G$ such that $\operatorname{id}\ten u_n(\rho)\to \si$ in the norm of $E\hat{\ten} G$ if and only if for any CB map $v:E\to G^* $, \[\norm{v\ten \operatorname{id}(\rho)}{G^*\hat{\ten}F}\ge \norm{v\ten \operatorname{id}(\si)}{G^*\hat{\ten}G}.\]
\item[ii)]For $T\in CB(E,F)$ and $S\in CB(E,G)$, there exists a net of CC $u_\al:F\to G$ such that $u_\al\circ T\to S$ in the point-weak topology if and only if for any $x\in E\ten G^* $, \[\norm{T\ten \operatorname{id}(x)}{F\hten G^*}\ge \norm{S\ten \operatorname{id}(x)}{G\hten G^*}\pl.\]
\end{enumerate}
\end{theorem}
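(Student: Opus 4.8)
The plan is to follow the template of Theorems \ref{von} and \ref{pre}: recast each equivalence as a Hahn--Banach separation and convert the separating functional into the asserted norm inequality by means of the norm-attaining identity of Lemma \ref{wdense}. In both parts the direction i)$\Rightarrow$ii) is the routine factorization estimate. For i), if $\id\ten u_n(\rho)\to\si$ in the norm of $E\hten G$ with each $u_n$ completely contractive, then, since legs on disjoint tensor factors commute and $v\ten\id$ is continuous, $v\ten\id(\si)=\lim_n(\id\ten u_n)(v\ten\id(\rho))$ in $G^*\hten G$ for every CB map $v:E\to G^*$; as $\norm{u_n}{cb}\le 1$ this yields $\norm{v\ten\id(\si)}{G^*\hten G}\le\norm{v\ten\id(\rho)}{G^*\hten F}$. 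For ii) the same manipulation, now using the point-weak convergence $u_\al\circ T\to S$ and the weak lower semicontinuity of the norm on $G\hten G^*$, gives i)$\Rightarrow$ii).

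For the converse of i) I would form the convex set $C(\rho)=\{\id\ten u(\rho)\mid u:F\to G\text{ CC}\}\subset E\hten G$ (convex since convex combinations of completely contractive maps are completely contractive); the desired sequence $u_n$ exists exactly when $\si$ lies in the norm closure $\overline{C(\rho)}$. Assuming $\si\notin\overline{C(\rho)}$, Hahn--Banach in $E\hten G$ together with $(E\hten G)^*=CB(E,G^*)$ furnishes a CB map $v:E\to G^*$ with $\Re\lan v,\si\ran>\sup_{u\text{ CC}}\Re\lan v,\id\ten u(\rho)\ran$, and the two sides are rewritten as projective norms. Writing $\lan v,\si\ran=\lan v\ten\id(\si),\id_{G^*}\ran$ in the pairing $(G^*\hten G)^*=CB(G^*,G^*)$ and using $\norm{\id_{G^*}}{cb}=1$ gives $\Re\lan v,\si\ran\le\norm{v\ten\id(\si)}{G^*\hten G}$. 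On the other side, the pairing $\lan v,\id\ten u(\rho)\ran$ equals the evaluation of $u$ against the flip of $v\ten\id(\rho)$ inside $F\hten G^*$, so the norm-attaining identity for completely contractive maps (Lemma \ref{wdense} under (b), with $F$ the CCAP space and $G$ the $1$-locally reflexive one; its immediate reflexive analogue under (a)) identifies $\sup_{u\text{ CC}}\Re\lan v,\id\ten u(\rho)\ran$ with $\norm{v\ten\id(\rho)}{G^*\hten F}$, the projective norm being flip-invariant. Combining the two bounds contradicts ii).

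Part ii) proceeds identically in the pairing $(E\hten G^*)^*=CB(E,G^{**})$. The relevant set is $C_{post}(T)=\{u\circ T\mid u:F\to G\text{ CC}\}\subset CB(E,G)$, and statement i) of part ii) is precisely that $S$ lies in its point-weak (weak$^*$) closure; as that closure is automatically weak$^*$-closed and convex, separation yields a weak$^*$-continuous functional, i.e.\ an element $x\in E\hten G^*$, which after replacing it by a nearby finite tensor in $E\ten G^*$ satisfies $\Re\lan S,x\ran>\sup_{u\text{ CC}}\Re\lan u\circ T,x\ran$. Rewriting $\lan S,x\ran=\lan S\ten\id(x),\id_G\ran\le\norm{S\ten\id(x)}{G\hten G^*}$, and applying the norm-attaining identity to $T\ten\id(x)\in F\hten G^*$ (no flip is needed, as this element already sits in $F\hten G^*$) to get $\sup_{u\text{ CC}}\Re\lan u\circ T,x\ran=\norm{T\ten\id(x)}{F\hten G^*}$, produces the violation $\norm{S\ten\id(x)}{G\hten G^*}>\norm{T\ten\id(x)}{F\hten G^*}$. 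This is cleaner than Theorem \ref{pre}: because statement i) already refers to the closure, no analogue of Lemma \ref{convex2} is required.

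The crux is the correct deployment of the norm-attaining identity on the $\rho$- (resp.\ $T$-) side, where conditions (a) and (b) enter decisively. Under (b) one uses the weak$^*$-density of $CB(F,G)$ in $CB(F,G^{**})$ and the $1$-local reflexivity of $G$ to guarantee that the supremum over completely contractive $u:F\to G$ already attains the full projective norm; under (a), reflexivity of $G$ gives $CB(F,G)=CB(F,G^{**})=(F\hten G^*)^*$ outright, whose unit ball is exactly the completely contractive maps, so the density step drops out and $F$ need not be CCAP. The remaining, routine but essential, bookkeeping is to keep the three dual pairings $(E\hten G)^*$, $(G^*\hten G)^*$ and $(F\hten G^*)^*$ straight, to confirm that in each reinterpretation the correct identity map ($\id_{G^*}$, resp.\ $\id_G$) is the test functional, and to record the flip isometry $G^*\hten F\cong F\hten G^*$ used in part i).
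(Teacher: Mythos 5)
Your proposal is correct and follows essentially the same route as the paper's proof: the same convex sets $C(\rho)$ and $C(T)$, the same Hahn--Banach separations in the dualities $(E\hten G)^*=CB(E,G^*)$ and $(E\hten G^*)^*=CB(E,G^{**})$, and the same appeal to Lemma \ref{wdense} (with its reflexive degeneration under (a)) to convert the supremum over completely contractive maps into the projective norm. The extra details you supply --- the flip isometry $G^*\hten F\cong F\hten G^*$, the perturbation to an algebraic tensor in part ii), and the explicit reason condition (a) dispenses with CCAP --- are all consistent with what the paper leaves implicit.
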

\begin{proof}i) The ``only if'' direction is easy. For the if part, consider the norm-closed convex set
\[C(\rho)=\overline{\{\operatorname{id}\ten u(\rho)| u :F\to G, CC \}}\subset E\hten G.\]
If $\si \notin C$, there exists a $v\in CB(E,G^*)=(E\hten G)^*$ such that
\[\Re \pl \lan v ,\si \ran >\sup_u \Re\pl \lan v, \operatorname{id}\ten u(\rho)\ran \pl.\]
Let $\iota_G: G\to G^{**}$ be the embedding. Note that
\begin{align*}  &\Re\pl \lan v, \si \ran=\Re\pl \lan \iota_G ,v\ten \operatorname{id} (\si) \ran\le \norm{v\ten \operatorname{id} (\si) }{G^*\ten G}\pl,\\
& \sup_u \Re\pl \lan v, \operatorname{id}\ten u(\rho)\ran = \sup_{u} \Re\pl \lan u, v\ten \operatorname{id}(\rho)\ran =\norm{v\ten \operatorname{id}(\rho)}{G^*\hten F}
\end{align*}
where the last equality follows Lemma \ref{wdense}.

ii) Suppose $u_\al$ is a net of CC maps  such that $u_\al\circ T\to S$ in the point-weak topology. Then for any $R\in CB(G^*, G^*)=(G\hten G^*)^*$ and $x\in E\hten G^*$
\[\lim_\al \lan R, u_\al\circ T\ten \operatorname{id}(x)\ran=\lim_\al \lan u_\al\circ T, \operatorname{id}\ten R(x)\ran=\lan S, \operatorname{id}\ten R(x)\ran=\lan R, S\ten \operatorname{id}(x)\ran\pl \]
which implies $\norm{T\ten \operatorname{id}(x)}{F\hten G^*}\ge \norm{S\ten \operatorname{id}(x)}{G\hten G^*}$. For the converse,
consider the $w^*$-closure of convex set
\[C(T)=\overline{\{u\circ T| u :F\to G, CC \}^{w}}\subset CB(E,G^{**})=(E\hten G^{*})^*.\]
If $S \notin C$, there exists a $\rho \in E\hten G^*$ such that
\[\Re \pl \lan S, \rho \ran > \sup_u \Re \pl \lan u\circ T, \rho\ran\pl.\]
By a density argument, we can further assume $\rho\in E\ten G^*$ in the algebraic tensor product. Note that
\begin{align*}  & \Re\pl \lan S, \rho \ran=\Re\pl \lan \iota ,S\ten \operatorname{id} (\si) \ran\le \norm{S\ten \operatorname{id} (\si) }{G\ten G^*}\pl,\\
& \sup_u \Re\pl \lan u\circ T, \rho\ran =\sup_u \Re\pl \lan u, T\ten \operatorname{id}(\rho)\ran= \norm{\rho}{F\hten G^*}
\end{align*}
where again the last equality uses Lemma \ref{wdense}.
\end{proof}

The following proposition discusses the case when the limits in above theorem can be replaced by equality.
\begin{prop}Let $E,F,G$ be operator spaces. Let $T\in CB(E,F)$ and $\rho\in E\hten F$. Suppose $G=(G_*)^*$ is a dual space. Then
  $\{u\circ T\pl | \pl  u:F\to G,\pl CC \}$ is $w^*$-closed in $CB(E,G)$.
If, in addition, $E$ is CCAP or $G$ is reflexive,   $\{\operatorname{id}\ten u(\rho)\pl |\pl  u:F\to G,\pl CC \}$ is norm-closed in $E\hten G$.
\end{prop}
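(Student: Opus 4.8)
The plan is to treat the two assertions separately, both exploiting that $G=(G_*)^*$ makes the completely contractive (CC) ball $B=\{u:F\to G\pl|\pl \norm{u}{cb}\le 1\}$ weak$^*$-compact in $CB(F,G)=(F\hten G_*)^*$ by Banach--Alaoglu. For the $w^*$-closedness of $\{u\circ T\pl|\pl u\pl\text{CC}\}$, the key observation I would use is that left composition $\Lambda:u\mapsto u\circ T$ is the Banach-space adjoint of $T\ten\id_{G_*}:E\hten G_*\to F\hten G_*$; indeed $\lan u\circ T,x\ten g_*\ran=\lan u,Tx\ten g_*\ran$ for $x\in E$, $g_*\in G_*$. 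Hence $\Lambda$ is $w^*$-$w^*$ continuous from $CB(F,G)$ into $CB(E,G)=(E\hten G_*)^*$, so $\{u\circ T\}=\Lambda(B)$ is the continuous image of a $w^*$-compact set, hence $w^*$-compact and $w^*$-closed. This direction needs only that $G$ be a dual space.

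For norm-closedness of $C_\rho=\{\id\ten u(\rho)\pl|\pl u\pl\text{CC}\}=\Theta(B)$, where $\Theta(u)=\id\ten u(\rho)$, I would first dispose of the reflexive case. When $G$ is reflexive, $G^*=G_*$ under the canonical identification, so for $w\in(E\hten G)^*=CB(E,G^*)$ and a finite tensor $\rho=\sum_i x_i\ten f_i$ the functional $u\mapsto\lan w,\Theta(u)\ran=\sum_i\lan w(x_i),u(f_i)\ran$ is $w^*$-continuous on $B$, because each $w(x_i)\in G^*=G_*$. For general $\rho$ I would approximate it in $E\hten F$ by finite tensors $\rho_n$; since $\sup_{u\in B}\norm{\Theta(u)-\id\ten u(\rho_n)}{E\hten G}\le\norm{\rho-\rho_n}{E\hten F}\to 0$, the functional is a uniform limit on $B$ of $w^*$-continuous ones, hence $w^*$-continuous. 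Thus $\Theta|_B$ is $w^*$-to-weak continuous, $C_\rho=\Theta(B)$ is weakly compact, hence weakly closed and therefore norm-closed.

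The $E$-CCAP case is the main obstacle, because here $G^*\neq G_*$ and the previous argument breaks: a functional in $G^*$ that is not $w^*$-continuous need not respect the convergence $u_\beta(f)\to u(f)$ in $G$, so $\Theta$ is no longer $w^*$-to-weak continuous. Instead I would argue directly on sequences, which suffices since the norm topology is metric. Let $\Phi_\gamma:E\to E$ be a net of finite-rank CC maps with $\Phi_\gamma\to\id_E$ pointwise in norm; by uniform boundedness and density of finite tensors, $\Phi_\gamma\ten\id\to\id$ pointwise on both $E\hten F$ and $E\hten G$. Given $\eta_n=\id\ten u_n(\rho)\to\eta$ in norm, fix $\gamma$ and write the finite tensor $\rho_\gamma=\Phi_\gamma\ten\id(\rho)=\sum_k e_k^\gamma\ten g_k^\gamma$ with $\{e_k^\gamma\}$ a basis of the finite-dimensional range $E_\gamma$. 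Applying $\Phi_\gamma\ten\id_G$ to $\eta_n\to\eta$ and using $\Phi_\gamma\ten u_n=(\id\ten u_n)(\Phi_\gamma\ten\id)$ gives $\sum_k e_k^\gamma\ten u_n(g_k^\gamma)\to\Phi_\gamma\ten\id(\eta)=:\sum_k e_k^\gamma\ten\zeta_k^\gamma$ in $E_\gamma\hten G$.

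The crucial step is then that, since $E_\gamma$ is finite-dimensional, applying the bounded coordinate maps $(e_k^\gamma)^*\ten\id_G:E_\gamma\hten G\to G$ upgrades this to \emph{norm} convergence $u_n(g_k^\gamma)\to\zeta_k^\gamma$ in $G$ for each $k$. Taking a $w^*$-cluster point $u\in B$ of $(u_n)$, along the defining subnet $u(g_k^\gamma)$ is the $w^*$-limit of $u_n(g_k^\gamma)$, which must agree with the norm-limit $\zeta_k^\gamma$; hence $\id\ten u(\rho_\gamma)=\Phi_\gamma\ten\id(\eta)$ for every $\gamma$. Letting $\gamma\to\infty$, the left side converges to $\id\ten u(\rho)$ in norm (as $u$ is CC and $\rho_\gamma\to\rho$) while the right side converges to $\eta$, yielding $\eta=\id\ten u(\rho)\in C_\rho$. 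The delicate points I expect to have to justify carefully are the finite-rank reduction $\Phi_\gamma\ten\id\to\id$ on the projective tensor products and the passage from $w^*$-cluster-point values to the identification $u(g_k^\gamma)=\zeta_k^\gamma$; everything else is routine.
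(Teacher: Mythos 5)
Your proof is correct, and while it shares the paper's central mechanism --- weak$^*$-compactness of the completely contractive ball of $CB(F,G)=(F\hten G_*)^*$ via Banach--Alaoglu --- the execution differs at two points worth noting. For the first assertion the paper extracts a weak$^*$-convergent subnet from a net $u_k\circ T\to S$ and checks the limit pointwise; your observation that $u\mapsto u\circ T$ is the adjoint of $T\ten\operatorname{id}_{G_*}$, hence weak$^*$-to-weak$^*$ continuous, so that the set is the continuous image of a compact set, is cleaner and in fact yields the stronger conclusion that the set is weak$^*$-compact (it also quietly repairs the paper's use of sequences and ``subsequences'' where nets and subnets are needed). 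For the second assertion the paper again extracts a weak$^*$-convergent subnet $u_{k_i}\to u$ and shows $\operatorname{id}\ten u_{k_i}(\rho)\to\operatorname{id}\ten u(\rho)$ in the topology on $E\hten G$ induced by $CB(E,G_*)\subset CB(E,G^*)$, invoking Lemma \ref{wdense} to see that this topology is separating (this is precisely where the CCAP of $E$, or reflexivity of $G$, enters); your reflexive case is a continuity reformulation of the same idea, but your CCAP case replaces the appeal to Lemma \ref{wdense} by a hands-on finite-rank reduction: push everything into $E_\gamma\hten G$ with $E_\gamma=\Phi_\gamma(E)$ finite-dimensional, upgrade to norm convergence of the coordinates $u_n(g_k^\gamma)$ via the bounded maps $(e_k^\gamma)^*\ten\operatorname{id}_G$, identify them with the values of a weak$^*$-cluster point $u$, and pass to the limit in $\gamma$. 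Both routes are valid; the paper's is shorter because it reuses Lemma \ref{wdense}, while yours is self-contained and makes explicit where the finite-rank approximation of $\operatorname{id}_E$ is actually used. The two ``delicate points'' you flag are indeed the ones to write out, and both are standard: $\Phi_\gamma\ten\operatorname{id}\to\operatorname{id}$ pointwise on the projective tensor products follows from uniform boundedness plus convergence on elementary tensors, and the identification $u(g_k^\gamma)=\zeta_k^\gamma$ follows because the norm limit and the $\sigma(G,G_*)$-limit of $u_n(g_k^\gamma)$ along the chosen subnet must coincide in the Hausdorff weak$^*$ topology.
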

\begin{proof} To prove the first statement, let $u_k:F\to G$ be a sequence of CC maps such that $\lim_k u_k\circ T=S$ in the $w^*$-topology of $CB(E,G)=(E\hten G_*)^*$. Because $CB(F,G)=(F\hten G_*)^*$, we choose $u$ as $w^*$-limit of $(u_k)$ such that the subsequence $u_{k_i}\to u$. Then $u_{k_i}\circ T\to u\circ T$ in the point $w^*$-topology hence $S=T$. For the second statement, we assume $E$ is CCAP or $G$ is reflexive. Let $u_k:F\to G$ be a sequence of CC such that $ \operatorname{id}\ten u_k(\rho)\to \si$ in the norm of $E\hat{\ten} G$. Choose a subsequence $u_{k_i}\to u$ in the $w^*$-topology for some CC $u$. For for any $T\in CB(E,G_*)$,
\[\lim_{i}\lan T, \operatorname{id}\ten u_{k_i}(\rho)\ran= \lim_{i}\lan u_{k_i}, T\ten \operatorname{id}(\rho)\ran=\lan u, T\ten \operatorname{id}(\rho)\ran= \lan T, u\ten \operatorname{id}(\rho) \ran.\]
Thus $\operatorname{id}\ten u_{k_i}(\rho)\to \operatorname{id}\ten u(\rho)$ in $E\hten G$ with the topology induced by $CB(E,G_*)\subset CB(E,G^*)$. Note that by Lemma \ref{wdense}, this topology is separating. Hence we have $\si=\lim_{i} \operatorname{id}\ten u_{k_i}(\rho)= \operatorname{id}\ten u(\rho)$.
\end{proof}

Theorem \ref{normsep} also holds for Banach space tensor products. We can replace the operator space  concepts with their Banach space counterparts: replace
``operator spaces'' by ``Banach spaces'', ``CB (resp. CC)'' by ``bounded (resp. contractive)'' and ``CCAP'' by ``contractive approximation property (or $1$-AP)''. Moreover, all Banach spaces have $1$-local reflexivity. We refer to the book \cite{lindenstrauss96} for definitions of the above mentioned Banach space concepts. Here we state the result analogous to Theorem \ref{normsep}. Let $\ten_\pi$ denote the Banach space projective tensor product and $B(E,F)$ be the set of bounded maps from Banach space $E$ to $F$.
\begin{theorem}Let $E,F,G$ be Banach spaces. Suppose one of the following conditions holds: \begin{enumerate}
\item[a)]$G$ is reflexive;
\item[b)] $F$ is nuclear.
\end{enumerate}
Then the following two statements hold:
\begin{enumerate}
\item[i)]for $\rho\in E\ten_{\pi}F$ and $\si\in E\ten_{\pi} G$, there exists a sequence of contraction map $u_n:F\to G$ such that $\operatorname{id}\ten u_n(\rho)\to \si$ in the norm of $E\ten_\pi G$ if and only if for any bounded map $v:E\to G^* $,
    \[\norm{v\ten \operatorname{id}(\rho)}{G^*\ten_{\pi}F}\ge \norm{v\ten \operatorname{id}(\si)}{G^*\ten_{\pi}G}\pl. \]
\item[ii)]for $T\in B(E,F)$ and $S\in B(E,G)$, there exists a net of contraction $u_\al:F\to G$ such that $u_\al\circ T\to S$ in the point-weak topology if and only if for any $x\in E\ten_\pi G^* $, \[\norm{T\ten \operatorname{id}(x)}{F\ten_\pi G^*}\ge \norm{S\ten \operatorname{id}(x)}{G\ten_\pi G^*}\pl.\]
    \end{enumerate}
\end{theorem}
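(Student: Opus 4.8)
The plan is to transcribe the proof of Theorem \ref{normsep} into the Banach setting, replacing completely bounded/completely contractive maps by bounded/contractive maps, the operator space projective tensor product by $\ten_{\pi}$, and CCAP by the metric approximation property ($1$-AP, here supplied by the nuclearity hypothesis (b)); the only structural input that changes is the source of the reflexivity fact. The backbone is the Banach analogue of Lemma \ref{wdense}: if $F$ has the metric approximation property, then for every $\rho\in F\ten_{\pi} G^*$,
\[
\norm{\rho}{F\ten_{\pi} G^*}=\sup\{\Re\pl \lan \Psi,\rho\ran \pl|\pl \Psi:F\to G\pl \text{ contractive}\}\pl.
\]
First I would set up the standard dualities $(E\ten_{\pi} G)^*\cong B(E,G^*)$ and $(F\ten_{\pi} G^*)^*\cong B(F,G^{**})$, under which the supremum above is a priori over contractions into $G^{**}$. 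To replace $G^{**}$ by $G$ I would run the argument of Lemma \ref{wdense} verbatim: approximate $\rho$ by a finite tensor $\rho_0=\sum_j x_j\ten y_j$, insert a finite-rank contraction $\Phi_\al:F\to F$ from the $1$-AP so that $\Phi_\al\ten\id(\rho_0)\to\rho_0$, and compress a contraction $T:F\to G^{**}$ to its restriction on the finite-dimensional range of $\Phi_\al$.

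The key replacement is that the passage from $B(F_\al,G^{**})$ to $B(F_\al,G)$ (used via $1$-local reflexivity of $G$ in Lemma \ref{wdense}) is now free: every Banach space is $1$-locally reflexive, as noted above, so on a finite-dimensional subspace $F_\al\subset F$ any contraction $F_\al\to G^{**}$ is approximated weakly, without increasing its norm, by contractions $F_\al\to G$. Composing with $\Phi_\al$ yields a genuine contraction $F\to G$ realizing the supremum up to $\eps$. Under hypothesis (a), where $G$ is reflexive, this step is unnecessary: $G=G^{**}$ gives $B(F,G)=B(F,G^{**})$ directly, and the supremum is attained with no approximation property on $F$.

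With this lemma in hand, both equivalences follow by Hahn-Banach separation exactly as in Theorem \ref{normsep}. For (i) the ``only if'' direction is immediate, since $\id_{G^*}\ten u_n$ is contractive on the projective tensor product; for ``if'' I would form the norm-closed convex set $C(\rho)=\overline{\{\id\ten u(\rho)\pl|\pl u:F\to G\pl \text{ contractive}\}}\subset E\ten_{\pi} G$, and if $\si\notin C(\rho)$ separate it by some $v\in B(E,G^*)=(E\ten_{\pi} G)^*$. Reinterpreting through the canonical isometric embedding $\iota_G:G\to G^{**}$ gives $\Re\pl\lan v,\si\ran\le \norm{v\ten\id(\si)}{G^*\ten_{\pi} G}$, while the lemma identifies $\sup_u\Re\pl\lan v,\id\ten u(\rho)\ran=\norm{v\ten\id(\rho)}{G^*\ten_{\pi} F}$, contradicting the hypothesis. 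For (ii) I would dualize: form the $w^*$-closed convex hull $C(T)=\overline{\{u\circ T\pl|\pl u\pl \text{ contractive}\}^{w}}\subset B(E,G^{**})=(E\ten_{\pi} G^*)^*$, separate $S\notin C(T)$ by an element $\rho\in E\ten_{\pi} G^*$ (reduced to an algebraic tensor by density), and reinterpret as $\Re\pl\lan S,\rho\ran\le\norm{S\ten\id(\rho)}{G\ten_{\pi} G^*}$ together with $\sup_u\Re\pl\lan u\circ T,\rho\ran=\norm{T\ten\id(\rho)}{F\ten_{\pi} G^*}$, again by the lemma.

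The main obstacle is the Banach analogue of Lemma \ref{wdense}. Since $1$-local reflexivity is handed to us for all Banach spaces and the $1$-AP is assumed, the content is essentially a faithful transcription; the one point requiring care is performing the finite-rank approximation coming from the $1$-AP of $F$ and the local-reflexivity perturbation of $T$ \emph{simultaneously}, so that the resulting map $F\to G$ remains contractive while reproducing the pairing up to $\eps$. Once this norm-attaining identity is secured, the separation arguments are purely formal and carry over without change, and I expect no further difficulty in the Hahn-Banach steps themselves.
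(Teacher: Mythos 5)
Your proposal is correct and follows essentially the same route as the paper, which itself states only that ``the proof is identical to Theorem \ref{normsep}'' with the dictionary you describe (bounded/contractive for CB/CC, $1$-AP for CCAP, and the principle of local reflexivity available for all Banach spaces). The details you supply --- the Banach analogue of Lemma \ref{wdense} via simultaneous finite-rank approximation and local-reflexivity perturbation, followed by the two Hahn--Banach separations --- are exactly the intended transcription.
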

The proof is identical to Theorem \ref{normsep} and the details are left to the reader.
\vspace{20pt}

{\bf Acknowledgements---}
The authors are grateful to Marius Junge for many helpful discussions, and to Eric Ricard for remarks on the assumption on Theorem \ref{von}. The authors would also like to thank Sasmita Patnaik for useful discussions at the beginning of this project. The authors acknowledge that the approximate case in Section \ref{approximate case} was inspired by a talk given by Anna Jen{\v{c}}ov{\'a} at the Algebraic and Statistical ways into Quantum Resource Theories workshop at BIRS in July 2019.
Satish K.\ Pandey is supported in part at the Technion by a fellowship of the Israel Council for Higher Education. Sarah Plosker is supported by NSERC Discovery Grant number 1174582, the Canada Foundation for Innovation (CFI) grant number 35711, and the Canada Research Chairs (CRC) Program grant number 231250.

%\nocite{*}
\bibliographystyle{amsalpha}
\bibliography{qmajorization}

%\bibliography{qmajorization}
%\bibliographystyle{plain}

\end{document}